\documentclass{article}

\usepackage[T2A, T1]{fontenc}
\usepackage[utf8]{inputenc}
\usepackage[english]{babel}

\usepackage[letterpaper,top=2cm,bottom=2cm,left=3cm,right=3cm,marginparwidth=1.75cm]{geometry}

\usepackage{amsmath, amsfonts, amssymb}
\usepackage{graphicx}
\usepackage{enumitem}
\usepackage[dvipsnames]{xcolor}
\usepackage{tikz,tikz-cd}
\usepackage{pgfkeys}
\usepackage[colorlinks=true, allcolors=blue]{hyperref}
\usepackage{adjustbox}

\usetikzlibrary{calc}

\usepackage{amsthm}

\usepackage{lmodern}
\DeclareMathSizes{5}{5}{3.5}{2.5}
\setlist{nosep}

\DeclareTextSymbolDefault{\CYRD}{T2A}
\newcommand{\Z}{\mathbb{Z}}
\newcommand{\N}{\mathbb{N}}
\newcommand{\De}{\text{\CYRD}}
\newcommand{\sln}{\mathfrak{sl}(n)}
\newcommand{\mulg}{\mathfrak{gr}}
\newcommand{\id}{\mathrm{id}}
\newcommand{\Cobthree}{\mathsf{Cob}^3}
\newcommand{\Cobquot}{\mathsf{Cob}_{\ell}}
\newcommand{\deloop}{\mathbf{Del}}
\newcommand{\bnbrack}[1]{{[\![ #1 ]\!]}}
\newcommand{\capdot}{\mathsf{D}}
\newcommand{\cupdot}{\rotatebox[origin=c]{180}{$\mathsf{D}$}}
\newcommand{\figeq}[1]{\vcenter{\hbox{\includegraphics[scale=0.25]{figures/#1}}}}

\DeclareMathOperator{\homom}{hom}
\DeclareMathOperator{\End}{End}
\DeclareMathOperator{\Res}{Res}
\DeclareMathOperator{\Quo}{Quo}
\DeclareMathOperator{\ev}{ev}

\NewDocumentCommand{\sdr}{ m m m m m O{large} }{%
	\begin{tikzcd}[column sep=#6,ampersand replacement=\&]
		{#1} \arrow[r, "{#3}", shift left=0.5ex,] \arrow[r, leftarrow, "{#4}", shift right=0.5ex, swap] \&
		#2 \arrow[loop right, distance=1em, in=15, out=-15, "{#5}",swap]
	\end{tikzcd}%
}

\NewDocumentCommand{\vsdr}{ m m m m m O{large} }{%
	\begin{tikzcd}[column sep=#6, ampersand replacement=\&]
		{#1}
		\arrow[r, "{#3}", shift left=0.5ex, color=violet]
		\arrow[r, leftarrow, "{#4}", shift right=0.5ex, swap, color=violet]
		\& #2 \arrow[loop right, distance=1em, in=15, out=-15, "{#5}",swap, color=violet]
	\end{tikzcd}%
}

\NewDocumentCommand{\gsdr}{ m m m m m O{large} }{%
	\begin{tikzcd}[column sep=#6, ampersand replacement=\&]
		{#1}
		\arrow[r, "{#3}", shift left=0.5ex, color=ForestGreen]
		\arrow[r, leftarrow, "{#4}", shift right=0.5ex, swap, color=ForestGreen]
		\& #2 \arrow[loop right, distance=1em, in=15, out=-15, "{#5}",swap, color=ForestGreen]
	\end{tikzcd}%
}

\newcommand{\mfshort}[4]{%
	\begin{tikzcd}[ampersand replacement=\&]
		{#1} \arrow[r, "#3", bend left=5,start anchor=east, end anchor=west, shift left=2pt] \&
		{#2} \arrow[l,"#4", bend left=5, start anchor=west, end anchor=east, shift left=2pt]
	\end{tikzcd}%
}

\newcommand{\kmf}[2]{
	K_{#1}{\begin{pmatrix}
			#2
		\end{pmatrix}
	}
}

\newcommand{\skmf}[2]{
	K_{#1}{\left(\begin{smallmatrix}
			#2
		\end{smallmatrix}\right)
	}
}

\NewDocumentCommand{\ThickEdgeAndCircle}{m m m O{} O{} O{}}{
	\begin{tikzpicture}[thick,baseline=(current bounding box.center)]
		\draw[dotted,thick] (0,0) circle (0.8cm);

		\draw[line width=2.5pt] (-0.2,-0.2) -- (-0.2,0.2);

		\draw[line width=1pt,]
			(-0.5,-0.65) .. controls (-0.4,-0.55) and (-0.25,-0.35) .. (-0.225,-0.18);
		\draw[line width=1pt, -Stealth]
			(-0.225,0.18) .. controls (-0.25,0.35) and (-0.4,0.55) .. (-0.5,0.65);
		\draw[line width=1pt] ([shift=(-145:0.32cm)]0.07,0) arc (-145:145:0.32cm);

		\node at (0,-0.55) {$#1$};
		\node at (-0.5,0) {$#2$};
		\node at (0.1,0) {$#3$};

		\node[anchor=center] at (0.7,0.7) {#4};
		\node[anchor=center] at (0.7,-0.7) {#5};
		\node[anchor=center] at (-0.7,-0.7) {#6};
	\end{tikzpicture}
}

\NewDocumentCommand{\RER}{m m O{} O{} O{} O{0}}{
	\begin{tikzpicture}[baseline=(current bounding box.center)]
		\coordinate (O) at (0,0);
		\coordinate (A) at (0.2,0.5);
		\coordinate (C) at (0.2,-0.5);

		\draw[dotted,thick] (O) circle (1cm);

		\draw[thick]
			($(C)+(45:0.3)$)
			.. controls +({45}:0.2) and +(-45:0.2)
			.. ($(A)+(-45:0.3)$);

		\draw[thick]
			($(C)+(-45:0.3)$)
			.. controls +({-45}:0.1) and +(110:0.1)
			.. ($(O)+(-60:1)$);

		\draw[thick]
			($(C)+(135:0.3)$)
			.. controls +({135}:0.4) and +(45:0.4)
			.. ($(O)+(-150:1)$);

		\draw[thick]
			($(C)+(-135:0.3)$)
			.. controls +({-135}:0.1) and +(90:0.1)
			.. ($(O)+(260:1)$);

		\draw[thick]
			($(A)+(-135:0.3)$)
			.. controls +({-135}:0.4) and +(-45:0.4)
			.. ($(O)+(150:1)$);

		\draw[thick]
			($(A)+(45:0.3)$)
			.. controls +({45}:0.1) and +(-110:0.1)
			.. ($(O)+(60:1)$);

		\draw[thick]
			($(A)+(135:0.3)$)
			.. controls +({135}:0.1) and +(-90:0.1)
			.. ($(O)+(100:1)$);

		\begin{scope}[shift={(A)}]
			\csname thi#1e\endcsname{0.3}
		\end{scope}
		\begin{scope}[shift={(C)}]
			\csname thi#2e\endcsname{0.3}
		\end{scope}

		\node[anchor=center] at (0.85,0.85) {#3};
		\node[anchor=center] at (0.85,-0.85) {#4};
		\node[anchor=center] at (-0.85,-0.85) {#5};

		\ifnum #6=1
		\node[anchor=center] at (0.2,0) {$x_1$};
		\node[anchor=center] at (-0.35,-0.5) {$x_2$};
		\node[anchor=center] at (-0.45,0.55) {$x_0$};
		\node[anchor=center] at (0.2,-0.8) {$x_3$};
		\node[anchor=center] at (0.75,0) {$x_4$};
		\node[anchor=center] at (0.2,0.81) {$x_5$};
		\fi

		\ifnum #6=2
		\node[anchor=center] at (0.2,0) {$u$};
		\node[anchor=center] at (-0.35,-0.5) {$x_2$};
		\node[anchor=center] at (-0.45,0.55) {$x_0$};
		\node[anchor=center] at (0.2,-0.8) {$x_3$};
		\node[anchor=center] at (0.75,0) {$x_4$};
		\node[anchor=center] at (0.2,0.81) {$x_5$};
		\fi
	\end{tikzpicture}
}

\pgfkeys{
	/diagramarcs/.is family, /diagramarcs,
	q/.estore in = \qdeg,
	h/.estore in = \hdeg,
	hslash/.estore in = \hslashdeg,
	dir1/.estore in = \arconedir, lab1/.store in = \arconelabel, sha1/.estore in =\arconeshade,
	dir2/.estore in = \arctwodir, lab2/.store in = \arctwolabel, sha2/.estore in =\arctwoshade,
	dir3/.estore in = \arcthreedir, lab3/.store in = \arcthreelabel, sha3/.estore in =\arcthreeshade,
	extra/.store in = \extralines,
	cr1/.estore in = \iscrossingone, cr2/.estore in = \iscrossingtwo, cr3/.estore in = \iscrossingthree,
	emp1/.estore in = \isemptyone, emp2/.estore in = \isemptytwo, emp3/.estore in = \isemptythree,
	thin1/.estore in = \isemptyone, thin2/.estore in = \isemptytwo, thin3/.estore in = \isemptythree,
	thick1/.estore in = \isemptyone, thick2/.estore in = \isemptytwo, thick3/.estore in = \isemptythree,
	q={},
	h={},
	hslash={},
	hslash/.default={$\hslash$},
	emp1=0, emp1/.default=2, thin1/.default=-3, thick1/.default=-4,
	emp2=0, emp2/.default=2, thin2/.default=-3, thick2/.default=-4,
	emp3=0, emp3/.default=2, thin3/.default=-3, thick3/.default=-4,
	cr1=0, cr1/.default=1,
	cr2=0, cr2/.default=1,
	cr3=0, cr3/.default=1,
	dir1=0, lab1= {}, sha1=0,
	dir2=0, lab2= {}, sha2=0,
	dir3=0, lab3= {}, sha3=0,
	extra={},
}

\NewDocumentCommand{\thinput}{ m O{0} O{0} O{a} O{0}}{
	\begin{scope}[rotate=#3]

		\pgfmathsetmacro{\AAA}{int(abs((1-(#5)*(#5))*(100-(#5)*(#5))))}
		\pgfmathsetmacro{\BBB}{int(mod((#3)/90,2))}
		\pgfmathsetmacro{\CCC}{int(mod((#3)/90,4))}

		\ifnum \AAA>0
		\ifnum #2=0
		\ifnum #5>-3
		\fill[gray!15]
			($(135:#1)$)
			.. controls +(-45:#1*0.7) and +(45:#1*0.7) .. ($(-135:#1)$)
			arc (-135:-225:#1);

		\fill[gray!15]
			($(-45:#1)$)
			.. controls +(135:#1*0.7) and +(-135:#1*0.7) .. ($(45:#1)$)
			arc (45:-45:#1);
		\fi
		\fi

		\ifnum #2=1
		\ifnum #5>-3
		\fill[gray!15]
			($(135:#1)$)
			.. controls +(-45:#1*0.7) and +(45:#1*0.7) .. ($(-135:#1)$)
			arc (-135:-45:#1)
			.. controls +(135:#1*0.7) and +(-135:#1*0.7) .. ($(45:#1)$)
			arc (45:135:#1);
		\fi
		\fi

		\ifnum #5=0
		\draw[dotted, dash pattern=on 1pt off 1pt, -{>[scale=0.8]}]
			($(-180:#1/3)$)
			.. controls +({0}:0.2) and +(180:0.2)
			.. ($(0:#1/3)$);
		\fi

		\ifnum #5=-4
		\draw[ultra thick]
			($(-180:#1/3)$)
			.. controls +({0}:0.18) and +(180:0.2)
			.. ($(0:#1/3.2)$);
		\fi

		\ifnum #5=-3
		\draw[dotted, dash pattern=on 1pt off 1pt]
			($(-180:#1/3)$)
			.. controls +({0}:0.2) and +(180:0.2)
			.. ($(0:#1/3.2)$);
		\fi

		\draw[thick, {\ifnum #5<-2 \ifnum #5>-4 Stealth-{} \fi\fi}]
			($(135:#1)$)
			.. controls +(-45:#1*0.7) and +(45:#1*0.7)
			.. ($(-135:#1)$);

		\draw[thick, {\ifnum #5<-2 {\ifnum #5<-3 Stealth \fi}-Stealth \fi}]
			($(-45:#1)$)
			.. controls +(135:#1*0.7) and +(-135:#1*0.7)
			.. ($(45:#1)$);

		\ifnum \BBB=0
		\node[anchor=center] at (0,#1/1.5) {#4};
		\fi

		\ifnum \CCC=1
		\node[anchor=base] at (-#1/6,#1/1.5) {#4};
		\fi

		\ifnum \CCC=3
		\node[anchor=base] at (#1/6,#1/1.5) {#4};
		\fi

		\fi

		\ifnum \AAA=0
		\ifnum \numexpr #5*#5 = 1
		\begin{scope}[rotate=90*(1-#2)]
			\fill[gray!15] (0,0) -- (45:#1) arc (45:135:#1) -- cycle;
			\fill[gray!15] (0,0) -- (-135:#1) arc (-135:-45:#1) -- cycle;
		\end{scope}
		\fi

		\ifnum #5<0

		\draw[thick, stealth-]
			($(-45:#1)$) -- ($(135:#1)$);

		\draw[line width=0.91pt, color=white]
			($(-45:0.2*#1)$) -- ($(135:0.2*#1)$);

		\draw[thick, -stealth]
			($(-135:#1)$) -- ($(45:#1)$);

		\fi

		\ifnum #5>0
		\draw[thick, -stealth]
			($(-135:#1)$) -- ($(45:#1)$);

		\draw[thick, line width=0.91pt, color=white]
			($(-135:0.2*#1)$) -- ($(45:0.2*#1)$);

		\draw[thick, stealth-]
			($(-45:#1)$) -- ($(135:#1)$);

		\fi
		\fi

	\end{scope}
}

\newcommand{\Elementary}[1][]{
	\begin{tikzpicture}[thick,baseline=(current bounding box.center)]
		\pgfkeys{/diagramarcs, #1}

		\pgfmathsetmacro{\Y}{int(mod(\arconedir+\arconeshade,2))}

		\thinput{0.8}[\Y][90*\arconedir][\arconelabel][\the\numexpr \iscrossingone + \isemptyone \relax]

		\node[anchor=center] at (0.7,0.7) {\qdeg};
		\node[anchor=center] at (0.7,-0.7) {\hdeg};
		\node[anchor=center] at (-0.7,-0.7) {\hslashdeg};

		\draw[dotted,thick] (0,0) circle (0.8cm);

		\extralines
	\end{tikzpicture}
}

\newcommand{\SingleEdge}[1][]{
	\begin{tikzpicture}[thick,baseline=(current bounding box.center)]
		\pgfkeys{/diagramarcs, #1}

		\begin{pgfinterruptboundingbox}
			\begin{scope}
				\clip (0,0) circle (0.8cm);

				\fill[gray!15]
					(110:0.8)
					.. controls +(-45:0.6) and +(45:0.6) .. (-110:0.8)
					-- (-1.5, -1.2) -- (-1.5, 1.2) -- cycle;
			\end{scope}
		\end{pgfinterruptboundingbox}

		\draw[dotted,thick] (0,0) circle (0.8cm);

		\draw[thick]
			(110:0.8)
			.. controls +(-45:0.6) and +(45:0.6)
			.. (-110:0.8);

		\node[anchor=center] at (0.7,0.7) {\qdeg};
		\node[anchor=center] at (0.7,-0.7) {\hdeg};
		\node[anchor=center] at (-0.7,-0.7) {\hslashdeg};

		\extralines
	\end{tikzpicture}
}

\newcommand{\ThinEdgeAndCircle}[1][]{
	\begin{tikzpicture}[baseline=(current bounding box.center)]
		\pgfkeys{/diagramarcs, #1}
		\begin{scope}
			\clip (0,0) circle (0.8cm);

			\fill[gray!15]
				(-0.5,0.65) .. controls (-0.4,0.5) and (135:0.6) .. (135:0.35)
				arc (135:225:0.35)
				.. controls (-135:0.6) and (-0.4,-0.5) .. (-0.5,-0.65)
				-- (-1, -1) -- (-1, 1) -- cycle;

			\fill[gray!15]
				($(45:0.35)$) .. controls +(45:0.6) and +(-45:0.6) .. ($(-45:0.35)$)
				arc (-45:45:0.35);
		\end{scope}

		\draw[dotted,thick] (0,0) circle (0.8cm);

		\draw[dotted, dash pattern=on 1pt off 1pt, line width=0.5pt] (-0.18,0) -- (0.0,0);

		\draw[thick]
			(-0.5,0.65) .. controls (-0.4,0.5) and (135:0.6) .. (135:0.35);
		\draw[thick]
			(-0.5,-0.65) .. controls (-0.4,-0.5) and (-135:0.6) .. (-135:0.35);
		\draw[thick]
			($(45:0.35)$)
			.. controls +(45:0.6) and +(-45:0.6)
			.. ($(-45:0.35)$);

		\pgfmathsetmacro{\Y}{int(mod(\arconedir+\arconeshade,2))}

		\thinput{0.35}[\Y][90*\arconedir][\arconelabel][\the\numexpr \iscrossingone + \isemptyone \relax]

		\node[anchor=center] at (0.7,0.7) {\qdeg};
		\node[anchor=center] at (0.7,-0.7) {\hdeg};
		\node[anchor=center] at (-0.7,-0.7) {\hslashdeg};

		\extralines
	\end{tikzpicture}
}

\newcommand{\Square}[1][]{
	\begin{tikzpicture}[baseline=(current bounding box.center)]
		\pgfkeys{/diagramarcs, #1}
		\fill[gray!15]
			($(-0.4,0)+(45:0.35)$)
			.. controls +(45:0.1) and +(135:0.1) .. ($(0.4,0)+(135:0.35)$)
			arc (135:225:0.35)
			.. controls +(-135:0.1) and +(-45:0.1) .. ($(-0.4,0)+(-45:0.35)$)
			arc (-45:45:0.35)
			-- cycle;

		\fill[gray!15]
			($(0,0)+(140:1)$)
			arc (140:220:1)
			.. controls +(50:0.1) and +(-135:0.1) .. ($(-0.4,0)+(-135:0.35)$)
			arc (225:135:0.35)
			.. controls +(135:0.1) and +(-50:0.1) .. ($(0,0)+(140:1)$)
			-- cycle;

		\fill[gray!15]
			($(0,0)+(-40:1)$)
			arc (-40:40:1)
			.. controls +(220:0.1) and +(45:0.1) .. ($(0.4,0)+(45:0.35)$)
			arc (45:-45:0.35)
			.. controls +(-45:0.1) and +(-220:0.1) .. ($(0,0)+(-40:1)$)
			-- cycle;

		\draw[dotted,thick] (0,0) circle (1cm);

		\draw[thick]
			($(-0.4,0)+(-45:0.35)$)
			.. controls +(-45:0.1) and +(-135:0.1)
			.. ($(0.4,0)+(-135:0.35)$);
		\draw[thick]
			($(-0.4,0)+(45:0.35)$)
			.. controls +(45:0.1) and +(135:0.1)
			.. ($(0.4,0)+(135:0.35)$);
		\draw[thick]
			($(-0.4,0)+(135:0.35)$)
			.. controls +(135:0.1) and +(-50:0.1)
			.. ($(0,0)+(140:1)$);
		\draw[thick]
			($(-0.4,0)+(-135:0.35)$)
			.. controls +(-135:0.1) and +(50:0.1)
			.. ($(0,0)+(220:1)$);

		\draw[thick]
			($(0.4,0)+(45:0.35)$)
			.. controls +(45:0.1) and +(220:0.1)
			.. ($(0,0)+(40:1)$);
		\draw[thick]
			($(0.4,0)+(-45:0.35)$)
			.. controls +(-45:0.1) and +(-220:0.1)
			.. ($(0,0)+(-40:1)$);

		\pgfmathsetmacro{\Y}{int(mod(\arconedir+\arconeshade,2))}
		\pgfmathsetmacro{\Z}{int(mod(\arctwodir+\arctwoshade,2))}

		\begin{scope}[shift={(-0.4,0)}]
			\thinput{0.35}[\Y][90*\arconedir][\arconelabel][\the\numexpr \iscrossingone + \isemptyone \relax]
		\end{scope}
		\begin{scope}[shift={(0.4,0)}]
			\thinput{0.35}[\Z][90*\arctwodir][\arctwolabel][\the\numexpr \iscrossingtwo + \isemptytwo \relax]
		\end{scope}

		\node[anchor=center] at (0.85,0.85) {\qdeg};
		\node[anchor=center] at (0.85,-0.85) {\hdeg};
		\node[anchor=center] at (-0.85,-0.85) {\hslashdeg};

		\extralines
	\end{tikzpicture}
}

\newcommand{\Bigon}[1][]{
	\begin{tikzpicture}[baseline=(current bounding box.center)]
		\pgfkeys{/diagramarcs, #1}
		\begin{pgfinterruptboundingbox}
			\begin{scope}
				\clip (0,0) circle (1cm);

				\fill[gray!15]
					($(0,0.4)+(135:0.35)$)
					.. controls +(135:0.1) and +(-50:0.1) .. ($(0,0)+(130:1)$)
					-- ($(0,0)+(130:1.5)$) arc (130:230:1.5) -- ($(0,0)+(-130:1)$)
					.. controls +(40:0.1) and +(-135:0.1) .. ($(0,-0.4)+(-135:0.35)$)
					arc (225:135:0.35)
					.. controls +(135:0.1) and +(-135:0.1) .. ($(0,0.4)+(-135:0.35)$)
					arc (225:135:0.35)
					-- cycle;

				\fill[gray!15]
					($(0,0.4)+(-45:0.35)$)
					.. controls +(-45:0.1) and +(45:0.1) .. ($(0,-0.4)+(45:0.35)$)
					arc (45:-45:0.35)
					.. controls +(-45:0.1) and +(140:0.1) .. ($(0,0)+(-50:1)$)
					-- ($(0,0)+(-50:1.5)$) arc (-50:50:1.5) -- ($(0,0)+(50:1)$)
					.. controls +(-130:0.1) and +(45:0.1) .. ($(0,0.4)+(45:0.35)$)
					arc (45:-45:0.35)
					-- cycle;
			\end{scope}
		\end{pgfinterruptboundingbox}

		\draw[dotted,thick] (0,0) circle (1cm);

		\draw[thick]
			($(0,-0.4)+(45:0.35)$)
			.. controls +(45:0.1) and +(-45:0.1)
			.. ($(0,0.4)+(-45:0.35)$);

		\draw[thick]
			($(0,-0.4)+(135:0.35)$)
			.. controls +(135:0.1) and +(-135:0.1)
			.. ($(0,0.4)+(-135:0.35)$);

		\draw[thick]
			($(0,-0.4)+(-135:0.35)$)
			.. controls +(-135:0.1) and +(40:0.1)
			.. ($(0,0)+(-130:1)$);

		\draw[thick]
			($(0,-0.4)+(-45:0.35)$)
			.. controls +(-45:0.1) and +(140:0.1)
			.. ($(0,0)+(-50:1)$);

		\draw[thick]
			($(0,0.4)+(135:0.35)$)
			.. controls +(135:0.1) and +(-50:0.1)
			.. ($(0,0)+(130:1)$);

		\draw[thick]
			($(0,0.4)+(45:0.35)$)
			.. controls +(45:0.1) and +(-130:0.1)
			.. ($(0,0)+(50:1)$);

		\pgfmathsetmacro{\Y}{int(mod(\arconedir+\arconeshade,2))}
		\pgfmathsetmacro{\Z}{int(mod(\arctwodir+\arctwoshade,2))}

		\begin{scope}[shift={(0,0.4)}]
			\thinput{0.35}[\Y][90*\arconedir][\arconelabel][\the\numexpr \iscrossingone + \isemptyone \relax]
		\end{scope}
		\begin{scope}[shift={(0,-0.4)}]
			\thinput{0.35}[\Z][90*\arctwodir][\arctwolabel][\the\numexpr \iscrossingtwo + \isemptytwo \relax]
		\end{scope}

		\node[anchor=center] at (0.85,0.85) {\qdeg};
		\node[anchor=center] at (0.85,-0.85) {\hdeg};
		\node[anchor=center] at (-0.85,-0.85) {\hslashdeg};

		\extralines
	\end{tikzpicture}
}

\newcommand{\RL}[1][]{
	\begin{tikzpicture}[baseline={(current bounding box.center)}]
		\pgfkeys{/diagramarcs,#1}

		\coordinate (O) at (0,0);
		\coordinate (A) at (0.3,0.3);
		\coordinate (B) at (-0.3,-0.3);

		\begin{scope}
			\clip (O) circle (1cm);

			\fill[gray!15]
				($(A)+(135:0.35)$)
				.. controls +({135}:0.1) and +(-90:0.1) .. ($(O)+(100:1)$)
				-- ($(O)+(100:1.5)$) -- ($(O)+(45:1.5)$) -- ($(O)+(45:1)$)
				.. controls +(-135:0.1) and +({45}:0.1) .. ($(A)+(45:0.35)$)
				arc (45:135:0.35) -- cycle;

			\fill[gray!15]
				($(B)+(-45:0.35)$)
				.. controls +(-45:0.1) and +(90:0.1) .. ($(O)+(-80:1)$)
				-- ($(O)+(-80:1.5)$) -- ($(O)+(-30:1.5)$) -- ($(O)+(-30:1)$)
				.. controls +({150}:0.2) and +(-45:0.2) .. ($(A)+(-45:0.35)$)
				arc (-45:-135:0.35)
				.. controls +(-135:0.1) and +({45}:0.1) .. ($(B)+(45:0.35)$)
				arc (45:-45:0.35) -- cycle;

			\fill[gray!15]
				($(B)+(135:0.35)$)
				.. controls +({135}:0.2) and +(-30:0.2) .. ($(O)+(150:1)$)
				-- ($(O)+(150:1.5)$) -- ($(O)+(-135:1.5)$) -- ($(O)+(-135:1)$)
				.. controls +(45:0.1) and +({-135}:0.1) .. ($(B)+(-135:0.35)$)
				arc (-135:-225:0.35) -- cycle;
		\end{scope}

		\draw[dotted,thick] (O) circle (1cm);

		\draw[thick]
			($(O)+(-30:1)$)
			.. controls +({150}:0.2) and +(-45:0.2)
			.. ($(A)+(-45:0.35)$);

		\draw[thick]
			($(O)+(-80:1)$)
			.. controls +(90:0.1) and +(-45:0.1)
			.. ($(B)+(-45:0.35)$);

		\draw[thick]
			($(B)+(135:0.35)$)
			.. controls +({135}:0.2) and +(-30:0.2)
			.. ($(O)+(150:1)$);

		\draw[thick]
			($(B)+(-135:0.35)$)
			.. controls +({-135}:0.1) and +(45:0.1)
			.. ($(O)+(-135:1)$);

		\draw[thick]
			($(B)+(45:0.35)$)
			.. controls +({45}:0.1) and +(-135:0.1)
			.. ($(A)+(-135:0.35)$);

		\draw[thick]
			($(A)+(45:0.35)$)
			.. controls +({45}:0.1) and +(-135:0.1)
			.. ($(O)+(45:1)$);

		\draw[thick]
			($(A)+(135:0.35)$)
			.. controls +({135}:0.1) and +(-90:0.1)
			.. ($(O)+(100:1)$);

		\pgfmathsetmacro{\Y}{int(1-mod(\arconedir+\arconeshade,2))}
		\pgfmathsetmacro{\Z}{int(mod(\arctwodir+\arctwoshade,2))}

		\begin{scope}[shift={(A)}]
			\thinput{0.35}[\Y][90*\arconedir][\arconelabel][\the\numexpr \iscrossingone + \isemptyone \relax]
		\end{scope}
		\begin{scope}[shift={(B)}]
			\thinput{0.35}[\Z][90*\arctwodir][\arctwolabel][\the\numexpr \iscrossingtwo + \isemptytwo \relax]
		\end{scope}

		\node[anchor=center] at (0.85,0.85) {\qdeg};
		\node[anchor=center] at (0.85,-0.85) {\hdeg};
		\node[anchor=center] at (-0.85,-0.85) {\hslashdeg};

		\extralines
	\end{tikzpicture}
}

\newcommand{\ThreeHills}[1][]{
	\begin{tikzpicture}[baseline={(current bounding box.center)}]
		\def\myextracode{}

		\pgfkeys{/diagramarcs, #1}

		\coordinate (O) at (0,0);
		\coordinate (A) at (30:0.43);
		\coordinate (B) at (150:0.43);
		\coordinate (C) at (270:0.43);

		\begin{pgfinterruptboundingbox}
			\begin{scope}
				\clip (O) circle (1cm);

				\fill[gray!15]
					($(O)+(60:1)$)
					.. controls +(240:0.2) and +(75:0.2) .. ($(A)+(75:0.35)$)
					arc (75:165:0.35)
					.. controls +(165:0.2) and +(15:0.2) .. ($(B)+(15:0.35)$)
					arc (15:105:0.35)
					.. controls +(105:0.2) and +(300:0.2) .. ($(O)+(120:1)$)
					-- ($(O)+(120:1.5)$) arc (120:60:1.5) -- cycle;

				\fill[gray!15]
					($(O)+(180:1)$)
					.. controls +(0:0.2) and +(195:0.2) .. ($(B)+(195:0.35)$)
					arc (195:285:0.35)
					.. controls +(285:0.2) and +(135:0.2) .. ($(C)+(135:0.35)$)
					arc (135:225:0.35)
					.. controls +(225:0.2) and +(60:0.2) .. ($(O)+(240:1)$)
					-- ($(O)+(240:1.5)$) arc (240:180:1.5) -- cycle;

				\fill[gray!15]
					($(O)+(300:1)$)
					.. controls +(120:0.2) and +(315:0.2) .. ($(C)+(315:0.35)$)
					arc (-45:45:0.35)
					.. controls +(45:0.2) and +(-105:0.2) .. ($(A)+(-105:0.35)$)
					arc (-105:-15:0.35)
					.. controls +(-15:0.2) and +(180:0.2) .. ($(O)+(0:1)$)
					-- ($(O)+(0:1.5)$) arc (360:300:1.5) -- cycle;
			\end{scope}
		\end{pgfinterruptboundingbox}

		\draw[dotted,thick] (O) circle (1cm);

		\draw[thick]
			($(A)+(-15:0.35)$) .. controls +(-15:0.2) and +(180:0.2) .. ($(O)+(0:1)$);
		\draw[thick]
			($(A)+(75:0.35)$) .. controls +(75:0.2) and +(240:0.2) .. ($(O)+(60:1)$);

		\draw[thick]
			($(B)+(105:0.35)$) .. controls +(105:0.2) and +(300:0.2) .. ($(O)+(120:1)$);
		\draw[thick]
			($(B)+(195:0.35)$) .. controls +(195:0.2) and +(0:0.2) .. ($(O)+(180:1)$);

		\draw[thick]
			($(C)+(225:0.35)$) .. controls +(225:0.2) and +(60:0.2) .. ($(O)+(240:1)$);
		\draw[thick]
			($(C)+(315:0.35)$) .. controls +(315:0.2) and +(120:0.2) .. ($(O)+(300:1)$);

		\draw[thick]
			($(A)+(165:0.35)$) .. controls +(165:0.02) and +(15:0.02) .. ($(B)+(15:0.35)$);
		\draw[thick]
			($(B)+(285:0.35)$) .. controls +(285:0.02) and +(135:0.02) .. ($(C)+(135:0.35)$);
		\draw[thick]
			($(C)+(45:0.35)$) .. controls +(45:0.02) and +(-105:0.02) .. ($(A)+(-105:0.35)$);

		\pgfmathsetmacro{\XX}{int(1-mod(\arconedir+\arconeshade,2))}
		\pgfmathsetmacro{\YY}{int(mod(\arctwodir+\arctwoshade,2))}
		\pgfmathsetmacro{\ZZ}{int(mod(\arcthreedir+\arcthreeshade,2))}

		\begin{scope}[shift={(A)}, rotate=30]
			\thinput{0.35}[\XX][90*\arconedir][\arconelabel][\the\numexpr \iscrossingone + \isemptyone \relax]
		\end{scope}

		\begin{scope}[shift={(B)}, rotate=60]
			\thinput{0.35}[\YY][90*\arctwodir][\arctwolabel][\the\numexpr \iscrossingtwo + \isemptytwo \relax]
		\end{scope}

		\begin{scope}[shift={(C)}]
			\thinput{0.35}[\ZZ][90*\arcthreedir][\arcthreelabel][\the\numexpr \iscrossingthree + \isemptythree \relax]
		\end{scope}

		\node[anchor=center] at (0.85,0.85) {\qdeg};
		\node[anchor=center] at (0.85,-0.85) {\hdeg};
		\node[anchor=center] at (-0.85,-0.85) {\hslashdeg};

		\extralines
	\end{tikzpicture}
}

\newcommand{\LR}[1][]{
	\begin{tikzpicture}[baseline=(current bounding box.center)]
		\pgfkeys{/diagramarcs, #1}

		\coordinate (O) at (0,0);
		\coordinate (A) at (-0.3,0.3);
		\coordinate (B) at (0.3,-0.3);

		\begin{scope}
			\clip (O) circle (1cm);

			\fill[gray!15]
				($(O)+(-150:1.5)$) -- ($(O)+(-150:1)$)
				.. controls +({30}:0.2) and +(-135:0.2) .. ($(A)+(-135:0.35)$)
				arc (-135:-225:0.35)
				.. controls +({135}:0.1) and +(-45:0.1) .. ($(O)+(135:1)$)
				-- ($(O)+(135:1.5)$) -- cycle;

			\fill[gray!15]
				($(O)+(-100:1.5)$) -- ($(O)+(-100:1)$)
				.. controls +(90:0.1) and +(-135:0.1) .. ($(B)+(-135:0.35)$)
				arc (-135:-45:0.35)
				.. controls +({-45}:0.1) and +(135:0.1) .. ($(O)+(-45:1)$)
				-- ($(O)+(-45:1.5)$) -- cycle;

			\fill[gray!15]
				($(O)+(80:1.5)$) -- ($(O)+(80:1)$)
				.. controls +(-90:0.1) and +({45}:0.1) .. ($(A)+(45:0.35)$)
				arc (45:-45:0.35)
				.. controls +(-45:0.1) and +({135}:0.1) .. ($(B)+(135:0.35)$)
				arc (135:45:0.35)
				.. controls +({45}:0.2) and +(-150:0.2) .. ($(O)+(30:1)$)
				-- ($(O)+(30:1.5)$) -- cycle;
		\end{scope}

		\draw[dotted,thick] (O) circle (1cm);

		\draw[thick]
			($(O)+(-150:1)$)
			.. controls +({30}:0.2) and +(-135:0.2)
			.. ($(A)+(-135:0.35)$);

		\draw[thick]
			($(O)+(-100:1)$)
			.. controls +(90:0.1) and +(-135:0.1)
			.. ($(B)+(-135:0.35)$);

		\draw[thick]
			($(B)+(45:0.35)$)
			.. controls +({45}:0.2) and +(-150:0.2)
			.. ($(O)+(30:1)$);

		\draw[thick]
			($(B)+(-45:0.35)$)
			.. controls +({-45}:0.1) and +(135:0.1)
			.. ($(O)+(-45:1)$);

		\draw[thick]
			($(B)+(135:0.35)$)
			.. controls +({135}:0.1) and +(-45:0.1)
			.. ($(A)+(-45:0.35)$);

		\draw[thick]
			($(A)+(135:0.35)$)
			.. controls +({135}:0.1) and +(-45:0.1)
			.. ($(O)+(135:1)$);

		\draw[thick]
			($(A)+(45:0.35)$)
			.. controls +({45}:0.1) and +(-90:0.1)
			.. ($(O)+(80:1)$);

		\pgfmathsetmacro{\Y}{int(mod(\arconedir+\arconeshade,2))}
		\pgfmathsetmacro{\Z}{int(1-mod(\arctwodir+\arctwoshade,2))}

		\begin{scope}[shift={(A)}]
			\thinput{0.35}[\Y][90*\arconedir][\arconelabel][\the\numexpr \iscrossingone + \isemptyone \relax]
		\end{scope}
		\begin{scope}[shift={(B)}]
			\thinput{0.35}[\Z][90*\arctwodir][\arctwolabel][\the\numexpr \iscrossingtwo + \isemptytwo \relax]
		\end{scope}

		\node[anchor=center] at (0.85,0.85) {\qdeg};
		\node[anchor=center] at (0.85,-0.85) {\hdeg};
		\node[anchor=center] at (-0.85,-0.85) {\hslashdeg};

		\extralines
	\end{tikzpicture}
}

\newcommand{\LRL}[1][]{
	\begin{tikzpicture}[baseline=(current bounding box.center)]
		\pgfkeys{/diagramarcs,#1}

		\coordinate (O) at (0,0);
		\coordinate (A) at (-0.2,0.5);
		\coordinate (B) at (0.5,0);
		\coordinate (C) at (-0.2,-0.5);

		\fill[gray!15]
			($(O)+(120:1)$)
			.. controls +(-70:0.1) and +(135:0.1) .. ($(A)+(135:0.3)$)
			arc (135:225:0.3)
			.. controls +(135:0.2) and +(-135:0.2) .. ($(C)+(135:0.3)$)
			arc (135:225:0.3)
			.. controls +(-135:0.1) and +(70:0.1) .. ($(O)+(-120:1)$)
			arc (-120:-240:1)
			-- cycle;

		\fill[gray!15]
			($(O)+(280:1)$)
			.. controls +(90:0.1) and +(-45:0.1) .. ($(C)+(-45:0.3)$)
			arc (-45:45:0.3)
			.. controls +(45:0.1) and +(-135:0.1) .. ($(B)+(-135:0.3)$)
			arc (-135:-45:0.3)
			.. controls +(-45:0.1) and +(135:0.1) .. ($(O)+(-30:1)$)
			arc (-30:-80:1)
			-- cycle;

		\fill[gray!15]
			($(O)+(30:1)$)
			.. controls +(-135:0.1) and +(45:0.1) .. ($(B)+(45:0.3)$)
			arc (45:135:0.3)
			.. controls +(135:0.1) and +(-45:0.1) .. ($(A)+(-45:0.3)$)
			arc (-45:45:0.3)
			.. controls +(45:0.1) and +(-90:0.1) .. ($(O)+(80:1)$)
			arc (80:30:1)
			-- cycle;

		\draw[dotted,thick] (O) circle (1cm);

		\draw[thick]
			($(C)+(135:0.3)$)
			.. controls +(135:0.2) and +(-135:0.2)
			.. ($(A)+(-135:0.3)$);

		\draw[thick]
			($(C)+(-45:0.3)$)
			.. controls +(-45:0.1) and +(90:0.1)
			.. ($(O)+(280:1)$);

		\draw[thick]
			($(C)+(45:0.3)$)
			.. controls +(45:0.1) and +(-135:0.1)
			.. ($(B)+(-135:0.3)$);

		\draw[thick]
			($(C)+(-135:0.3)$)
			.. controls +(-135:0.1) and +(70:0.1)
			.. ($(O)+(-120:1)$);

		\draw[thick]
			($(B)+(45:0.3)$)
			.. controls +(45:0.1) and +(-135:0.1)
			.. ($(O)+(30:1)$);

		\draw[thick]
			($(B)+(-45:0.3)$)
			.. controls +(-45:0.1) and +(135:0.1)
			.. ($(O)+(-30:1)$);

		\draw[thick]
			($(B)+(135:0.3)$)
			.. controls +(135:0.1) and +(-45:0.1)
			.. ($(A)+(-45:0.3)$);

		\draw[thick]
			($(A)+(135:0.3)$)
			.. controls +(135:0.1) and +(-70:0.1)
			.. ($(O)+(120:1)$);

		\draw[thick]
			($(A)+(45:0.3)$)
			.. controls +(45:0.1) and +(-90:0.1)
			.. ($(O)+(80:1)$);

		\pgfmathsetmacro{\X}{int(mod(\arconedir+\arconeshade,2))}
		\pgfmathsetmacro{\Y}{int(1-mod(\arctwodir+\arctwoshade,2))}
		\pgfmathsetmacro{\Z}{int(mod(\arcthreedir+\arcthreeshade,2))}

		\begin{scope}[shift={(A)}]
			\thinput{0.3}[\X][90*\arconedir][\arconelabel][\the\numexpr \iscrossingone + \isemptyone \relax]
		\end{scope}
		\begin{scope}[shift={(B)}]
			\thinput{0.3}[\Y][90*\arctwodir][\arctwolabel][\the\numexpr \iscrossingtwo + \isemptytwo \relax]
		\end{scope}
		\begin{scope}[shift={(C)}]
			\thinput{0.3}[\Z][90*\arcthreedir][\arcthreelabel][\the\numexpr \iscrossingthree + \isemptythree \relax]
		\end{scope}

		\node[anchor=center] at (0.85,0.85) {\qdeg};
		\node[anchor=center] at (0.85,-0.85) {\hdeg};
		\node[anchor=center] at (-0.85,-0.85) {\hslashdeg};

		\extralines
	\end{tikzpicture}
}

\newcommand{\RLR}[1][]{
	\begin{tikzpicture}[baseline=(current bounding box.center)]
		\pgfkeys{/diagramarcs,#1}

		\coordinate (O) at (0,0);
		\coordinate (A) at (0.2,0.5);
		\coordinate (B) at (-0.45,0);
		\coordinate (C) at (0.2,-0.5);
		\fill[gray!15]
			($(A)+(-135:0.3)$)
			.. controls +(-135:0.1) and +({45}:0.1) .. ($(B)+(45:0.3)$)
			arc (45:-45:0.3)
			.. controls +(-45:0.1) and +({135}:0.1) .. ($(C)+(135:0.3)$)
			arc (135:45:0.3)
			.. controls +({45}:0.2) and +(-45:0.2) .. ($(A)+(-45:0.3)$)
			arc (-45:-135:0.3)
			-- cycle;

		\fill[gray!15]
			($(O)+(-150:1)$)
			arc (-150:-210:1)
			.. controls +(-45:0.1) and +({135}:0.1) .. ($(B)+(135:0.3)$)
			arc (135:225:0.3)
			.. controls +({-135}:0.1) and +(45:0.1) .. ($(O)+(-150:1)$)
			-- cycle;

		\fill[gray!15]
			($(O)+(-60:1)$)
			arc (-60:-100:1)
			.. controls +(90:0.1) and +({-135}:0.1) .. ($(C)+(-135:0.3)$)
			arc (-135:-45:0.3)
			.. controls +({-45}:0.1) and +(110:0.1) .. ($(O)+(-60:1)$)
			-- cycle;

		\fill[gray!15]
			($(O)+(100:1)$)
			arc (100:60:1)
			.. controls +(-110:0.1) and +({45}:0.1) .. ($(A)+(45:0.3)$)
			arc (45:135:0.3)
			.. controls +({135}:0.1) and +(-90:0.1) .. ($(O)+(100:1)$)
			-- cycle;

		\draw[dotted,thick] (O) circle (1cm);

		\draw[thick]
			($(C)+(45:0.3)$)
			.. controls +({45}:0.2) and +(-45:0.2)
			.. ($(A)+(-45:0.3)$);

		\draw[thick]
			($(C)+(-45:0.3)$)
			.. controls +({-45}:0.1) and +(110:0.1)
			.. ($(O)+(-60:1)$);

		\draw[thick]
			($(C)+(135:0.3)$)
			.. controls +({135}:0.1) and +(-45:0.1)
			.. ($(B)+(-45:0.3)$);

		\draw[thick]
			($(C)+(-135:0.3)$)
			.. controls +({-135}:0.1) and +(90:0.1)
			.. ($(O)+(260:1)$);

		\draw[thick]
			($(B)+(-135:0.3)$)
			.. controls +({-135}:0.1) and +(45:0.1)
			.. ($(O)+(-150:1)$);

		\draw[thick]
			($(B)+(135:0.3)$)
			.. controls +({135}:0.1) and +(-45:0.1)
			.. ($(O)+(150:1)$);

		\draw[thick]
			($(B)+(45:0.3)$)
			.. controls +({45}:0.1) and +(-135:0.1)
			.. ($(A)+(-135:0.3)$);

		\draw[thick]
			($(A)+(45:0.3)$)
			.. controls +({45}:0.1) and +(-110:0.1)
			.. ($(O)+(60:1)$);

		\draw[thick]
			($(A)+(135:0.3)$)
			.. controls +({135}:0.1) and +(-90:0.1)
			.. ($(O)+(100:1)$);

		\pgfmathsetmacro{\X}{int(1-mod(\arconedir+\arconeshade,2))}
		\pgfmathsetmacro{\Y}{int(mod(\arctwodir+\arctwoshade,2))}
		\pgfmathsetmacro{\Z}{int(1-mod(\arcthreedir+\arcthreeshade,2))}

		\begin{scope}[shift={(A)}]
			\thinput{0.3}[\X][90*\arconedir][\arconelabel][\the\numexpr \iscrossingone + \isemptyone \relax]
		\end{scope}
		\begin{scope}[shift={(B)}]
			\thinput{0.3}[\Y][90*\arctwodir][\arctwolabel][\the\numexpr \iscrossingtwo + \isemptytwo \relax]
		\end{scope}
		\begin{scope}[shift={(C)}]
			\thinput{0.3}[\Z][90*\arcthreedir][\arcthreelabel][\the\numexpr \iscrossingthree + \isemptythree \relax]
		\end{scope}

		\node[anchor=center] at (0.85,0.85) {\qdeg};
		\node[anchor=center] at (0.85,-0.85) {\hdeg};
		\node[anchor=center] at (-0.85,-0.85) {\hslashdeg};

		\extralines
	\end{tikzpicture}
}

\theoremstyle{plain}
\newtheorem{theorem}{Theorem}[section]
\newtheorem*{theorem*}{Theorem}
\newtheorem{lemma}[theorem]{Lemma}
\newtheorem{proposition}[theorem]{Proposition}
\newtheorem{corollary}[theorem]{Corollary}
\newtheorem*{corollary*}{Corollary}

\theoremstyle{definition}
\newtheorem{definition}[theorem]{Definition}
\newtheorem{example}[theorem]{Example}

\theoremstyle{remark}
\newtheorem{remark}[theorem]{Remark}
\newtheorem{question}[theorem]{Question}

\title{On the $A_\infty$-extension of Bar-Natan multicurves}
\author{Tomás Mejía-Gómez}
\date{}

\begin{document}

\maketitle

\begin{abstract}
	Kotelskiy, Watson and Zibrowius show that the Bar-Natan invariant $\De(T)$
	associated to a $4$-ended oriented tangle $T$ extends to a twisted complex
	$\De^\infty(T)$ over an $A_\infty$-enhancement of the Bar-Natan algebra. We
	prove that $\De^\infty(T)$ itself is a tangle invariant, settling a conjecture from their work. 
	We deduce this from an invariance statement for a
	\emph{matrix multifactorization} $\mathcal M(T)$, generalizing a construction
	of Ballinger to the coefficient ring $\Z[G]$. This multifactorization is well defined up to a
	notion of $1$-homotopy equivalence. The proof rests on explicit formulas for
	special deformation retracts of Koszul matrix factorizations, which realize
	delooping in this setting and may be of independent interest.
\end{abstract}

\section{Introduction}

Khovanov homology is an invariant of links in the 3-sphere which is very tractable by virtue of the combinatorial nature of its definition. In Bar-Natan's exposition in \cite{bn}, one starts with a link diagram $D$ drawn on a disk. Any possible total resolution of the crossings of $D$ yields an object in a cobordism category $\Cobthree$. The objects are embedded 1-submanifolds in the disk, and the morphisms are 2-dimensional embedded cobordisms in the solid cylinder. By taking an additive completion and quotienting out by a few local relations between cobordisms, one obtains a convenient category $\Cobquot$ where all resolutions of the diagram $D$ can be arranged into a chain complex $[\![D]\!]\in Ch(\Cobquot)$. This defines a link invariant up to chain homotopy.

Bar-Natan's local formulation allows for direct generalization to tangles. Moreover, different coefficient rings can be considered if the local relations are suitably adapted. In our case of interest, we work with a category denoted $\Cobquot(E^{2k}_*)$, with $k>0$, defined over the coefficient ring $\Z[G]$, where all objects have the same pointed set $E^{2k}_*$ of size $2k$ as their boundary. The basepoint is labeled by $*$. Of particular interest are crossingless tangles with no closed components, referred to as \emph{crossingless matchings}. In the case of 4 ends, we have two elementary crossingless matchings
\[
	\circ:=\scalebox{0.5}{\Elementary[sha1=-1, emp1, extra={\node at (-0.62,0.6) {*};}] }
	\qquad\qquad
	\bullet:=\scalebox{0.5}{\Elementary[dir1=1, emp1, sha1=-10, extra={\node at (-0.62,0.6) {*};} ]}_.
\]
Any object in $\Cobquot(E^{2k}_*)$ is isomorphic to a direct sum of shifted copies of crossingless matchings, a result known as \emph{delooping}. For this reason, one considers the linear subcategory $\mathcal B=\mathcal B(E^{4}_*)$ generated by the crossingless matchings $\bullet,\circ \in \Cobquot$. Thought of as an algebra with orthogonal idempotents, $\mathcal B$ is succinctly described as the quiver algebra over $\Z$
\[
	\mathcal B= \End_{\Cobquot(E_*^4)}(\circ\oplus\bullet)
	= \left[
	\begin{tikzcd}
		\circ
		\arrow[loop left, "D"]
		\arrow[r, bend left=20, "S"]
		&
		\bullet
		\arrow[loop right, "D"]
		\arrow[l, bend left=20, "S"]
	\end{tikzcd}
	\right]_
	{/(SD=0, DS=0)}.
\]
The chain complex $[\![T]\!]$ associated to a 4-ended oriented tangle $T$ can then be studied as a chain complex $\De(T)$ on the additive enlargement $\Sigma \mathcal B$. A benefit of this approach is that, over a field, the derived representation theory of $\mathcal B$ is well understood in terms of multicurves in punctured spheres. See \cite{hkk} and \cite{kwz}.

The algebra $\mathcal B$ admits another interpretation in the language of matrix factorizations with potential $W=xyz\in R= \Z[x,y,z]$. Roughly speaking, such a matrix factorization is a free $\Z/2$-graded $R$-module $M$ together with a degree-1 map $d_M$ such that $d_M^2=W\cdot\id_M$. Matrix factorizations over $R$ with potential $W$ are the objects of a $\Z/2$-graded dg-category $MF(R,W)$. Let us consider the following elementary objects in $MF(R,W)$:
\[
	M_\circ = \left[\mfshort{\hslash R}{R}{x}{yz} \right],
	\qquad
	M_\bullet = \left[\mfshort{\hslash R}{R}{y}{xz} \right],
\]
where the piece of degree 1 is indicated by $\hslash$. With suitable grading conventions, and working over a field $\mathbf k$, there is an identification
\begin{equation}\label{eq:homologyofmf}
	\mathcal B\otimes \mathbf k = H( \End_{MF(R\otimes \mathbf k,W)}(M_\circ\oplus M_\bullet))
\end{equation}
as algebras. As the homology of a dg-algebra, $\mathcal B\otimes \mathbf k$ inherits higher operations that enhance it into an $A_\infty$-algebra $\mathcal B^\infty_{\mathbf k}$, well defined up to quasi-isomorphism. By the homological mirror symmetry results of \cite{aaeko}, the $A_\infty$-algebra $\mathcal B^\infty_{\mathbf k}$ can also be understood in terms of the wrapped Fukaya category of the 3-punctured sphere. From this perspective, there is a concrete $A_\infty$-algebra $\mathcal B^\infty_{ds}$ defined over the integers by explicit combinatorial rules for its higher operations. It extends $\mathcal B$, and $\mathcal B^\infty_{ds}\otimes\mathbf k$ is quasi-isomorphic to $\mathcal B^\infty_{\mathbf k}$. This construction first appeared in \cite{hkk}. Once an extension of $\mathcal B$ into an $A_\infty$-algebra is fixed, an extension of $\Cobquot(E^{4}_*)$ into an $A_\infty$-category is determined by a delooping argument. We prove this more generally, for an arbitrary number of ends, in Theorem \ref{thm:barnatanainfinity}.

Matrix factorizations were first considered in the context of link homologies in \cite{krfacto}, where Khovanov and Rozansky produced the first categorifications of $\sln$-link polynomials for arbitrary $n>1$. Even though categories of matrix factorizations present a more computationally challenging point of view than Bar-Natan's, one could expect to gain new insight from the higher structures present there. An instance of this is found in work of Ballinger \cite{bal}, where matrix factorizations are used to define new (non-orientable) concordance invariants of knots. This construction can be seen as a lift of the Bar-Natan theory in the setting $G=0$.

In the papers \cite{kwz} and \cite{kwzmutation}, Kotelskiy, Watson and Zibrowius give a systematic treatment of multicurve invariants coming from the tangle invariant $\De(T)$.
These works prove several interesting structural results about multicurves associated to 4-ended tangles and about Khovanov homology, culminating in a proof of invariance of Khovanov homology over any field under component-preserving mutation. One crucial point is the fact that, working over a field, $\De(T)$ extends into a (bifiltered) twisted complex $\De^\infty(T)$ over the $A_\infty$-algebra $\mathcal B_{ds}^\infty$. The invariance of $\De^\infty(T)$ is left as a conjecture in \cite{kwzmutation}, which we settle in the affirmative.\\

The present work deals with a generalization of Ballinger's construction to the coefficient ring $\Z[G]$, agreeing with the one from which $\De^\infty(T)$ is obtained in \cite{kwzmutation}. To a tangle diagram $D$ with some decorations we associate a \emph{matrix multifactorization} $\mathcal M(D)$: a twisted complex in a suitable category of matrix factorizations, carrying a \emph{homological} $\Z$-grading $h$. The twisted differential is only required to be filtered of $h$-degree $\geq 1$, while homotopies are allowed to have filtered $h$-degree $\geq-1$. This yields the notion of \emph{1-homotopy equivalence}, which is used for our main invariance result.

\begin{theorem*}
	Let $D$ and $D'$ be decorated tangle diagrams with the same boundary data presenting the same oriented tangle $T$. Then $\mathcal M(D)$ and $\mathcal M(D')$ are 1-homotopy equivalent.
\end{theorem*}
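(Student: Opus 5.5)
To prove the theorem, I would follow the standard Reidemeister-move strategy, adapted to filtered twisted complexes. Two decorated diagrams presenting the same oriented tangle are related by a finite sequence of planar isotopies, changes of decoration (basepoint placement, orientation-dependent labels, choices of variables on edges), and Reidemeister moves R1, R2, R3. Since $1$-homotopy equivalence is transitive, it suffices to treat each elementary change separately.

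The first step is locality. I would show that $\mathcal M(D)$ is built from local pieces by a tensor-product (gluing) operation over shared edge variables, and that this operation preserves $1$-homotopy equivalence. Concretely, suppose a twisted differential of filtered $h$-degree $\geq 1$ and homotopies of $h$-degree $\geq -1$ are tensored with identities on the complementary piece. The $h$-degree conditions survive, so a $1$-homotopy equivalence of a local piece induces one of the whole. Decoration changes and planar isotopies should then give isomorphisms, or at least explicit $1$-homotopy equivalences, via variable substitutions and standard isomorphisms of Koszul factorizations. Note that the Koszul factorizations in question are those of the form $\kmf{}{a & b}$, where $ab$ is a difference of potentials.

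The heart of the argument is the local Reidemeister moves. For each one, I would write the local multifactorization as a cube of resolutions whose vertices are Koszul matrix factorizations; some vertices contain closed loops or bigon-type webs. To these I would apply the paper's explicit special deformation retracts of Koszul matrix factorizations, which realize delooping. Each such retract contracts an extra variable, or a loop, into a direct sum of two shifted simpler factorizations. I would then apply the homological perturbation lemma to the twisted differential, obtaining a smaller twisted complex $1$-homotopy equivalent to the original. The perturbation series terminates or converges because the perturbation is $h$-filtered of positive degree, while the homotopy drops $h$ by at most one. This is exactly why the degree bounds in the definition of $1$-homotopy equivalence are $\geq 1$ and $\geq -1$. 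After cancelling the acyclic pieces, the complexes for R1 and R2 should reduce to the trivial-tangle factorization. For R3, I would reduce both sides to a common complex, following Bar-Natan's argument in which R3 is deduced from R2 invariance via a cone.

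I expect the main obstacle to be controlling the higher perturbation terms. In the matrix-factorization setting, the transferred differential can acquire components of $h$-degree $>1$, such as length-two arrows, which have no analogue in Bar-Natan's category. One must check that after the R3 reductions the two sides agree, including these higher components, up to a $1$-homotopy. I would first compute the perturbed differentials explicitly using the retract formulas and then compare them. Where they differ, I would construct an explicit filtered isomorphism of twisted complexes, with correction terms, that intertwines them. A cleaner alternative would be to exhibit the R3 complexes as cones of R2-type equivalences, so that the comparison follows formally from the R2 case.
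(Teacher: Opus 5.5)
Your plan for the Reidemeister moves follows the paper's route: vertex-wise arc-removal and delooping retracts, promoted to a special $0$-deformation retract by the Perturbation Lemma~\ref{lem:specialDefRetract}, followed by cancellation of summands on which a $d_1$-component is an isomorphism. Gluing by tensoring with identities is also implicit in the paper.

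\textbf{The gap is in your decoration step.} The decoration that actually changes $\mathcal M(D)$ is the arc system $A$, and your list omits it. (The distinguished end is fixed by hypothesis, and changing crossing indices only costs the swap isomorphisms $\tau_{i,j}$.) Changing $A$ is not an isomorphism. Each superfluous arc $\alpha$ adds a free variable $X_\alpha$ and an extra Koszul factor, so when $A'=A\cup\{\alpha\}$ the two multifactorizations are modules over different edge rings, $R_{D'}\cong R_D[X_\alpha]$. No variable substitution identifies them; they are only homotopy equivalent over the boundary ring. What is needed is Proposition~\ref{prop:arcinvariance}:
\begin{itemize}
\item For $A\subset A'$, apply the arc-removal retract \eqref{sdr:interiorfaceremoval} at every vertex of the cube and perturb it to a special $0$-deformation retract.
\item For arbitrary $A,A'$ you cannot simply pass to $A\cup A'$, because arcs must be pairwise disjoint. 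The paper removes intersection points between $A$ and $A'$ one at a time, replacing $\alpha$ by two parallel arcs $\alpha_i,\alpha_j$ that detour around the tip segment of $\alpha'$.
\end{itemize}
Without this you only obtain invariance for particular arc choices. The local retracts produce specific arc systems on the simplified side (e.g.\ the arc in the target of R2a), arcs of $D$ may pass through a Reidemeister region, and the relabeling on the right-hand side of R3 needs justification.

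\textbf{Your convergence argument is also incorrect.} A perturbation of $h$-degree $\geq 1$ composed with a homotopy of $h$-degree $\geq -1$ has $h$-degree $\geq 0$, and that gives no nilpotency. In the paper, nilpotency of $(\tilde d-d)k$ is a separate hypothesis. The degree bounds only ensure that $A=\sum_j((\tilde d-d)k)^j(\tilde d-d)$ has filtered degree $\geq 1$, so that $\tilde d'_0=d'_0$ and the transferred data is again a special $1$-deformation retract. In the applications, nilpotency holds for different reasons:
\begin{itemize}
\item The vertex-wise homotopies are vertical, so $(\tilde d-d)k$ strictly raises $h$. It is nilpotent because multifactorizations are bounded in $h$.
\item In the cancellation step the nullhomotopy has $h$-degree $-1$. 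Nilpotency comes from the block form relative to $C\oplus M'$: the composite $\gamma k$ lands in $M'$, where $k$ vanishes, as in Lemma~\ref{lem:SummandLemma}.
\end{itemize}
It is this cancellation, not convergence, that forces homotopies of $h$-degree $-1$, and hence only a $1$-homotopy equivalence.

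\textbf{On R3, the paper does not use a cone argument.} Such an argument would still require comparing transferred connecting maps that carry higher components. Instead the paper simplifies one side explicitly. It first applies relabelings at vertices $110,111$ and an arc slide at $101$; delooping at $101$ needs the slide $\psi_{2,3}$ first. It then obtains the other side from the symmetry given by $180^\circ$ rotation, coloring inversion and orientation reversal. Finally it identifies the two results by a vertical isomorphism built from $s_2s_3\tau_{2,3}$, with no correction terms.
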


\begin{corollary*}
	Let $\mathbf k$ be a field. The bifiltered twisted complex $\De^\infty(T)$ over $\mathcal B_{ds}^\infty\otimes\mathbf k$ is an invariant of $T$, up to 1-homotopy equivalence.
\end{corollary*}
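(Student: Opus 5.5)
The corollary will be deduced from the main invariance theorem for $\mathcal M(D)$ by passing from matrix multifactorizations to twisted complexes over $\mathcal B^\infty_{ds}\otimes\mathbf k$. The key point is that this passage is functorial for $1$-homotopy equivalences.

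\textbf{Step 1: delooping on the matrix factorization side.} Using the explicit special deformation retracts for Koszul matrix factorizations, each resolution in $\mathcal M(D)$ retracts onto a direct sum of shifted elementary factorizations $M_\circ$ and $M_\bullet$. These retracts are the matrix-factorization version of delooping. Applying the homological perturbation lemma to the twisted differential of $\mathcal M(D)$ transfers it to a twisted complex over the full subcategory on $M_\circ\oplus M_\bullet$. After base change to $\mathbf k$, the identification \eqref{eq:homologyofmf} lets us transfer further to the minimal model $\mathcal B^\infty_{\mathbf k}\simeq\mathcal B^\infty_{ds}\otimes\mathbf k$, giving $\De^\infty(T)$.

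\textbf{Step 2: check that $h$-filtration conditions survive transfer.} The retracts have $h$-degree $0$, and the perturbations have filtered $h$-degree $\ge1$. Every term of the perturbation series is therefore filtered of $h$-degree $\ge1$, so the transferred differential is again a bifiltered twisted complex. I need to confirm that the object so obtained agrees with the $\De^\infty(T)$ constructed in \cite{kwzmutation}. Concretely, its $\mathcal B$-part must be $\De(T)$; this holds because delooping on the Bar-Natan side is compatible with Theorem~\ref{thm:barnatanainfinity}.

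\textbf{Step 3: push forward the homotopy equivalence.} By the main theorem, a change of diagram $D\to D'$ gives $1$-homotopy equivalent $\mathcal M(D)$ and $\mathcal M(D')$. The equivalence consists of maps $f,g$ of filtered $h$-degree $\ge0$ and homotopies of filtered $h$-degree $\ge-1$. Composing with the transferred inclusions and projections, which are also built from perturbation series, yields maps and homotopies between the transferred twisted complexes. These are $A_\infty$-twisted morphisms, since the transfer is an $A_\infty$-functor that is a quasi-equivalence onto its image. They satisfy the same filtration bounds, so the two transferred complexes are $1$-homotopy equivalent. Changing the decorations, such as the basepoint choice or the auxiliary data in the definition, is covered by the same theorem.

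\textbf{Main obstacle.} The hard part is Step 3's control of filtration degrees. Homotopies of degree $\ge-1$ paired with differentials of degree $\ge1$ produce degree-$\ge0$ terms inside the perturbation series. I must verify that these series still converge, either by finiteness or by completeness of the bifiltration, and that they do not spoil the bound. To handle this, I would first work with the bigrading, where the quantum grading gives finiteness. I would then run the convergence argument on the total filtration $h+$(quantum), on which all operations are strictly bounded.
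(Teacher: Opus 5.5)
Your overall route is the paper's. Take $\mathcal M'(D)$ a special $0$-deformation retract of $\mathcal M(D)$, and define $\De^\infty(D)$ as the image of $\mathcal M'(D)\otimes\mathbf k$ under the functor induced by a unital $q$-filtered quasi-isomorphism $\mathcal F$ to $\mathcal B^\infty_{ds}\otimes\mathbf k$. The paper takes $\mathcal F$ as given from \cite{kwzmutation} rather than passing through a minimal model, but only unitality and $q$-filteredness matter. The main theorem then makes $\mathcal M'(D)$ and $\mathcal M'(D')$ $1$-homotopy equivalent, and induced functors preserve $1$-homotopy equivalences.

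What goes wrong is your justification that the filtration bounds survive. The obstacle you single out never occurs. The observation you are missing is that $1$-homotopies only ever enter linearly, so no series in homotopies of negative $h$-degree is ever formed. Check the three places where maps are built:
\begin{itemize}
\item \emph{The retract.} The perturbation producing $\mathcal M'(D)$ uses a vertical homotopy $k$. Hence $(\tilde d-d)k$ strictly raises $h$, and it is nilpotent because multifactorizations are bounded in $h$.
\item \emph{Combining with the main theorem.} The equivalence data from the main theorem meets the retract data $(\iota,\rho,k)$, $(\iota',\rho',k')$ only through composition. For instance, $\rho'f\iota$ has left inverse $\rho g_l\iota'$ via the homotopy $\rho\kappa_l\iota+\rho g_lk'f\iota$, and similarly on the right. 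Each term contains a single homotopy factor, so it has filtered degree $\geq-1$.
\item \emph{The induced functor.} It sends a morphism $a$ to $\sum_{i,j}\mathcal F_{i+j+1}(\delta,\dots,\delta,a,\delta,\dots,\delta)$, where $a$ occurs once and every other input has $h$-degree $\geq1$. This sum is finite and does not lower the filtered degree of $a$. The condition $q(\mathcal F_n)=3(n-1)$ gives the required $(\hslash,q)$-homogeneity. The transported homotopy is $\mathcal F_1(\kappa)$ corrected by $\pm\mathcal F_2(g,f)$, which is a $0$-morphism, and unitality gives $\mathcal F_1(1)=1$.
\end{itemize}

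Your proposed remedy would fail even if the obstacle were real. The quantum grading gives no finiteness: morphism spaces over polynomial rings, and in $\mathcal B$ with all powers of the dotted cylinder, are unbounded in $q$. Nor is $h+q$ strictly raised by the composites you worry about. $\delta_1$ has $(h,q)$-degree $(1,0)$ and the component $\kappa_{-1}$ of a $1$-homotopy has $(h,q)$-degree $(-1,0)$, so every power of $\delta_1\kappa_{-1}$ has total degree $0$. When one genuinely perturbs along a $1$-homotopy, as in Lemma~\ref{lem:SummandLemma}, nilpotence must come from the shape of the data. There $(\gamma k)^2=0$ because $k$ vanishes on $M'$, which is why Lemma~\ref{lem:specialDefRetract} carries a nilpotence hypothesis. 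That issue lives in the proof of the main theorem, not in this corollary.

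Two smaller points:
\begin{itemize}
\item The comparison with \cite{kwzmutation} in your Step~2 is not part of this statement, since $\De^\infty(D)$ is defined via $\mathcal F$. Theorem~\ref{thm:barnatanainfinity} would not supply that comparison anyway.
\item The basepoint is part of the boundary data $E^4_*$, not a decoration the main theorem lets you vary. What does need checking is independence of the choice of $\mathcal M'(D)$. This is automatic, because any two choices are $0$-homotopy equivalent to $\mathcal M(D)$.
\end{itemize}
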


The proof of invariance of $\mathcal M(D)$ rests on machinery we develop to get explicit formulas for special deformation retracts of Koszul matrix factorizations, of which the analysis of Reidemeister moves is only a special case. Indeed, we are able to obtain explicit data for delooping cubes of resolutions of arbitrary tangles in the context of matrix factorizations, as formulated more precisely in Proposition \ref{prop:deloopmulti}. We also expect this machinery to facilitate a generalization to $\sln$-link homologies for $n>2$.

It is natural to expect a generalization of \eqref{eq:homologyofmf} for the algebra $\mathcal B(E^{2k}_*)$ with arbitrary number of ends, although the author has not found it in the literature. This suggests, in turn, the existence of a preferred $A_\infty$-extension of $\Cobquot(E_*^{2k})$ up to quasi-isomorphism, for which an intrinsic description would be desirable.

\begin{question}
	Is there a tractable definition of an $A_\infty$-extension of $\Cobquot(E_*^{2k})$ in terms of cobordisms, such that it is quasi-isomorphic to one induced from an extension of $\mathcal B(E^{2k}_*)$ coming from matrix factorizations?
\end{question}
Once a desirable $A_\infty$-extension of $\Cobquot(E_*^{2k})$ is obtained, then we would also like to understand tangle invariants intrinsically.
\begin{question}
	Is there a formulation of the extended Bar-Natan invariant $\De^\infty(T)$ without appeal to matrix factorizations, by adapting the definition of the Bar-Natan bracket $\bnbrack{T}$ into one of a twisted complex over the distinguished extension of $\Cobquot(E_*^{2k})$?
\end{question}

In the case of 4-ended tangles, the explicit combinatorial description of the extension $\mathcal B^\infty_{ds}$ of $\mathcal B$ can be transferred to $\Cobquot(E_*^{4})$ to give a partial answer to the first question. The question of a description of $\De^\infty(T)$ intrinsic to the Bar-Natan category, however, remains elusive even for $4$-ended $T$.\\

\noindent\textbf{Structure of this paper.} Sections~\ref{sec:bn} and \ref{sec:ainfinity} deal with the generalities of the Bar-Natan category, the invariant $\De(T)$ and $A_\infty$-categories, including the transfer of $A_\infty$-structures from $\mathcal B(E^{2k}_*)$ to the Bar-Natan category $\Cobquot(E^{2k}_*)$. In Section~\ref{sec:mf}, we discuss the matrix factorization approach to crossingless tangles in the disk, going through the explicit formalism that produces convenient equivalences of matrix factorizations, including delooping. Next, we define the matrix multifactorization $\mathcal M(D)$ associated to a tangle diagram $D$ in Section~\ref{sec:multif}, and recall how it produces the $A_\infty$-extension $\De^\infty(T)$ of $\De(T)$. In the final Section~\ref{sec:inv}, we prove the invariance of $\mathcal M (-)$ and $\De^\infty(-)$ up to the adequate notion of homotopy equivalence.\\

\noindent\textbf{Acknowledgements.} The author was supported by Claudius Zibrowius' Starting Grant (project no. 101115822) from the European Research Council (ERC), and thanks him for suggesting this project and for many helpful conversations.

\section{The Bar-Natan construction}\label{sec:bn}

We begin with some definitions pertaining to Bar-Natan categories $\Cobquot(E)$. In general, objects are formal direct sums of 1-manifolds embedded in the disk $\mathbb D^2 $ with a fixed boundary $E\subset \partial\mathbb D^2$. The coefficient ring is $\Z[G]$, which is graded by quantum degree $q$, and such that $q(G)=-2$. We keep $E$ arbitrary in our definitions below but, for concreteness, we often restrict ourselves to the set $E^{2k}$ consisting of the points in the unit circle of the form $e^{\frac{(2j-1)\pi}{2k}i}$, for $j=1,\dots,2k$. It is also useful to consider the pointed set $E^{2k}_*$ with underlying set $E^{2k}$ and basepoint $*=e^{\frac{(2k-1)\pi}{2k}i}$.

\begin{definition}
	Let $E$ be a finite set of points on the boundary circle of a 2-dimensional disk $\mathbb D^2$. The linearized cobordism category $\Cobthree(E)$ is the $\Z$-graded $\Z[G]$-linear category defined as follows.
	\begin{itemize}
		\item Objects are embedded 1-manifolds (crossingless tangles) $\Gamma\subseteq \mathbb D^2$ such that $\partial \Gamma=\Gamma\cap\partial \mathbb D^2=E$.
		\item Morphisms $\Gamma_0\to \Gamma_1$ are formal $\Z[G]$-linear combinations of dotted cobordisms between $\Gamma_0$ and $\Gamma_1$, embedded in $\mathbb D^2\times [0,1]$, and considered up to isotopy rel. boundary. By dotted, we mean that each component may carry a finite number of dots that can freely move along the component.
		\item Identities are given by product cobordisms; composition is by gluing.
		\item The (quantum) grading of a cobordism $C$ is defined to be $q(C):= \chi(C)-\frac12 |E|-2\#\mbox{dots}$.

	\end{itemize}
\end{definition}

In the following definition, recall that the \emph{additive enlargement} or \emph{additive completion} $\Sigma\mathcal C$ of a graded linear category $\mathcal C$ has objects given by formal direct sums of shifts of objects in $\mathcal C$, and morphisms given by matrices of morphisms in $\mathcal C$.

\begin{definition}
	Let $E$ be a finite subset of the circle $\partial \mathbb D^2$. The \emph{Bar-Natan category} $\Cobquot(E)$ is the category $\Cobquot(E)=\Sigma\Cobthree(E)_{/\ell}$ obtained by quotienting with respect to the following local relations and taking additive enlargement.
	\begin{align*}
		\figeq{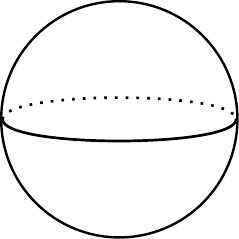}=0
		&& \figeq{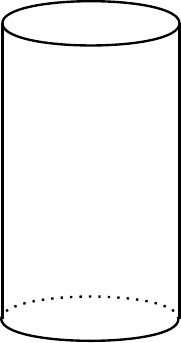} = \figeq{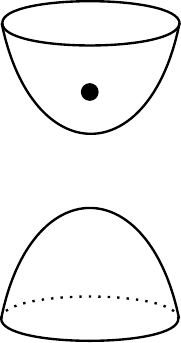}+\figeq{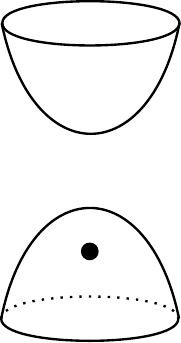}+G\figeq{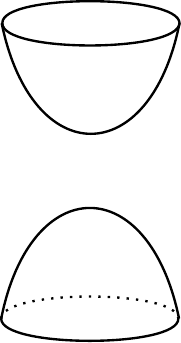}
		\\\\
		\figeq{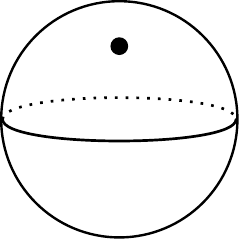}=1
		&& \figeq{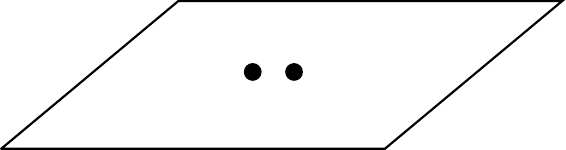} \,+\, G\figeq{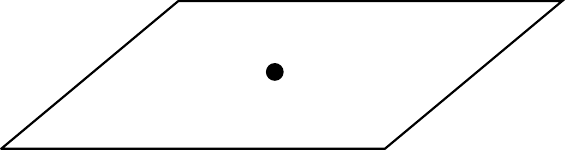}=0
	\end{align*}
	If $E_*$ is a pointed set with underlying set $E$ and basepoint $*\in E$, then we define $\Cobquot(E_*)$ as a further quotient with respect to the additional relation
	\[
		\figeq{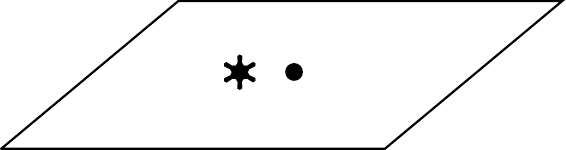}=0,
	\]
	which asserts the vanishing of cobordisms with dots on the based component.
\end{definition}

We introduce the red dot notation
\[
	\figeq{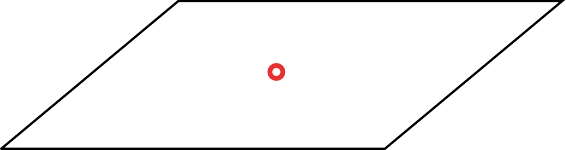}:= \figeq{dottedplane} + G\figeq{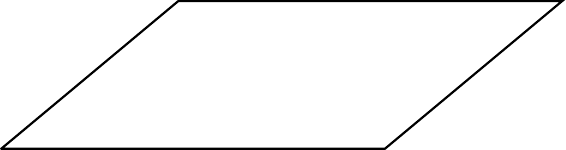}_.
\]
This allows us to express certain linear combinations of dotted cobordism in $\Cobquot(E)$ as a single \emph{decorated cobordism}, that is, a cobordism possibly with black and red dots. Observe that the presence of a black and a red dot in the same component automatically makes a cobordism vanish.

Given a crossingless tangle $\Gamma\subset \mathbb D^2$, write $\Gamma=c(\Gamma)\cup o(\Gamma)$, where $c(\Gamma)$ consists of the closed components, referred to as \emph{loops}, and $o(\Gamma)$ consists of components with boundary, referred to as \emph{open components}. The set of all loops is denoted by $\mathrm{circ}(\Gamma):= \pi_0(c(\Gamma))$.

As we will see, the local relations ensure that $\Cobquot(E)$ is additively generated by finitely many objects. These generating objects are \emph{crossingless matchings}, that is, crossingless tangles without loops. If $|E|=2k$, the number of crossingless matchings up to isotopy is the $k$-th Catalan number.

\begin{definition}
	Let $E$ be a possibly pointed finite subset of the circle $\partial \mathbb D^2$. We define $\mathcal B(E)$
	to be the full subcategory of $\Cobquot(E)$ with objects given by a full set of representatives of isotopy classes of crossingless matchings with boundary $E$.
\end{definition}
More often, we think of $\mathcal B(E)$ as a $\Z[G]$-algebra with idempotents corresponding to (the chosen) crossingless matchings. In the case $E=E^4_*$, we have the generators
\[
	\circ:=\scalebox{0.5}{\Elementary[sha1=-1, emp1, extra={\node at (-0.62,0.6) {*};}] }_,
	\qquad\qquad
	\bullet:=\scalebox{0.5}{\Elementary[dir1=1, emp1, sha1=-10, extra={\node at (-0.62,0.6) {*};} ]}_.
\]
The algebra $\mathcal B(E^4_*)$ is succinctly described as the quiver algebra
\[
	\mathcal B(E^4_*)=\End_{\Cobquot(E^{4}_*)}(\circ\oplus\bullet)
	= \left[
	\begin{tikzcd}
		\circ
		\arrow[loop left, "D"]
		\arrow[r, bend left=20, "S"]
		&
		\bullet
		\arrow[loop right, "D"]
		\arrow[l, bend left=20, "S"]
	\end{tikzcd}
	\right]_
	{/(SD=0, DS=0)},
\]
over $\Z$, where arrows labeled by $S$ are given by saddle cobordisms, and arrows labeled by $D$ are given by cylinder cobordisms with a dot on the non-distinguished component. The action of $G$ can be succinctly described by $G=S^2-D$.

The following lemma is a rephrasing of the \emph{Delooping Lemma} in \cite{bnfast}, which says that any object in $\Cobquot$ is isomorphic to a direct sum of shifted crossingless matchings.
\begin{lemma}[Delooping]
	The inclusion $\iota:\Sigma\mathcal B(E) \hookrightarrow \Cobquot(E)$ is an equivalence, and thus it induces an equivalence between $Ch(\Sigma\mathcal B(E))$ and $Ch (\Cobquot(E))$.
\end{lemma}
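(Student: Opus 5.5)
The inclusion $\iota$ is fully faithful by construction: $\mathcal B(E)$ is a full subcategory of $\Cobquot(E)$, and additive enlargement preserves full faithfulness because morphisms on both sides are matrices of morphisms between summands. So the whole content is essential surjectivity. I will show that every object $\Gamma\in\Cobthree(E)$, viewed in $\Cobquot(E)$, is isomorphic to a finite direct sum of grading-shifted crossingless matchings. The same then holds for formal direct sums. Once $\iota$ is an equivalence of additive categories, the induced functor $Ch(\Sigma\mathcal B(E))\to Ch(\Cobquot(E))$ is an equivalence by the general fact that an equivalence of additive categories induces one on chain complexes. Concretely, one chooses a quasi-inverse and transports differentials through the chosen isomorphisms, which preserves $d^2=0$ and is compatible with homotopies.

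\textbf{Removing one loop.} Let $\Gamma=\Gamma'\sqcup L$, where $L$ is a loop. I will write down explicit morphisms
\[
\Gamma\to \Gamma'\{1\}\oplus\Gamma'\{-1\},\qquad \Gamma'\{1\}\oplus\Gamma'\{-1\}\to\Gamma.
\]
The first is given by the column (cap with a dot, cap). The second is given by the row (cup, cup with a red dot), or an equivalent choice adapted to the relations with $G$.

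Both composites are then checked using the local relations.
\begin{itemize}
\item The composite $\Gamma'\{\pm1\}\to\Gamma\to\Gamma'\{\pm1\}$ is a $2\times 2$ matrix of spheres carrying various decorations. By the sphere relations (undotted sphere $=0$, once-dotted sphere $=1$) together with the relation expressing two dots in terms of $G$ times one dot, this matrix evaluates to the identity. The red-dot convention is exactly what makes the off-diagonal terms vanish.
\item The composite $\Gamma\to\Gamma$ is a sum of two tubes cut along $L$ with dots distributed, and the neck-cutting relation identifies it with the identity cylinder on $L$.
\end{itemize}
I also need to verify that the quantum degrees match the shifts $\{\pm1\}$, using $q(C)=\chi(C)-\tfrac12|E|-2\#\mathrm{dots}$: a cap has $\chi=1$ and each dot subtracts $2$.

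Iterating over $\mathrm{circ}(\Gamma)$ by induction on the number of loops gives $\Gamma\cong\bigoplus o(\Gamma)\{\cdot\}$ with $2^{|\mathrm{circ}(\Gamma)|}$ summands. Each $o(\Gamma)$ is isotopic rel boundary to the chosen representative matching, and an isotopy induces an isomorphism via the trace cobordism.

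\textbf{Pointed case.} For $\Cobquot(E_*)$ the loops never meet the basepoint, so the extra relation does not interfere with the loop-removal computation, and the same argument applies verbatim. The main obstacle is purely computational: fixing the sign and $G$-conventions in the maps so that the off-diagonal sphere evaluations cancel exactly over $\Z[G]$. I would resolve this by choosing the red dot $\;=\;$ dot $+G$ on one side, precisely so that the product of the two relevant dots reduces to zero by the two-dot relation.
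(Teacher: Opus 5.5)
Your proposal is correct and is essentially the paper's argument. The paper uses the same dotted birth/death cobordisms, with black dots on one side and red dots ($=$ dot $+G$) on the other, verified by the sphere, two-dot and neck-cutting relations. The only difference in packaging is that the paper caps all loops at once, writing down an explicit quasi-inverse functor $\deloop$ with $\deloop\circ\iota=\id$ and natural isomorphisms $\phi_\Gamma$, which it reuses later for the $A_\infty$-extension, instead of arguing loop by loop via essential surjectivity.

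One small fix to your induction: always remove an innermost loop $L$. If $L$ encloses another loop $L'$, there is no embedded cobordism in $\mathbb D^2\times[0,1]$ that caps $L$ while restricting to a product on $L'$, because the capping disk separates $L'\times\{0\}$ from $L'\times\{1\}$.
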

\begin{proof}
	Let us write an explicit inverse equivalence. The \emph{delooping functor} $\deloop:\Cobquot(E)\to \Sigma\mathcal B(E)$ is defined on objects by
	\[
		\deloop(\Gamma) := \bigoplus_{X\subseteq \mathrm{circ}(\Gamma)}q^{|\mathrm{circ}(\Gamma)|-2|X|}o(\Gamma).
	\]

	If $C:\Gamma_0\to \Gamma_1$ is a morphism between crossingless tangles, and $X\subseteq \mathrm{circ}(\Gamma_0)$, $Y\subseteq \mathrm{circ}(\Gamma_1)$, we define $\deloop(C)$ component-wise by
	\[
		\deloop(C)^{X,Y} := \cupdot_Y\circ C\circ \capdot_X.
	\]
	Here, $\capdot_X:o(\Gamma_0)\to \Gamma_0$ is a decorated cobordism consisting of a cylindrical cobordism in $o(\Gamma_0)$, caps bounding all loops in $\mathrm{circ}(\Gamma_0)$, and black dots on all caps bounding loops in $X$. Similarly, $\cupdot_Y:\Gamma_1\to o(\Gamma_1)$ is a decorated cobordism consisting of a cylinder in $o(\Gamma_1)$, cups bounding all loops in $\mathrm{circ}(\Gamma_1)$, and red dots on all cups bounding loops in $\mathrm{circ}(\Gamma_1)\setminus Y$. We have the identity $\deloop\circ\iota=\id_{\Sigma\mathcal B(E)}$ on the nose, while
	\begin{align*}
		\phi_\Gamma:=
		&
		(\capdot_X)_{X\subseteq \mathrm{circ}(\Gamma)} : \bigoplus_{X\subseteq \mathrm{circ}(\Gamma)} q^{|\mathrm{circ}(\Gamma)|-2|X|}o(\Gamma) \longrightarrow \Gamma,
		\\
		\phi^{-1}_\Gamma:=
		&
		(\cupdot_Y)_{Y \subseteq \mathrm{circ}(\Gamma)}:
		\Gamma \longrightarrow
		\bigoplus_{Y\subseteq \mathrm{circ}(\Gamma)} q^{|\mathrm{circ}(\Gamma)|-2|Y|}o(\Gamma)
	\end{align*} give explicit, mutually inverse, natural isomorphisms $\iota\circ\deloop\cong \id_{\Cobquot(E)}$.
\end{proof}

As a consequence of delooping, it is possible to produce a basis for morphism spaces in $\Cobquot(E)$ from a fixed basis of $\mathcal B (E)$. See \cite{naot} for more on bases in $\Cobquot$.

\begin{corollary}\label{cor:basis}
	Let $\Gamma_0, \Gamma_1$ be crossingless tangles in $\mathbb D^2$ with the same boundary $E$. Let $\mathbf B$ be a basis for $\homom_{\mathcal B(E)}(o(\Gamma_0),o(\Gamma_1))$ over $\Z$. Then
	\[
		\left\{\capdot_Y\circ C\circ \cupdot_X\mid C\in\mathbf B,\, X\subseteq \mathrm{circ}(\Gamma_0),\, Y\subseteq \mathrm{circ}(\Gamma_1) \right\}
	\]
	is a basis for $\homom_{\Cobquot(E)}(\Gamma_0,\Gamma_1)$ over $\Z$.
\end{corollary}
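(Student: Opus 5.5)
The plan is to read the statement directly off the explicit natural isomorphisms $\phi_\Gamma$, $\phi_\Gamma^{-1}$ from the proof of the Delooping Lemma. Post- and pre-composition by isomorphisms give a bijection on morphism spaces. Because composition in $\Cobquot(E)$ is $\Z[G]$-bilinear, this bijection is an isomorphism of $\Z$-modules (indeed of $\Z[G]$-modules). It therefore carries a basis to a basis, and it only remains to identify the image of the obvious basis.

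\textbf{Step 1: the isomorphism of $\homom$-spaces.} Consider
\[
	\Psi:\homom_{\Cobquot(E)}(\Gamma_0,\Gamma_1)\longrightarrow \homom_{\Sigma\mathcal B(E)}\Big(\bigoplus_{X} q^{\bullet}o(\Gamma_0),\bigoplus_{Y} q^{\bullet}o(\Gamma_1)\Big),
	\qquad f\mapsto \phi^{-1}_{\Gamma_1}\circ f\circ \phi_{\Gamma_0},
\]
where $X$ runs over subsets of $\mathrm{circ}(\Gamma_0)$, $Y$ over subsets of $\mathrm{circ}(\Gamma_1)$, and $q^\bullet$ denotes the shifts from the lemma. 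Its inverse is $F\mapsto \phi_{\Gamma_1}\circ F\circ\phi^{-1}_{\Gamma_0}$. Both maps are $\Z$-linear, and they are mutually inverse because $\phi_\Gamma$ and $\phi_\Gamma^{-1}$ are mutually inverse; this is exactly what the Delooping Lemma provides.

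\textbf{Step 2: a basis of the target.} Morphisms in an additive enlargement are matrices. Forgetting the grading shifts, which do not affect a $\Z$-basis of the full (ungraded) morphism module, the target is
\[
	\bigoplus_{X,Y}\homom_{\mathcal B(E)}(o(\Gamma_0),o(\Gamma_1)).
\]
It therefore has $\Z$-basis $\{E_{X,Y}(C)\}$, where $C\in\mathbf B$ and $E_{X,Y}(C)$ is the matrix with $C$ in entry $(X,Y)$ and zeros elsewhere.

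\textbf{Step 3: compute $\Psi^{-1}(E_{X,Y}(C))$.} The row components of $\phi^{-1}_{\Gamma_0}$ are the decorated cobordisms $\cupdot_X:\Gamma_0\to o(\Gamma_0)$. The column components of $\phi_{\Gamma_1}$ are the $\capdot_Y:o(\Gamma_1)\to\Gamma_1$. Multiplying out the matrix product gives
\[
	\Psi^{-1}(E_{X,Y}(C))=\capdot_Y\circ C\circ\cupdot_X .
\]
This is exactly the proposed family, so it is a $\Z$-basis.

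I expect no real obstacle here. The only point needing care is bookkeeping in Step 3: checking that the indices and decorations match the conventions of the lemma, so that the entry indexed by $X$ pairs with $\cupdot_X$ and not with its complement. Any reindexing of this kind would in any case be a bijection of the index set, so the conclusion would be unaffected.
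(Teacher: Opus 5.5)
Your proposal is correct and is exactly the argument the paper intends. The paper states the corollary without a separate proof, as a direct consequence of the Delooping Lemma: conjugating by the mutually inverse isomorphisms $\phi_{\Gamma_1}$ and $\phi^{-1}_{\Gamma_0}$ sends the matrix-unit basis $E_{X,Y}(C)$ to $\capdot_Y\circ C\circ\cupdot_X$, and the paper uses the reverse form of your Step 3 computation, $(\phi^{-1}_{\Gamma_1}\circ\capdot_{Y'}\circ C\circ\cupdot_{X'}\circ\phi_{\Gamma_0})^{X,Y}=\delta_{X,X'}\delta_{Y,Y'}C$, in the proof of Theorem~\ref{thm:barnatanainfinity}.
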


\subsection{The Bar-Natan invariant of a tangle} The Bar-Natan construction takes an oriented tangle diagram $D$ for an oriented tangle $T$ with boundary $E\subset \partial\mathbb D^2$ and associates to it a chain complex $[\![ D]\!]$ in $Ch(\Cobquot(E))$. For elementary crossings, we have a complex consisting of a saddle cobordism from the 0-resolution to the 1-resolution.
\[
	D_+=\Elementary[dir1=1, cr1=10]
	\quad\longmapsto\quad
	[\![D_+]\!]=\left[
	\begin{tikzcd}[column sep=1cm]
		\scalebox{0.8}{\Elementary[q=1, sha1=-1, emp1] }
		\arrow[r, "S"]
		&
		\scalebox{0.8}{\Elementary[dir1=1, q=2, h=1, emp1, sha1=-10 ]}
	\end{tikzcd}
	\right]
\]
\[
	D_-=\Elementary[dir1=1, cr1=-10]
	\quad\longmapsto\quad
	[\![D_-]\!]=\left[
	\begin{tikzcd}[column sep=1cm]
		\scalebox{0.8}{\Elementary[dir1=1, q=-2,h=-1, sha1=-4, emp1] }
		\arrow[r, "S"]
		&
		\scalebox{0.8}{\Elementary[q=-1, emp1, sha1=-5 ]}
	\end{tikzcd}
	\right]
\]

In our notation, homological degree shifts are indicated in the lower right, while quantum degree shifts are indicated in the upper right.
\begin{definition}
	For an arbitrary oriented tangle diagram $D$ with boundary $E$ and crossings labeled by $1,\dots,n$, the \emph{Bar-Natan bracket} is the complex over $\Cobquot(E)$ defined by
	\[
		[\![D]\!]:=\bigoplus_{s} q^{n_+-2n_-+|s|}h^{|s|-n_-}\Gamma_s,
	\]
	with indexing over all total resolutions $s\in\{0,1\}^n$. Here, given a total resolution $s$, we denote the number of 1-resolutions in $s$ by $|s|$, and the crossingless tangle obtained by applying all resolutions in $s$ to the diagram $D$ by $\Gamma_s$. The number of positive and negative crossings are denoted by $n_+$ and $n_-$, respectively. To define the differential, let us additionally denote the number of 1-resolutions among the crossings $1,\dots,i-1$ in $s$ by $|s|_{<i}$. The differential $d_{[\![D]\!]}$ has a non-zero component $s\to s'$ precisely when $s$ and $s'$ differ by a change from 0- to 1-resolution, in which case we write $s<_is'$. The precise expression is
	\[
		d_{[\![D]\!]} := \sum_{s<_is'} (-1)^{|s|_{<i}} S_{i},
	\]
	where $S_i$ is a cobordism given by a saddle at the location of the $i$-th crossing, extended by the identity everywhere else.
\end{definition}

Given an oriented tangle $T$ with boundary $E$ represented by a tangle diagram $D$, define
\[
	\De(D):= \deloop(\bnbrack{D})\in Ch(\Sigma\mathcal B(E)).
\]
Bar-Natan proves in \cite{bn} that the bracket $[\![D]\!]$, and therefore its delooping $\De(D)$, is independent of the choice of diagram, up to chain homotopy equivalence. For this reason, we may occasionally write $[\![T]\!]$ and $\De(T)$.

\section{\texorpdfstring{$A_\infty$}{A-infinity}-categories and twisted complexes}\label{sec:ainfinity}

In this section, we consider extensions of $\mathcal B(E)$ into a \emph{minimal} $A_\infty$-algebra, that is, higher operations $(\mu_n)_{n\geq 3}$ that, together with the usual multiplication $\mu_2$, endow $\mathcal B(E)$ with the structure of an $A_\infty$-algebra. Minimality here means $\mu_1\equiv 0$. The motivating example is an extension of $\mathcal B(E^4_*)$ denoted by $\mathcal B^\infty_{ds}$. In subsequent sections, we study the extension of the tangle invariant $\De(D)$ into a bifiltered twisted complex over $\mathcal B^\infty_{ds}$, making precise how this extension affords a tangle invariant.

\subsection{Preliminaries on \texorpdfstring{$A_\infty$}{A-infinity}-categories and twisted complexes}\label{sec:prelimainfinity}
We follow the standard formalism of \cite{seidel}, with some minor adaptations to account for the gradings that appear in the study of tangle invariants.
\begin{definition}
	Let $\mathbf K$ be a commutative ring. An $A_\infty$\emph{-category} $\mathcal C$ over $\mathbf K$ consists of a set of objects $Ob(\mathcal C)$, a $\Z$-graded $\mathbf K$-module $\mathcal C(X,Y)$ for each $X,Y\in Ob(\mathcal C)$, and linear structure maps
	\[
		\mu_n:\mathcal C(X_{n-1},X_n)\otimes\cdots\otimes\mathcal C(X_0,X_1)\to \mathcal C(X_0,X_n)
	\]
	of degree $2-n$ for $n\geq 1$. We refer to $\mu_1$ as the \emph{differential}, $\mu_2$ as the \emph{composition}, and $\mu_n$ as the \emph{higher compositions} for $n\geq 3$. There are \emph{units} $1_X\in\mathcal C(X,X)$ which satisfy $\mu_2(1,a)=a$ and $\mu_2(a,1)=(-1)^{|a|}a$. Higher compositions vanish if any input is a unit. The structure maps satisfy the $A_\infty$-relations
	\[
		0 = \sum_{i+j+k=n} (-1)^\odot \mu_{i+1+k}(a_n,\dots,a_{n-i+1},\mu_j(a_{n-i},\dots,a_{k+1}),a_k,\dots,a_1)
	\]
	for all composable sequences $a_1,\dots,a_n$, where $\odot=|a_k|+\cdots +|a_1|+k$, and $|-|$ denotes the alluded $\Z$-grading. We call $|-|$ the $A_\infty$\emph{-grading}. An $A_\infty$-category is said to be $q$-filtered if there is an additional degree $q$ on each $\mathcal C(X,Y)$ such that the operations $\mu_n$ are homogeneous of degree $q(\mu_n)=3(n-2)$.
\end{definition}

In this paper, an $A_\infty$\emph{-algebra} is an $A_\infty$-category with finitely many objects, which are referred to as \emph{generating orthogonal idempotents}. A dg-category $\mathcal C$ with differential $d$ and composition $\circ$ is regarded as an $A_\infty$-category via $\mu_1(a)=(-1)^{|a|}d(a)$, $\mu_2(b,a)=(-1)^{|a|}b\circ a$, and $\mu_n\equiv 0$ for $n\geq 3$.

An $A_\infty$-\emph{functor} between $A_\infty$-categories $\mathcal C$ and $\mathcal C'$ consists of a map $\mathcal F:Ob(\mathcal C)\to Ob(\mathcal C')$ and a sequence of linear maps of the form
\[
	\mathcal F_n: \mathcal C(X_{n-1},X_n)\otimes\cdots\otimes\mathcal C(X_0,X_1)\to \mathcal{ C'}(\mathcal F(X_0),\mathcal F(X_n))
\]
of $A_\infty$-degree $1-n$ for each $X_0,\dots, X_n\in Ob(\mathcal C)$, satisfying

\begin{align*}
	&\sum_{r} \sum_{i_1+\cdots+i_r=n} \mu_r^{\mathcal C'}\left(\mathcal F_{i_r}(a_n,\dots,a_{n-i_r+1}),\dots,\mathcal F_{i_1}(a_{i_1},\dots,a_1)\right)
	\\\quad&=
	\sum_{j,k} (-1)^\odot \mathcal F_{n-j+1}\left(a_n,\dots,a_{j+k+1},\mu^{\mathcal C}_{j}(a_{j+k},\dots,a_{k+1}),a_k,\dots,a_1\right).
\end{align*}
We call $\mathcal F$ \emph{unital} if $\mathcal F_1(1_X)=1_{\mathcal F(X)}$ and $\mathcal F_n(a_n,\dots,a_1)=0$ for $n\ge2$ whenever some $a_i$ is a unit. A functor $\mathcal F$ between $q$-filtered $A_\infty$-categories is $q$-filtered if $q(\mathcal F_n)=3(n-1)$.

\begin{example}\label{ex:diskseq}
	Consider the quiver algebra
	\[
		\mathcal B:= \mathcal B(E^4_*) =
		\left[
		\begin{tikzcd}
			\circ
			\arrow[loop left, "D"]
			\arrow[r, bend left=20, "S"]
			&
			\bullet
			\arrow[loop right, "D"]
			\arrow[l, bend left=20, "S"]
		\end{tikzcd}
		\right]_
		{/(SD=0, DS=0)}.
	\]
	Following \cite{hkk}, we enhance $\mathcal B$ into an $A_\infty$-algebra $\mathcal B^\infty_{ds}$ over $\Z$ by defining an $A_\infty$-grading and higher operations, as follows. For the grading, we declare
	\begin{align*}
		|\circ \xrightarrow{S}\bullet|=|\bullet \xrightarrow{S}\circ| = |\bullet \xrightarrow{D}\bullet| = 0, \qquad |\circ \xrightarrow{D}\circ|=2.
	\end{align*}
	In order to describe the higher operations, we begin by recursively defining what it means for an even-length sequence of paths in $\mathcal B$ to be a \emph{disk sequence}. A \emph{disk sequence} of length 4 is a composable sequence of arrows of the form $(S,D,S,D)$ or $(D,S,D,S)$. If $(a_{2k},\dots, a_{1})$ is a \emph{disk sequence} of length $2k$, then any sequence of composable paths of the form $(a_{2k},\dots,a_iS,D,S,Da_{i-1},\dots, a_{1})$ with $a_iS\neq 0$ is a disk sequence of length $2k+2$, and so is any sequence of composable paths of the form $(a_{2k},\dots,a_iD,S,D,Sa_{i-1},\dots, a_{1})$ with $a_iD\neq 0$. A disk sequence is defined to be any sequence obtained by iterated application of this procedure. Define $\mu_{2k}^{\mathcal B^\infty_{ds}}:\mathcal B^{\otimes 2k}\to \mathcal B$, for $2k\geq 4$, by declaring
	\begin{align*}
		\mu_{2k}^{\mathcal B^\infty_{ds}} (a_{2k},\dots,a_1b)
		&
		:= b ,
		\\
		\mu_{2k}^{\mathcal B^\infty_{ds}} (ba_{2k},\dots,a_1)
		&
		:= b ,
	\end{align*}
	where $(a_{2k},\dots,a_{1})$ is a disk sequence and $b$ is any path such that $a_1b\neq 0$ (resp. $ba_{2k}\neq 0$). The operation $\mu_{2k}^{{\mathcal B}^\infty_{ds}}$ is also defined to vanish on any input sequence of composable paths that does not have either of the indicated forms. All odd operations $\mu_{2k+1}^{{\mathcal B}^\infty_{ds}}$ vanish identically. Observe that $\mathcal B^\infty_{ds}$ is $q$-filtered with respect to the usual quantum degree in the Bar-Natan category.

\end{example}

Given an $A_\infty$-category $\mathcal C$ over a commutative ring $\mathbf K$, we define its \emph{additive enlargement} $\Sigma \mathcal C$ to be the $A_\infty$-category with objects given by formal expressions
\begin{equation}\label{eq:objectadditive}
	X=\bigoplus_{i\in I} V^i \otimes X^i,
\end{equation}
where $I$ is a finite indexing set, $V^i$ a free (quantum and $A_\infty$-) graded $\mathbf K$-module of finite rank, and $X^i\in Ob(\mathcal C)$. The morphism spaces are matrices with entries generated by elements of the form $\phi\otimes f$, with $\phi$ a homogeneous $\mathbf K$-linear map and $f$ a morphism in $\mathcal C$. More explicitly,
\[
	\left(\sum_k\phi^{jik}\otimes f^{jik}\right)_{i,j}
	\in
	\homom_{\Sigma\mathcal C}\left(\bigoplus_{i\in I} V_0^i \otimes X_0^i, \bigoplus_{j\in J} V_1^j \otimes X_1^j\right)
	=
	\bigoplus_{i,j}\homom_{\mathbf K}(V_0^i,V_1^j)\otimes \homom_{\mathcal C} (X_0^i,X_1^j).
\]
The structure maps are defined by
\[
	\mu^{\Sigma \mathcal C}_d(a_d,\dots,a_1)^{i_d,i_0}
	:=
	\sum_{i_1,\dots, i_{d-1}}
	(-1)^\vartriangleleft
	\phi_d^{i_d,i_{d-1}}\cdots \phi_1^{i_1,i_{0}}
	\otimes
	\mu^{\mathcal C}_d(f_d^{i_d,i_{d-1}},\dots,f_1^{i_1,i_0}),
\]
where
$\vartriangleleft
=
\sum_{p<q} |\phi_p^{i_p,i_{p-1}}|\cdot(|f_q^{i_q,i_{q-1}}|-1)$
and
$a^{ji}_p=\phi_p^{ji}\otimes f_p^{ji}.$ We also consider the $\hslash$\emph{-graded additive enlargement} $\Sigma_{\hslash}\mathcal C$, in which we consider the $V^i$ to be graded by the $\Z/2$-reduction of $|-|$, denoted $\hslash$. Notice that the structure maps are still well defined over $\Sigma_{\hslash}\mathcal C$.

The category $Tw\mathcal C$ of $\hslash$\emph{-graded twisted complexes over} $\mathcal C$ has objects given by pairs $(X,\delta)$ with $X\in Ob(\Sigma_\hslash\mathcal C)$ and $\delta\in\homom_{\Sigma_\hslash\mathcal C}(X,X)$ a morphism of $\hslash$-degree 1 satisfying the \emph{Maurer--Cartan equation}
\[
	\sum_{d=1}^\infty \mu^{\Sigma_{\hslash}\mathcal C}_d(\delta,\dots,\delta)=0.
\]
We call $\delta$ the \emph{twisted differential}. In this paper, we always work with an additional grading that ensures the finiteness of the sum in the Maurer--Cartan equation. A \emph{homological grading} is a function $h:I\to \Z$, where $I$ is the indexing set of $X$. Given an $\hslash$-graded twisted complex $(X,\delta)$ over a $q$-filtered $A_\infty$-category $\mathcal C$, we say that $(X,\delta,h)$ is a \emph{bifiltered twisted complex} if $\delta^{ji}=0$ whenever $h(j)\leq h(i)$, i.e. $\delta$ is strictly upper triangular with respect to the decreasing filtration on $X$ given by
\begin{equation}\label{eq:decreasingfiltration}
	\mathsf F_p X = \bigoplus_{h(i)\geq p} V^i\otimes X^i.
\end{equation}
Thus, the twisted differential $\delta$ has components $\delta_k$ of $h$-degree $h^k$ for $k>0$. We also require $\delta_k$ to have $(\hslash, q)$-degree $\hslash^1q^{3(k-1)}$. The category of bifiltered twisted complexes over $\mathcal C$ is denoted by $Tw^h\mathcal C$.

\begin{example}
	Given a bifiltered twisted complex $(\De^\infty,\delta)\in Tw^h \mathcal B^\infty_{ds}$, there is always an underlying chain complex $(\De,\delta_1)\in Ch(\Sigma\mathcal B)$ obtained by forgetting the $\hslash$-grading and all components $\delta_k$ for $k>1$. In $(\De,\delta_1)$, the homological grading is still $h$, and the differential $\delta_1$ strictly preserves $q$-grading. We say that $\De^\infty$ is an \emph{extension} of $\De$.
\end{example}

Given two bifiltered twisted complexes $(X,\delta)$ and $(X',\delta')$ over an $A_\infty$-category $\mathcal C$, the morphism space is given by $\homom_{Tw^h\mathcal C}((X,\delta),(X',\delta'))=\homom_{\Sigma_{\hslash}\mathcal C}(X,X')$.
There are structure maps given by
\[
	\mu_d^{Tw^h\mathcal C}(a_d,\dots,a_1)
	:=
	\sum_{i_0,\dots,i_d} \mu^{\Sigma_{\hslash}\mathcal C}_{d+i_0+\cdots+i_d}(\underbrace{\delta_{X_d},\dots,\delta_{X_d}}_{i_d}, a_d, \underbrace{\delta_{X_{d-1}},\dots,\delta_{X_{d-1}}}_{i_{d-1}}, a_{d-1}, \dots, a_1, \underbrace{\delta_{X_0},\dots,\delta_{X_0}}_{i_0}).
\]
A morphism $f$ is a \emph{homomorphism} if $\mu_1^{Tw^h\mathcal C}(f)=0$ and its homogeneous component $f_k$ in $h$-degree $h^k$ is also $(\hslash,q)$-homogeneous of degree $\hslash^0 q^{3k}$. A morphism $\kappa$ is a \emph{homotopy} between two homomorphisms $f$ and $g$ if $\mu_1^{Tw^h\mathcal C}(\kappa)=f-g$ and $\kappa_k$ is $(\hslash,q)$-homogeneous of degree $\hslash^1 q^{3(k+1)}$. We say that a homomorphism $f$ is a \emph{homotopy equivalence} if there exist homomorphisms $g_l$ and $g_r$ such that $\mu_2^{Tw^h\mathcal C}(g_l,f)$ and $\mu_2^{Tw^h\mathcal C}(f,g_r)$ are homotopic to identity morphisms.

Now, let $(X,\delta,h)$ and $(X',\delta',h')$ be bifiltered twisted complexes. Let $a:X'\to X$ be a morphism in $Tw^h\mathcal C$ and let $a_k:X'\to X$ be its homogeneous component in $h$-degree $h^k$. We say that $a$ is an $n$-morphism if $a_k\equiv 0$ for $k<-n$. Notice that $\delta$ is always a $(-1)$-morphism. If an $n$-morphism $a$ is also a homomorphism or a homotopy, we say that $a$ is an $n$-\emph{homomorphism} or an $n$-\emph{homotopy}, respectively. Observe that a $0$-morphism (and in consequence an $n$-morphism for any $n\leq 0$) preserves the decreasing filtration \eqref{eq:decreasingfiltration}.

A $0$-homomorphism $f$ is an $n$-\emph{homotopy equivalence} if it is a homotopy equivalence witnessed by homotopy inverses $g_l$ and $g_r$ which are 0-homomorphisms, and by homotopies $\kappa_l$ and $\kappa_r$ which are $n$-homotopies. This defines an equivalence relation. In the study of tangle invariants, we will be mostly interested in the notion of $1$-homotopy equivalence, which is natural in view of the following.

\begin{example}
	Let $f:(\De^\infty,\delta)\to(\tilde\De^\infty,\tilde\delta)$ be a 1-homotopy equivalence in $Tw^h\mathcal B^\infty_{ds}$, witnessed by $(g_l,g_r,\kappa_l,\kappa_r)$. Then the map $f_0:(\De,\delta_1)\to(\tilde\De,\tilde\delta_1)$ between underlying chain complexes is an ordinary chain homotopy equivalence, witnessed by $((g_l)_0,(g_r)_0,(\kappa_l)_{-1},(\kappa_r)_{-1})$.
\end{example}

\begin{remark}
	Given a unital ($q$-filtered) $A_\infty$-functor $\mathcal F:\mathcal C\to \mathcal C'$, there are induced $A_\infty$-functors at the level of additive enlargements, twisted complexes and bifiltered twisted complexes. These induced functors preserve the notions of ($n$-)homomorphism, ($n$-)homotopy and ($n$-)homotopy equivalence.
\end{remark}

\subsection{\texorpdfstring{$A_\infty$}{A-infinity}-enhancements of Bar-Natan categories}

Example \ref{ex:diskseq} provides a particular $A_\infty$-extension $\mathcal B^\infty_{ds}$ of $\mathcal B(E^4_*)$. Given a minimal extension of $\mathcal B (E)$, the following theorem endows all of $\Cobquot(E)$ with the structure of an $A_\infty$-category. The extension of $\Cobquot(E)$ is completely determined by the requirement that $\deloop$ trivially extends into an $A_\infty$-functor, that is, the family defined by $\mathcal F_1=\deloop$ and $\mathcal F_k=0$ for $k>1$ forms an $A_\infty$-functor.

\begin{theorem}\label{thm:barnatanainfinity}
	Let $E$ be a finite subset of the circle $\partial \mathbb D^2$. Assume there are higher operations $(\mu^{\mathcal B^\infty(E)}_n)_{n\geq 3}$ and a grading defining a minimal $A_\infty$-extension $\mathcal B^\infty(E)$ of $\mathcal B(E)$. In other words, $\mu^{\mathcal B^\infty(E)}_1$ vanishes and $\mu^{\mathcal B^\infty(E)}_2$ is the usual multiplication in $\mathcal B(E)$. Then, there is a unique minimal $A_\infty$-extension $\Cobquot^\infty(E)$ of $\Cobquot(E)$ so that $\deloop$ trivially extends into an $A_\infty$-functor $\Cobquot^\infty(E)\to \Sigma \mathcal B^\infty(E)$. In the basis of Corollary \ref{cor:basis}, these higher operations are explicitly given by
	\[
		\mu_n^{\Cobquot^\infty(E)}(\capdot_{Y_n}\circ C_n\circ \cupdot_{X_n},\dots,\capdot_{Y_1}\circ C_1\circ \cupdot_{X_1})
		=
		\begin{cases}
			\capdot_{Y_n}\circ\mu_n^{\mathcal B^\infty(E)}(C_n,\dots,C_1)\circ\cupdot_{X_1}, & \mbox{if every } Y_i=X_{i+1},
			\\
			0, & \mbox{otherwise.}
		\end{cases}
	\]

\end{theorem}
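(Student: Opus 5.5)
The approach is to transport the structure along the equivalence $\deloop$, using that $\deloop$ is faithful on morphism spaces. By Corollary \ref{cor:basis}, $\homom_{\Cobquot(E)}(\Gamma_0,\Gamma_1)$ is free with basis $\capdot_Y\circ C\circ\cupdot_X$. By the definition of $\deloop$ on morphisms, $\deloop$ sends such a basis element to the matrix whose only nonzero entry is $C$, placed in position $(X,Y)$ (up to the normalization $\cupdot\circ\capdot$ giving the identity on matching dot sets and zero otherwise). This needs checking through the local relations: a sphere with one black dot evaluates to $1$, an undotted sphere to $0$, and red dots are arranged so that $\cupdot_{Y'}\circ\capdot_Y=\delta_{Y,Y'}$. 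Hence $\deloop$ induces an isomorphism of graded $\Z[G]$-modules
\[
\homom_{\Cobquot(E)}(\Gamma_0,\Gamma_1)\cong\homom_{\Sigma\mathcal B(E)}(\deloop\Gamma_0,\deloop\Gamma_1),
\]
sending basis to elementary matrices. The $A_\infty$-grading on $\Cobquot(E)$ is declared by this identification, with elementary matrices graded by $|C|$ since the multiplicity spaces carry only $q$-shifts.

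\emph{Uniqueness.} If $\mathcal F_1=\deloop$ and $\mathcal F_k=0$ for $k\ge2$ form an $A_\infty$-functor, then the functor equation with all higher $\mathcal F$ vanishing reads $\mu_n^{\Sigma\mathcal B^\infty}(\deloop a_n,\dots,\deloop a_1)=\deloop\,\mu_n^{\Cobquot^\infty}(a_n,\dots,a_1)$. Since $\deloop$ is injective on hom spaces, this determines $\mu_n^{\Cobquot^\infty}$, and minimality forces $\mu_1=0$.

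\emph{Existence.} Define $\mu_n^{\Cobquot^\infty}:=\deloop^{-1}\circ\mu_n^{\Sigma\mathcal B^\infty}\circ\deloop^{\otimes n}$ on each hom space. The $A_\infty$-relations and unitality then hold automatically, because they hold in $\Sigma\mathcal B^\infty(E)$ and the conjugation is by linear isomorphisms compatible with composition of hom spaces. For $n=2$ this recovers the composition of $\Cobquot(E)$, since $\deloop$ is a functor; this is where we use that $\deloop$ is a genuine functor, i.e. $\deloop(C'\circ C)=\deloop(C')\deloop(C)$, which follows from $\sum_Y\capdot_Y\cupdot_Y=\id$ (the neck-cutting relation, i.e. $\phi_\Gamma\phi_\Gamma^{-1}=\id$ from the Delooping Lemma). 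The $q$-filtration is also preserved, since the shifts $q^{|\mathrm{circ}|-2|X|}$ exactly match the degrees of $\capdot_X,\cupdot_Y$.

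\emph{The explicit formula.} Apply the definition of $\mu^{\Sigma\mathcal C}_n$ to elementary matrices $E_{X_i,Y_i}\otimes C_i$. The product of elementary matrices vanishes unless $Y_i=X_{i+1}$, and otherwise equals $E_{X_1,Y_n}$. There are no Koszul signs, since the multiplicity spaces sit in $\hslash$-degree $0$ so every $\vartriangleleft=0$. This yields $E_{X_1,Y_n}\otimes\mu_n^{\mathcal B^\infty}(C_n,\dots,C_1)$, which pulls back to the stated expression.

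The main obstacle is bookkeeping: verifying that $\deloop$ maps the basis of Corollary \ref{cor:basis} exactly to elementary matrices, without extra red-dot or $G$ corrections, and that it is faithful. I would check this first by evaluating $\cupdot_{Y'}\circ\capdot_Y$ for a single loop using the sphere relations.
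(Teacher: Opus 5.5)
Your proposal is correct and follows essentially the same route as the paper. The paper also uses the functor equation with $\mathcal F_{k\ge 2}=0$ to force $\mu_n^{\Cobquot^\infty(E)}=\deloop^{-1}\circ\mu_n^{\Sigma\mathcal B^\infty(E)}\circ\deloop^{\otimes n}$; it writes $\deloop^{-1}$ on hom spaces as conjugation $\phi_{\Gamma_1}\circ(-)\circ\phi_{\Gamma_0}^{-1}$ by the delooping isomorphisms, and obtains the basis formula from the same identity $\cupdot_Y\circ\capdot_{Y'}=\delta_{Y,Y'}$. Your extra checks, that $\mu_2$ recovers composition via neck-cutting and that the signs $\vartriangleleft$ vanish, are details the paper leaves implicit.
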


\begin{proof}
	Unraveling the desired functoriality condition, we need $\mu_n^{\Cobquot^\infty(E)}$ to satisfy that, given any sequence of composable cobordisms $C_1,\dots,C_n$ with $C_i:\Gamma_{i-1}\to\Gamma_i$,
	\[
		\deloop\left(\mu_n^{\Cobquot^\infty(E)}(C_n,\dots,C_1)\right)
		= \mu_n^{\Sigma\mathcal B^\infty(E)}\left(\deloop(C_n),\dots,\deloop(C_1)\right).
	\]
	This forces
	\[
		\mu_n^{\Cobquot^\infty(E)}(C_n,\dots,C_1)
		= \phi_{\Gamma_n}\circ\mu_n^{\Sigma\mathcal B^\infty(E)}\left(\phi^{-1}_{\Gamma_n}\circ C_n\circ \phi_{\Gamma_{n-1}},\dots,\phi^{-1}_{\Gamma_1}\circ C_1\circ \phi_{\Gamma_0}\right)\circ\phi^{-1}_{\Gamma_0}.
	\]
	Taking this equation as the definition of higher operations in $\Cobquot^\infty(E)$, one obtains the structure of an $A_\infty$-category, with the $A_\infty$-relations following directly from those in $\Sigma\mathcal B^\infty(E)$. The $A_\infty$-grading is also inherited from $\Sigma\mathcal B^\infty(E)$. The basis expression for these operations is readily verified, making use of the observation that
	\[
		(\phi^{-1}_{\Gamma'}\circ\capdot_{Y'}\circ C \circ \cupdot_{X'}\circ\phi_{\Gamma})^{X,Y} = \cupdot_Y\circ\capdot_{Y'}\circ C \circ \cupdot_{X'}\circ\capdot_X
		=\begin{cases}
			C, & \mbox{if } X'=X \mbox{ and } Y'=Y,
			\\
			0, & \mbox{otherwise}.
		\end{cases}
	\]
\end{proof}

\section{Matrix factorizations}\label{sec:mf}

\begin{definition}
	Let $R$ be a polynomial ring over $\Z$. A \emph{matrix factorization} over $R$ with potential $W\in R$ is a $\Z/2$-graded free $R$-module $M$ together with a degree 1 $R$-module homomorphism $d_M$ such that $d_M^2=W\cdot\id$. If $R$ and $M$ carry additional gradings, we require $d_M$ to be homogeneous with respect to these additional gradings.
\end{definition}

In this paper, we always denote the $\Z/2$-grading internal to matrix factorizations by $\hslash$. If $R$ is a polynomial ring over $\Z$, we allow it to carry various integer-valued gradings, such that the variables generating $R$ are always even. We succinctly write $\mulg(x)\in (2\Z)^{\times N}$ for $\mulg$-homogeneous $x\in R$. A degree shift by $\mulg_1^{k_1}\cdots \mulg_N^{k_N}$ is thus simply denoted by $\mulg^{v}$, where $v=(k_1,\dots,k_N)$. Among these gradings, there is often a \emph{quantum} degree $q$ and a \emph{homological} degree $h$. These are such that polynomial generators live in quantum degree $q^{-2}$ and homological degree $h^0$. The potential $W$ is always required to be $\mulg$-homogeneous.

\begin{definition}

	Let $M$ and $N$ be matrix factorizations over $R$ with potential $W$. A \emph{morphism} $f: M\to N$ is a map of $R$-modules $f:M\to N$ such that $ fd_M=(-1)^{\hslash(f)}d_N f$. If it has $\hslash$-degree 0, we also call it a \emph{homomorphism}. We keep referring to maps $f: M\to N$ of the underlying $R$-modules simply as \emph{maps}.

	A \emph{homotopy} $\kappa: M\to N$ between homomorphisms $f,g:M\to N$ is an $\hslash$-degree 1 map such that $\kappa d_M+d_N\kappa =f-g$. We write $f\sim g$ when there is a homotopy between $f$ and $g$. Note that we require homomorphisms to preserve additional gradings $\mulg$, and homotopies to be homogeneous of degree $\mulg(\kappa)=-\mulg(d_M)$.

	A homomorphism $f: M\to N$ is said to be a \emph{homotopy equivalence} if it is invertible up to homotopy on both sides, i.e. if there exist homomorphisms $g_l, g_r: N\to M$ such that $fg_r\sim 1_N$ and $g_lf\sim1_M$.
\end{definition}

\begin{definition}
	Let $M$ and $N$ be matrix factorizations over $R$ with potential $W$, let $\iota:M\to N$ and $\rho:N\to M$ be homomorphisms, and let $\kappa$ be a homotopy between $\iota\rho$ and $1_N$. We say that $(M,N,\iota,\rho,\kappa)$ is a \emph{special deformation retract} if the maps satisfy the additional conditions $\rho\iota=1_M$, $\rho\kappa=0$, $\kappa\iota=0$ and $\kappa^2=0$.
\end{definition}

Notice that, in particular, $\iota$ and $\rho$ are homotopy equivalences. We will mostly work with homotopy equivalences which are special deformation retracts. Their usefulness will become clear when proving equivalences of twisted complexes of matrix factorizations in subsequent sections. We often use the color violet in diagrams such as
\[
	\vsdr{M}{N}{\iota}{\rho}{\kappa}
\]
to indicate that the provided maps constitute a special deformation retract of matrix factorizations.

Matrix factorizations over $R$ with potential $W$ and maps between them form a linear category $MF(R,W)$. If one considers the differential $\delta(f:M\to N) = d_Nf + (-1)^{\hslash(f)+1}fd_M$ and the usual functional composition $\circ$, then $MF(R,W)$ is a $\Z/2$-graded dg-category. We refrain from referring to $MF(R,W)$ as an $A_\infty$-category because the $A_\infty$-categories of Subsection \ref{sec:prelimainfinity} are $\Z$-graded. Observe also that matrix factorizations over $R$ with potential $W$ and homomorphisms between them form a linear category, which we denote by $Z^0(MF(R,W))$.

There is a tensor product operation on matrix factorizations given by $\otimes_R$ at the level of graded $R$-modules. Our convention here is that if $f$ and $g$ are homogeneous maps and $a$ and $b$ are homogeneous elements, then
\[
	(f\otimes g)(a\otimes b)= (-1)^{\hslash(g)\hslash(a)}f(a)\otimes g(b).
\]
The differential in a tensor product is given by
\[
	d_{M\otimes N}(a\otimes b) = d_M(a)\otimes b + (-1)^{\hslash(a)}a\otimes d_N(b).
\]

We write $\mulg^v\hslash^\epsilon R$ for the matrix factorization consisting of a rank 1 free $R$-module concentrated in degree $\mulg^v\hslash^\epsilon $ and vanishing potential and differential. We can shift an arbitrary matrix factorization $M$ over $R$ by tensoring $\mulg^v\hslash^\epsilon R$ on the left or on the right, operations respectively denoted by
\begin{align*}
	\mulg^v\hslash^\epsilon M &:= \mulg^v\hslash^\epsilon R\otimes_R M, \\ M \mulg^v\hslash^\epsilon &:= M\otimes_R \mulg^v\hslash^\epsilon R.
\end{align*}
In general, any shifts by $\mulg$ and right shifts by $\hslash$ do not change the sign of $d_M$, while left shifts by $\hslash$ do. The matrix factorization with the same underlying module as $M$ but opposite sign differential is denoted by $-M$.

Using the notation just established, we have that $\mulg^v\hslash^\epsilon M \cong (-1)^\epsilon M\mulg^v\hslash^\epsilon $. In particular, there is an identification
\[
	M\hslash\otimes_R N=M\otimes_R(\hslash N)=M\otimes_R(-N\hslash),
\]
which may be used to move $\hslash$-grading shifts all the way to the right when manipulating longer tensor products:
\begin{align*}
	M_1\hslash^{\epsilon_1}\otimes_R M_2\hslash^{\epsilon_2}\otimes_R\cdots \otimes_R M_r\hslash^{\epsilon_r}
	\cong (M_1\otimes_R (-1)^{\epsilon_1} M_2\otimes_R\cdots\otimes_R (-1)^{\epsilon_1+\cdots+\epsilon_{r-1}} M_r) \hslash^{\epsilon_1+\cdots+\epsilon_r}.
\end{align*}

\subsection{Koszul matrix factorizations}

Let us introduce the basic building blocks for the matrix factorizations that appear in the rest of the paper. Let $a,b\in R$ be homogeneous elements with respect to the even grading $\mulg$. The associated \emph{elementary Koszul matrix factorization} with potential $W=ab$ is defined by
\[
	\kmf{R}{a & b }
	: =
	\left[\mfshort{\mulg^{\frac12 (\mulg(a)-\mulg(b))} \hslash R}{R}{a}{b}\right]
	.
\]
As an $R$-module, we think of $\kmf{R}{a & b }$ as the exterior algebra $R\otimes\Lambda(\vartheta)$ with $R$-basis $\left\{\vartheta,1\right\}$. The generator $\vartheta$ lives in degree $\mulg^{\frac12 (\mulg(a)-\mulg(b))} \hslash^1$, while $1$ lives in degree $\mulg^{0}\hslash^0$. In the case $a=0$, the expression $\mulg(a)$ appearing in the degree of $\vartheta$ is a priori undefined, but the intended value will be clear from context in subsequent sections, and similarly for $b=0$. The differential can be written as
\[
	d_{\skmf{R}{a & b }}=\begin{pmatrix}
		0 & b \\ a & 0
	\end{pmatrix}=a\partial + b\theta.
\]
In this expression, $\partial$ and $\theta$ represent, respectively, the usual contraction (i.e. odd variable differentiation) and multiplication operators with respect to $\vartheta$ in the Koszul algebra:
\[
	\partial(c_1 1+c_2\vartheta)=c_2, \qquad \theta(c_1 1+c_2\vartheta)=c_1\vartheta.
\]
More generally, given sequences $\mathbf{\overline a}=(a_1,\dots,a_r)^t$ and $ \mathbf{\overline b}=(b_1,\dots,b_r)^t$, we have an associated Koszul factorization
\[
	\kmf{R}{\mathbf{\overline a} & \mathbf{\overline b} }
	=
	\bigotimes_i \kmf{R}{a_i & b_i },
\]
with potential $\sum_i a_ib_i$. For any shift of a Koszul matrix factorization, say $\kmf{R}{\mathbf{\overline a} & \mathbf{\overline b} }\mulg^v\hslash^\epsilon$, we have a \emph{Koszul basis} $\left\{\widehat\vartheta_1\otimes\cdots\otimes\widehat\vartheta_r\mid \widehat\vartheta_i=\vartheta_i \mbox{ or } 1\right\}$ as an $R$-module. The degree of $1\otimes\cdots\otimes 1$ is $\mulg^v\hslash^\epsilon$, while the other basis elements are shifted according to which $\vartheta_i$ appear. In practice, it is more useful to think of elements in $\kmf{R}{\mathbf{\overline a} & \mathbf{\overline b}}$ as integral polynomials in the even variables in $R$ and the odd variables $\vartheta_i$. From this perspective, the $R$-basis can be rewritten as $\left\{\vartheta_{i_1}\cdots\vartheta_{i_k}\mid 1\leq i_1<\cdots<i_k\leq r\right\}$. This notation extends to the case when the indexing set is only a subset of $\{1,\dots,r\}$.

Given two possibly shifted Koszul factorizations of the same length, we have several left multiplication and contraction operators,
\[
	\theta_i, \partial_i
	:
	\kmf{R}{\mathbf{\overline a} & \mathbf{\overline b} }\mulg^v\hslash^\epsilon
	\longrightarrow
	\kmf{R}{\mathbf{\overline a}' & \mathbf{\overline b}' }\mulg^{v'}\hslash^{\epsilon'}.
\]
In the Koszul basis,
\begin{align*}
	\theta_i(\vartheta_{i_1}\cdots\vartheta_{i_k}) &=
	\begin{cases}
		0,
		&
		\mbox{if } i\in\{i_1,\dots,i_k\},
		\\
		(-1)^{j-1}\vartheta_{i_1}\cdots\vartheta_{i_{j-1}}\vartheta_i\vartheta_{i_{j}}\cdots \vartheta_{i_k},
		&
		\mbox{if } i_{j-1}<i<i_j, \mbox{ allowing } i_0=-\infty, i_{k+1}=\infty,
	\end{cases}
	\\
	\partial_i(\vartheta_{i_1}\cdots\vartheta_{i_k})
	&=
	\begin{cases}
		(-1)^{j-1}\vartheta_{i_1}\cdots\vartheta_{i_{j-1}}\vartheta_{i_{j+1}}\cdots \vartheta_{i_k},
		&
		\mbox{if } i=i_j
		\\
		0,
		&
		\mbox{if } i\notin\{i_1,\dots,i_k\}.
	\end{cases}
\end{align*}
In the case when the two matrix factorizations do not have the same indexing set, we define $\theta_i$ to vanish when the index $i$ does not appear in the target, and we define $\partial_i$ to vanish when the index $i$ does not appear in the source.

The differential of $M=\kmf{R}{\mathbf{\overline a} & \mathbf{\overline b} }\mulg^v\hslash^\epsilon $ can be written using this notation as
\[
	d_{M}
	=
	\sum_{i=1}^r a_i\partial_i+b_i\theta_i.
\]

The following properties readily follow from the definitions.
\begin{lemma}\label{lem:fermioniccalculusformulas} The following identities of operators hold.
	\begin{align*}
		\theta_i\theta_j
		&
		= -\theta_j\theta_i,
		&
		\partial_i\partial_j
		&
		= -\partial_j\partial_i,
		\\
		\partial_i\partial_i
		&
		= 0,
		&
		\theta_i\theta_i
		&
		= 0,
		\\
		\partial_i\theta_i+\theta_i\partial_i
		&
		= 1 ,
		&
		\partial_i\theta_j
		&
		= -\theta_j\partial_i \quad\mbox{if } i\neq j,
		\\
		\partial_i\theta_i\partial_i
		&
		=\partial_i,
		&
		\theta_i\partial_i\theta_i
		&
		=\theta_i .
	\end{align*}
\end{lemma}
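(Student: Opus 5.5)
Every identity in Lemma \ref{lem:fermioniccalculusformulas} is an equality of $R$-linear maps between free $R$-modules. It therefore suffices to check each one on the Koszul basis elements $\vartheta_I=\vartheta_{i_1}\cdots\vartheta_{i_k}$, with $I=\{i_1<\cdots<i_k\}$. The plan is to first recast the defining formulas in a more usable form. Write $\sigma_i(I)=\#\{j\in I\mid j<i\}$. Then $\theta_i(\vartheta_I)=(-1)^{\sigma_i(I)}\vartheta_{I\cup\{i\}}$ if $i\notin I$ and $0$ otherwise, while $\partial_i(\vartheta_I)=(-1)^{\sigma_i(I)}\vartheta_{I\setminus\{i\}}$ if $i\in I$ and $0$ otherwise. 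In words, $\theta_i$ is left multiplication by $\vartheta_i$ in the exterior algebra, and $\partial_i$ is the left odd derivative. This is exactly the sign $(-1)^{j-1}$ of the definition, since $j-1$ counts the indices of $I$ smaller than $i$.

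The conventions for differing indexing sets ($\theta_i=0$ if $i$ is absent from the target, $\partial_i=0$ if $i$ is absent from the source) need care. For the identities to make sense, I would read them as identities of operators between Koszul factorizations on a common indexing set, where each operator is composed with a coordinate inclusion or projection. Under that reading the basis-level computation is unchanged. Degree shifts $\mulg^v\hslash^\epsilon$ do not affect the formulas on basis elements, so they can be ignored.

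Next I verify the identities one at a time.

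\begin{itemize}
\item \emph{Nilpotency.} $\theta_i\theta_i=0$ because the second application meets $i\in I$. Likewise $\partial_i\partial_i=0$.

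\item \emph{Anticommutation of two $\theta$'s.} For $i\neq j$, say $i<j$, both $\theta_i\theta_j(\vartheta_I)$ and $\theta_j\theta_i(\vartheta_I)$ equal $\pm\vartheta_{I\cup\{i,j\}}$ (or both vanish). The signs are $\sigma_j(I)+\sigma_i(I\cup\{j\})=\sigma_j(I)+\sigma_i(I)$ and $\sigma_i(I)+\sigma_j(I\cup\{i\})=\sigma_i(I)+\sigma_j(I)+1$. These differ by one, which gives anticommutation. The same count handles $\partial_i\partial_j$ and the mixed relation $\partial_i\theta_j=-\theta_j\partial_i$ for $i\neq j$. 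In the mixed case, adding or removing $j$ shifts $\sigma_i$ by one exactly when $j<i$, and adding or removing $i$ shifts $\sigma_j$ by one exactly when $i<j$, so exactly one extra sign appears. When $i=j$ these anticommutators are trivially zero by nilpotency, which covers the full statements.

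\item \emph{The Clifford relation $\partial_i\theta_i+\theta_i\partial_i=1$.} If $i\notin I$, only $\partial_i\theta_i$ contributes, and it gives $(-1)^{2\sigma_i(I)}\vartheta_I=\vartheta_I$, since $\sigma_i$ is unchanged by adding $i$. If $i\in I$, only $\theta_i\partial_i$ contributes, and it similarly returns $\vartheta_I$.

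\item \emph{The last two identities.} These follow formally from the previous ones: $\partial_i\theta_i\partial_i=(1-\theta_i\partial_i)\partial_i=\partial_i-\theta_i\partial_i\partial_i=\partial_i$, and symmetrically $\theta_i\partial_i\theta_i=\theta_i$.
\end{itemize}

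The only real obstacle is sign bookkeeping in the anticommutation relations. This is handled entirely by the observation that $\sigma_i(I\cup\{j\})=\sigma_i(I)+[j<i]$ and that $[j<i]+[i<j]=1$ for $i\neq j$.
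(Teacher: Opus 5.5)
Your proposal is correct and follows the paper's approach: the paper gives no argument beyond saying these identities ``readily follow from the definitions,'' and your check on the Koszul basis is exactly that verification, with correct sign bookkeeping via $\sigma_i(I\cup\{j\})=\sigma_i(I)+[j<i]$. Deriving the last two identities formally from nilpotency and $\partial_i\theta_i+\theta_i\partial_i=1$ is a clean shortcut.
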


When tensoring a Koszul factorization $K_{R}\begin{pmatrix} a & b \end{pmatrix}$ with some other matrix factorization $M$, we sometimes write
\begin{equation}\label{koszulexpand}
	\kmf{R}{a & b }\otimes_R M
	=
	\left[ \mfshort{\mulg^{\frac12 (\mulg(a)-\mulg(b))}\hslash M}{M}{a}{b} \right]
	.
\end{equation}
This indicates that the tensor product splits as a free $R$-module into two copies of $M$, the first one with shifted $\mulg$ and $\hslash$ degrees. In terms of this decomposition, the total differential takes the form
\[
	d_{\skmf{R}{a & b } \otimes M}
	=
	\begin{pmatrix}
		-d_M & b
		\\
		a & d_M
	\end{pmatrix}
	.
\]

Let us write down some useful isomorphisms between Koszul factorizations.

\begin{lemma}[Row operations]
	Let $\lambda\in R$ and denote the standard $i$-th unit vector by $\mathbf{\overline e}_i$. The following are isomorphisms of matrix factorizations.
	\begin{align*}
		l_{i,j}(\lambda) = 1+\lambda\partial_{i}\partial_{j}:\kmf{R}{\mathbf{\overline a} &\mathbf{\overline b} }
		& \longrightarrow
		\kmf{R}{\mathbf{\overline a}-\lambda b_i\mathbf{\overline e}_j+\lambda b_j\mathbf{\overline e}_i & \mathbf{\overline b} },
		\\
		r_{i,j}(\lambda) = 1+\lambda{\theta_i\theta_j}:\kmf{R}{\mathbf{\overline a} &\mathbf{\overline b} }
		& \longrightarrow
		\kmf{R}{\mathbf{\overline a} & \mathbf{\overline b} -\lambda a_i\mathbf{\overline e}_j+\lambda a_j\mathbf{\overline e}_i},
		\\
		m_{i,j}(\lambda) = 1+\lambda \theta_i\partial_{j}:\kmf{R}{\mathbf{\overline a} & \mathbf{\overline b} }
		& \longrightarrow
		\kmf{R}{\mathbf{\overline a}-\lambda a_i\mathbf{\overline e}_j & \mathbf{\overline b}+\lambda b_j\mathbf{\overline e}_i },
		\\
		s_i = \partial_i\theta_i - \theta_i\partial_i:\kmf{R}{\mathbf{\overline a} &\mathbf{\overline b} }
		& \longrightarrow
		\kmf{R}{\mathbf{\overline a}-2a_i\mathbf{\overline e}_i & \mathbf{\overline b} -2b_i\mathbf{\overline e}_i}.
	\end{align*}
	In the first three isomorphisms, we require $i\neq j$ and $\lambda$ to be homogeneous of suitable degree to make the isomorphism itself homogeneous.
\end{lemma}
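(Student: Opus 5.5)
Each of the four maps is an even, $R$-linear, $\mulg$-homogeneous endomorphism of the common underlying free module $R\otimes\Lambda(\vartheta_1,\dots,\vartheta_r)$; homogeneity of $\lambda$ is what makes it homogeneous. So, for each map $f$, I would show two things: that $f$ is a homomorphism, i.e. $d'f=fd$, and that $f$ is invertible with an explicit inverse. The only tools needed are the operator identities of Lemma~\ref{lem:fermioniccalculusformulas}, together with the differentials
\[
d=\sum_k a_k\partial_k+b_k\theta_k
\]
and $d'$ obtained by changing the coefficients as prescribed.

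\emph{Invertibility.} For $i\neq j$ the nilpotent parts square to zero:
\[
(\partial_i\partial_j)^2=-\partial_i\partial_i\partial_j\partial_j=0,\qquad (\theta_i\theta_j)^2=0,\qquad (\theta_i\partial_j)^2=-\theta_i\theta_i\partial_j\partial_j=0.
\]
Hence $l_{i,j}(\lambda)^{-1}=l_{i,j}(-\lambda)$, and likewise for $r_{i,j}$ and $m_{i,j}$. For $s_i$, expanding gives
\[
s_i^2=\partial_i\theta_i\partial_i\theta_i+\theta_i\partial_i\theta_i\partial_i-\partial_i\theta_i\theta_i\partial_i-\theta_i\partial_i\partial_i\theta_i=\partial_i\theta_i+\theta_i\partial_i=1,
\]
so $s_i$ is its own inverse. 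Each inverse is itself a map of the same shape, and the inverse of a homomorphism is automatically a homomorphism.

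\emph{Intertwining.} Write $f=1+\lambda N$. Then
\[
d'f-fd=(d'-d)+\lambda\,(d'-d)N+\lambda\,[d,N],
\]
and the plan is to check two facts.

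First, the term $(d'-d)N$ vanishes. In the cases $l,r,m$, the correction $d'-d$ involves only operators that kill $N$ on the left. For $l$, the correction is $\lambda(b_j\partial_i-b_i\partial_j)$, and $\partial_i\partial_i\partial_j=0$, $\partial_j\partial_i\partial_j=0$.

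Second, $d'-d=-\lambda[d,N]$. This comes from graded commutators computed with Lemma~\ref{lem:fermioniccalculusformulas}. For $l$:
\begin{itemize}
\item $\partial_i\partial_j$ commutes with every $\partial_k$, and with $\theta_k$ for $k\notin\{i,j\}$;
\item $[\partial_i\partial_j,\theta_i]=-\partial_j$ and $[\partial_i\partial_j,\theta_j]=\partial_i$.
\end{itemize}
So $[N,d]=b_j\partial_i-b_i\partial_j$, which matches exactly the change of $\mathbf{\overline a}$ in the statement. The case $r$ is the mirror computation with the roles of $\theta$ and $\partial$ exchanged. The case $m$ mixes both: $[\theta_i\partial_j,\partial_i]$ yields $-\partial_j$ with coefficient $a_i$, and $[\theta_i\partial_j,\theta_j]$ yields $\theta_i$ with coefficient $b_j$. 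These give the two simultaneous changes $\mathbf{\overline a}-\lambda a_i\mathbf{\overline e}_j$ and $\mathbf{\overline b}+\lambda b_j\mathbf{\overline e}_i$.

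For $s_i$ I would argue directly on the $\vartheta_i$ factor, using the tensor decomposition \eqref{koszulexpand}. There $s_i$ acts as $\operatorname{diag}(-1,1)$ in the basis $\{\vartheta_i,1\}$. It anticommutes with $\partial_i$ and $\theta_i$, which accounts for the sign flip of $a_i,b_i$; it commutes with all other $\partial_k,\theta_k$, since it is an even operator built from index $i$ only.

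The main obstacle is sign bookkeeping. One must check that the convention for $\theta_i,\partial_i$ in the Koszul basis, including the $(-1)^{j-1}$ factors, really yields the anticommutation rules of Lemma~\ref{lem:fermioniccalculusformulas}. One must also check that the commutator signs produce precisely $-\lambda b_i\mathbf{\overline e}_j+\lambda b_j\mathbf{\overline e}_i$ rather than the opposite sign. I would verify this last point on the rank-two case $r=2$ by writing out the $4\times4$ matrices.
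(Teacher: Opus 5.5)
Your proposal is correct and takes essentially the same route as the paper. Both arguments verify $d'f=fd$ using only the identities of Lemma~\ref{lem:fermioniccalculusformulas}, and both use $l_{i,j}(-\lambda)$, $r_{i,j}(-\lambda)$, $m_{i,j}(-\lambda)$ as inverses; your commutator bookkeeping ($(d'-d)N=0$ and $\lambda[N,d]=d'-d$) is simply a tidier organization of the paper's expand-and-cancel computation. For $s_i$, the paper checks that both sides of the intertwining relation reduce to $a_i\partial_i-b_i\theta_i$, while your parity-operator argument yields the same anticommutation and also makes $s_i^2=1$ explicit, which the paper leaves implicit.
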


\begin{proof}
	First, we verify that $l_{i,j}(\lambda)$ is a morphism of matrix factorizations by checking
	\[
		l_{i,j}(\lambda)\sum_k (a_k\partial_k+b_k \theta_k)
		=
		\left(\lambda(b_j\partial_i-b_i\partial_j)+\sum_k(a_k\partial_k+b_k \theta_k)\right)l_{i,j}(\lambda)
	\]
	Since $l_{i,j}(\lambda)=1+\lambda\partial_{i}\partial_{j}$ commutes with $\sum_{k\neq i,j} (a_k\partial_k+b_k \theta_k)$, we only need to check that
	\[
		(1+\lambda\partial_{i}\partial_{j})
		\left(a_i\partial_i+b_i\theta_i+a_j\partial_j +b_j\theta_j\right)
		=
		\left((a_i+\lambda b_j)\partial_i+b_i\theta_i+(a_j-\lambda b_i)\partial_j +b_j\theta_j\right)
		(1+\lambda\partial_{i}\partial_{j}).
	\]
	After distributing and canceling out terms appearing on both sides, as well as terms that involve $\partial_i^2$ or $\partial_j^2$, the equation boils down to
	\[
		b_i\lambda\partial_{i}\partial_{j}\theta_i+b_j\lambda\partial_{i}\partial_{j}\theta_j
		=
		b_j\lambda \partial_i - b_i\lambda \partial_j
		+b_i\lambda\theta_i\partial_{i}\partial_{j}+b_j\lambda\theta_j\partial_{i}\partial_{j}
		.
	\]
	This last identity is a consequence of the equation $1=\theta_k\partial_k+\partial_k\theta_k$. To prove that $l_{i,j}(\lambda)$ is an isomorphism, we observe that $l_{i,j}(\lambda)^{-1}=l_{i,j}(-\lambda)$. The cases of $m_{i,j}$ and $r_{i,j}$ are completely analogous.

	For $s_i$, we want to verify
	\[
		s_i\sum_k (a_k\partial_k+b_k \theta_k)
		=
		\left(-a_i\partial_i-b_i\theta_i+\sum_{k\neq i}(a_k\partial_k+b_k \theta_k)\right)s_i
	\]
	Since $s_i=\partial_i\theta_i - \theta_i\partial_i$ commutes with $\sum_{k\neq i} (a_k\partial_k+b_k \theta_k)$, the verification reduces to checking
	\[
		(\partial_i\theta_i - \theta_i\partial_i)
		\left(a_i\partial_i+b_i\theta_i\right)
		=
		\left(-a_i\partial_i-b_i\theta_i\right)
		(\partial_i\theta_i - \theta_i\partial_i).
	\]
	But from the identities in Lemma \ref{lem:fermioniccalculusformulas} it follows that both sides are equal to $a_i\partial_i-b_i\theta_i$.
\end{proof}

\begin{lemma}\label{lem:labelperm}
	Let $\mathbf{\overline a}'$ and $\mathbf{\overline b}'$ be obtained from $\mathbf{\overline a}$ and $\mathbf{\overline b}$ respectively by swapping the $i$-th and $j$-th entries: $a_i\leftrightarrow a_j$ and $b_i\leftrightarrow b_j$. Then we have a swap homomorphism $\tau_{i,j}:\kmf{R}{\mathbf{\overline a} &\mathbf{\overline b} } \to \kmf{R}{\mathbf{\overline a}' & \mathbf{\overline b}' }$ of the form
	\begin{align*}
		\tau_{i,j}
		&= \theta_i\partial_j+\theta_j\partial_i + \theta_j\theta_i\partial_j\partial_i + \partial_j\partial_i\theta_i\theta_j.
	\end{align*}
	It satisfies $\tau_{i,j}\tau_{i,j}=1$.
\end{lemma}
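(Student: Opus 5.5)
Since $\tau_{i,j}$ is an even operator built only from the odd operators $\theta_i,\partial_i,\theta_j,\partial_j$, it commutes with every $a_k\partial_k+b_k\theta_k$ for $k\neq i,j$ (each term of $\tau_{i,j}$ has even total length in odd operators, so anticommutation signs cancel). It is also homogeneous of degree zero once the shifts of $\vartheta_i$ and $\vartheta_j$ are swapped along with the entries. So the plan is to reduce everything to the rank-two factor spanned by $1,\vartheta_i,\vartheta_j,\vartheta_i\vartheta_j$ (say with $i<j$), and check both claims there.

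First I will compute $\tau_{i,j}$ on this basis using the formulas for $\theta$ and $\partial$.
\begin{itemize}
\item On $1$: only $\partial_j\partial_i\theta_i\theta_j$ survives. We have $\theta_j 1=\vartheta_j$ and $\theta_i\vartheta_j=\vartheta_i\vartheta_j$, then $\partial_i(\vartheta_i\vartheta_j)=\vartheta_j$ and $\partial_j\vartheta_j=1$. So $1\mapsto 1$.
\item On $\vartheta_j$: $\theta_i\partial_j$ gives $\vartheta_i$, and the other terms vanish. So $\vartheta_j\mapsto\vartheta_i$, and symmetrically $\vartheta_i\mapsto\vartheta_j$.
\item On $\vartheta_i\vartheta_j$: only $\theta_j\theta_i\partial_j\partial_i$ survives. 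We get $\partial_i(\vartheta_i\vartheta_j)=\vartheta_j$, then $\partial_j\vartheta_j=1$, then $\theta_i 1=\vartheta_i$, then $\theta_j\vartheta_i=-\vartheta_i\vartheta_j$. So $\vartheta_i\vartheta_j\mapsto-\vartheta_i\vartheta_j$.
\end{itemize}
Thus $\tau_{i,j}$ is the relabeling $\vartheta_i\leftrightarrow\vartheta_j$ extended as an algebra map of the exterior algebra, since $\vartheta_j\vartheta_i=-\vartheta_i\vartheta_j$. On a general Koszul monomial it acts by this relabeling, with the sign coming from reordering. From this description $\tau_{i,j}\tau_{i,j}=1$ is immediate: the relabeling is an involution, and the sign on $\vartheta_i\vartheta_j$ squares to $1$.

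Next I will verify the morphism property in the form
\[
\tau_{i,j}(a_i\partial_i+b_i\theta_i+a_j\partial_j+b_j\theta_j)=(a_j\partial_i+b_j\theta_i+a_i\partial_j+b_i\theta_j)\tau_{i,j}.
\]
The cleanest way is operator-theoretic. Because $\tau_{i,j}$ is the algebra automorphism swapping the generators, conjugation by it should exchange the indices: $\tau_{i,j}\theta_i=\theta_j\tau_{i,j}$ and $\tau_{i,j}\partial_i=\partial_j\tau_{i,j}$, and likewise with $i,j$ reversed. Multiplication operators clearly transform this way under an algebra automorphism. For the contractions, I will use that $\partial_i$ is the unique odd derivation with $\partial_i\vartheta_k=\delta_{ik}$, and check on the four basis vectors if needed.

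These four intertwining relations give the displayed identity at once. Together with the commutation with the remaining indices, this proves that $\tau_{i,j}$ is a homomorphism into the swapped Koszul factorization. The only place requiring care is the sign bookkeeping for the contraction operators when other $\vartheta_k$ with $i<k<j$ are present. I expect this to be the main, though mild, obstacle. It is handled either by the derivation argument above, or by noting that the reordering signs on both sides agree because $\tau_{i,j}$ is an even algebra automorphism.
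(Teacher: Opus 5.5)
Your proof is correct and has the same skeleton as the paper's: you show that $\tau_{i,j}$ commutes with the operators of every other index and intertwines $\theta_i\leftrightarrow\theta_j$ and $\partial_i\leftrightarrow\partial_j$, so it intertwines the two differentials. The only difference is how you check the intertwining: the paper derives it directly from the operator identities of Lemma~\ref{lem:fermioniccalculusformulas}. You instead compute $\tau_{i,j}$ on $1,\vartheta_i,\vartheta_j,\vartheta_i\vartheta_j$, identify it with the exterior-algebra automorphism $\vartheta_i\leftrightarrow\vartheta_j$, and use the odd-derivation characterization of $\partial_i$. This identification also makes $\tau_{i,j}^2=1$ immediate.
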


\begin{proof}
	It follows by using the identities in Lemma \ref{lem:fermioniccalculusformulas} to verify that $\tau_{i,j}$ intertwines $\partial_i$ with $\partial_j$ and $\theta_i$ with $\theta_j$, and so it must intertwine the differentials. The involution property is also a consequence of these identities.
\end{proof}

\subsection{Matrix factorizations associated to decorated diagrams}
In the same way that crossingless tangles in $\mathbb D^2$ define objects in the Bar-Natan category, we also want to associate objects in a category of matrix factorizations to crossingless tangles. For this purpose, however, we will have to extend our notion of crossingless tangle to include some decorations.

\begin{definition}
	A \emph{decorated diagram} is a crossingless tangle $\Gamma$ in the 2-dimensional disk $\mathbb D^2$ together with the following decorations.
	\begin{itemize}
		\item A checkerboard coloring $\sigma$ of the complement $\mathbb D^2\setminus\Gamma$.
		\item A finite indexed set $A=(\alpha_i)_{i\in I\subseteq \N}$ of non-intersecting directed arcs in the interior of the disk such that $\alpha_i\cap\Gamma=\partial \alpha_i$ transversally and $\Gamma\cup \bigcup_i\alpha_i$ is connected. We call $A$ an \emph{arc system}.
		\item A possible basepoint in $\partial \Gamma$, in which case we say that $\Gamma$ has a \emph{distinguished end}. We may write $\Gamma^*$ to emphasize the presence of a distinguished end.
	\end{itemize}
\end{definition}

In general, the ends of arcs split the crossingless tangle into segments, which are referred to as \emph{edges}. There is one variable $x_e$ associated to each edge $e$. The \emph{edge ring} $R_\Gamma$ associated to the decorated diagram $\Gamma$ is defined to be the quotient of the polynomial ring generated by $G$ and all $x_e$ with linear relations given by an alternating sum of the edge variables as one travels around each arc. If there is a distinguished end, with associated variable $x_0$, we also impose the relation $x_0=0$. The subring of $R_\Gamma$ generated by variables associated to edges intersecting $\partial \mathbb D^2$ (\emph{boundary variables}) is called the \emph{boundary ring}, denoted by $R^\partial_\Gamma$. 

A checkerboard coloring $\sigma$ of $\Gamma$ determines an alternating coloring $\partial \sigma$ of the components of $\partial \mathbb D^2 \setminus \partial \Gamma$. If the boundary variables are labeled counterclockwise by $x_0,\dots,x_{2k-1}$, we write $\sigma_1$ for the coloring in which $\partial\sigma_1$ shades the interval between $x_0$ and $x_{2k-1}$, and $\sigma_0$ for the opposite coloring. When $\Gamma=\Gamma^*$ has a distinguished end, we always associate the variable $x_0$ to the basepoint.

Given a decorated diagram $\Gamma$, we associate a matrix factorization $\mathcal M (\Gamma)$ over the edge ring $R_\Gamma$. Let us begin by indicating the elementary 4-ended decorated diagrams and their associated factorizations.
\begin{align*}
	\Gamma_i=\Elementary[extra={\node at (-0.8,0.8) {$x_0$};
		\node at (-0.8,-0.8) {$x_1$};
		\node at (0.8,-0.8) {$x_2$};
		\node at (0.8,0.8) {$x_3$};
		\node at (0,0.15) {\tiny $\alpha_i$};},]
	\longmapsto \mathcal M(\Gamma_i)&=\kmf{R}{x_0-x_1 &(x_0-x_3)(x_1+x_3+G)}
	\\
	\Gamma_i=\Elementary[extra={\node at (-0.8,0.8) {$x_0$};
		\node at (-0.8,-0.8) {$x_1$};
		\node at (0.8,-0.8) {$x_2$};
		\node at (0.8,0.8) {$x_3$};
		\node at (0,0.15) {\tiny $\alpha_i$};}, sha1=1]
	\longmapsto \mathcal M(\Gamma_i)&=\kmf{R}{x_1-x_0 &(x_0-x_3)(x_1+x_3+G)}
\end{align*}

Here, the base ring is $\Z[G]$, and the elementary matrix factorizations are over the edge ring $R_i:=R_{\Gamma_i}=R^\partial_{\Gamma_i}=\mathbf \Z[G,x_0,x_1,x_2,x_3]/(x_0-x_1+x_2-x_3)$. The variables $x_j$ and $G$ live in quantum degree $q^{-2}$. The degree of the Koszul generator $\vartheta_i$ is always $\hslash^1 q^1$ for these matrix factorizations.

In these definitions, reversing the direction of the arc has the effect of flipping the sign of both terms inside the Koszul matrix factorization, while inverting the coloring changes the sign of all the variables $x_i$ and $G$. Thus, a coloring change only flips the sign on the linear term.
\begin{lemma}[cf. \cite{kwzmutation}]
	Let $\Gamma$ be a decorated diagram with arc system $A$, and let $A'\subseteq A$ be a minimal connecting subset. Write $X_\alpha$ for the linear term in the matrix factorization associated to the arc $\alpha$, $x_c$ for the edge variable of a chosen edge in a loop $c \in \mathrm{circ}(\Gamma)$, and $x_0,\dots,x_{2k-1}$ for the boundary variables. If $\Gamma$ has no distinguished end, then
	\[
		R_{\Gamma} \cong \Z[G,X_\alpha,x_c, x_{i} \mid \alpha\in A\setminus A', c\in\mathrm{circ}(\Gamma), 0\leq i \leq 2k-2 ].
	\]
	If $\Gamma=\Gamma^*$ has a distinguished end, and $x_0$ is the distinguished variable, then
	\[
		R_{\Gamma^*} \cong \Z[G,X_\alpha,x_c, x_{i} \mid \alpha\in A\setminus A', c\in\mathrm{circ}(\Gamma), 1\leq i \leq 2k-2 ].
	\]
\end{lemma}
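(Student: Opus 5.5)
The proof is linear algebra over the graph $\Gamma\cup\bigcup_i\alpha_i$. The edge ring $R_\Gamma$ is a polynomial ring in $G$ and the edge variables $x_e$, modulo one linear relation per arc. Each relation is the alternating sum $\pm(x_{e_1}-x_{e_2}+x_{e_3}-x_{e_4})$ of the four edge variables met around the arc $\alpha$, and is homogeneous of $q$-degree $-2$. It therefore suffices to show two things. First, the relations for $\alpha\in A'$ are linearly independent over $\Z$ and can be solved integrally (unimodularly) for a set of edge variables complementary to the proposed generators. Second, for $\alpha\in A\setminus A'$, the relation is equivalent, modulo the relations of $A'$, to introducing $X_\alpha$ as a new free variable in place of one more edge variable. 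Since all relations are linear with coefficients $\pm1$, the resulting quotient is a polynomial ring as soon as each elimination step solves for a variable appearing with coefficient $\pm1$.

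I would argue by induction on $|A'|$. Contract the graph: the vertices are the components of $\Gamma$, and each arc is an edge joining the components it touches. Since $\Gamma\cup\bigcup\alpha$ is connected, $A'$ is a spanning tree of this graph. The edges of $\Gamma$, i.e.\ the segments cut out by the arc endpoints, number $2k/2+\#\mathrm{circ}(\Gamma)+2|A|$ after accounting for each open component contributing one extra segment. We work from a leaf of the tree. Take an arc $\alpha\in A'$ such that cutting $\Gamma$ along it (resolving it into the other planar smoothing) keeps the diagram connected in a controlled way. Better, use the standard surgery: ``doing'' the arc $\alpha$ merges or splits components, and the relation at $\alpha$ expresses one of the four incident edge variables through the other three with coefficient $\pm1$. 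We eliminate that variable and obtain a decorated diagram with one fewer arc. The new diagram's edges are identified so that the three surviving variables form the edges of the smoothed diagram: locally, replacing the arc by the other resolution identifies pairs of edges, and the eliminated variable is absorbed. Iterating over $A'$ ends in a diagram with no arcs. There, the edge variables are exactly one per component: the boundary variables on open components, with the pairing of ends giving the identification, and $x_c$ for each loop. The count must then produce $2k-1$ free boundary variables plus the $\#\mathrm{circ}(\Gamma)$ loop variables.

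The arcs of $A\setminus A'$ are handled separately. Each such arc closes a cycle in the tree. Its alternating sum is an independent linear form among the remaining variables, namely the linear term $X_\alpha$ itself, so rather than imposing a relation we trade one edge variable for $X_\alpha$. I need to recheck this bookkeeping, because in $R_\Gamma$ every arc imposes a relation. The correct accounting is that each arc adds two new edges, hence two new variables, and one relation, for a net gain of one free generator per arc. Along the tree, this net gain per arc exactly absorbs the disconnection between components, leaving one surplus variable overall: the $2k$ boundary variables are constrained by a single global alternating-sum relation, which is why only $x_0,\dots,x_{2k-2}$ appear. Each non-tree arc contributes exactly one extra free generator, which we may take to be $X_\alpha$, since $X_\alpha$ is a difference of two adjacent edge variables on the arc with coefficient $\pm1$ in a variable not otherwise eliminated.

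The distinguished case then follows by setting $x_0=0$, which kills one polynomial generator. The main obstacle is the global identity $\sum_i(-1)^ix_i=0$ among the boundary variables. I would prove it by summing the arc relations with signs $\pm1$ dictated by the checkerboard coloring $\sigma$, so that the interior edge variables cancel in pairs: each interior edge borders one shaded region, and the sign of an edge variable in an arc relation depends on which side is shaded. I would also show that this identity is the only relation among the $x_i$, by exhibiting explicit specializations. Verifying the unimodularity, i.e.\ that each eliminated variable appears with coefficient $\pm1$ at the moment it is eliminated, is routine once the leaf order is fixed.
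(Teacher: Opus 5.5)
\textbf{There is a genuine gap: the surgery step does not eliminate a variable.} Your checkerboard-signed sum of the arc relations, proving $\sum_i(-1)^ix_i=0$, is correct and needed. The split into tree and non-tree arcs is also the right idea. But the step meant to do the elimination fails.

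\emph{Surgery imposes two conditions, not one.} Label the four edges around an arc $\alpha$ as $e_0,e_1,e_2,e_3$, as in the elementary picture, so the arc relation is $x_{e_0}-x_{e_1}+x_{e_2}-x_{e_3}=0$. Surgery along $\alpha$ joins $e_0$ to $e_3$ and $e_1$ to $e_2$. It therefore imposes both $x_{e_0}=x_{e_3}$ and $x_{e_1}=x_{e_2}$, where the arc imposes only one condition. So the surgered diagram has edge ring $R_\Gamma/(x_{e_0}-x_{e_3})$, a proper quotient, not $R_\Gamma$ with one variable eliminated. Your ``three surviving variables'' cannot be the edge variables of a smoothing that has only two local edges.

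\emph{Surgery also leaves the class of decorated diagrams.} The surgered diagram with arcs $A\setminus\{\alpha\}$ is in general disconnected. For the elementary four-ended diagram with one arc, $R_\Gamma\cong\Z[G,x_0,x_1,x_2]$, whereas surgery yields two arc-free strands with ring $\Z[G,t,b]$.

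\emph{The terminal count is wrong.} This is why your end state (one variable per component, with the two ends of each open component identified) gives the open components only $k$ variables instead of the required $2k-1$. You notice this mismatch but never resolve it.

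\emph{The edge count is wrong.} A loop carrying $m\geq 1$ arc endpoints has $m$ edges, so $\Gamma$ has $k+2|A|$ edges. Your count would produce $|\mathrm{circ}(\Gamma)|$ spurious generators.

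\emph{The core facts are asserted, not proved.} The two facts that constitute the proof are called ``routine'': that no torsion appears, and that each tree term $X_\alpha$, $\alpha\in A'$, can be solved for with coefficient $\pm1$. Also, identifying the alternating sum of a non-tree arc with $X_\alpha$ conflates the relation, which is zero in $R_\Gamma$, with the linear term.

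\textbf{A working argument avoids surgery.}

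\emph{Jumps and per-component identities.} In $R_\Gamma$, the relation at $\alpha$ says that the jump of the edge variable across either endpoint of $\alpha$ is $\pm X_\alpha$. So along each component, every edge variable is a base variable (a boundary variable, or $x_c$) plus a signed sum of $X_\beta$'s. Traversing the whole component gives one identity: the signed sum of the $X_\beta$ met is $0$ on a loop, and equals the difference of the two end variables on an open component.

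\emph{Generation by peeling leaves.} Let $v$ be a leaf of what remains of the tree $A'$. Exactly one unsolved tree arc meets $v$, and it meets $v$ exactly once, since tree arcs join distinct components. So the identity of $v$ solves for that $X_\alpha$ with coefficient $\pm1$, in terms of non-tree $X_\beta$'s, previously solved ones, and boundary variables. Peeling leaves shows that the $X_\alpha$ ($\alpha\notin A'$), the $x_c$ and $x_0,\dots,x_{2k-1}$ generate $V=\Z^{\mathrm{edges}}/L$, where $L$ is spanned by the arc relations. Your global identity then removes $x_{2k-1}$.

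\emph{Freeness by a rank count.} Take a vanishing combination $\sum_\beta c_\beta r_\beta$. A boundary edge appears in only one relation, so propagating along the strand forces $c_\beta=0$ for every arc touching an open component. Along a loop, $|c_\beta|$ is constant over the arcs touching it. Connectivity and $k\geq 1$ then force all $c_\beta=0$. Hence $V$ has rank $(k+2|A|)-|A|=k+|A|$. This equals the number $|A|-(k+|\mathrm{circ}(\Gamma)|-1)+|\mathrm{circ}(\Gamma)|+(2k-1)$ of proposed generators, so they form a $\Z$-basis. Thus $R_\Gamma=\Z[G]\otimes\mathrm{Sym}(V)$ is the stated polynomial ring, and setting $x_0=0$ gives the based case.
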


This lemma shows, in particular, that $R_\Gamma$ is still a polynomial ring for any decorated diagram $\Gamma$. Now, let $\Gamma$ be an arbitrary decorated diagram on the disk. Each arc has a neighborhood looking like either of the two forms denoted $\Gamma_i$ above. We associate a matrix factorization
\[
	\mathcal M(\Gamma) =\bigotimes_i\mathcal M(\Gamma_i)\otimes_{ R_i} R_\Gamma
\]
over the corresponding edge ring $R_\Gamma$. The tensor product runs over all arcs $\alpha_i$ of $\Gamma$ and depends, in principle, on their labeling. However, the swap maps $\tau_{i,j}$ can be used to define canonical isomorphisms between the matrix factorizations associated to different choices of arc labeling.

For convenience, we often represent Koszul matrix factorizations associated to decorated diagrams by the diagram itself with some additional labels, namely
\begin{itemize}
	\item the shift in $q$-degree indicated by a number on the top right,
	\item the shift in $h$-degree indicated by a number on the bottom right,
	\item a possible right shift by 1 in $\hslash$-degree indicated by $\hslash$ on the bottom left,
	\item labels indicating edge variables, and
	\item arc labels.
\end{itemize}
Some of these labels can be omitted, following a few implicit conventions. When no arc labels are explicitly indicated, their labels are $\alpha_1,\dots,\alpha_r$, assigned lexicographically from top to bottom, then left to right, according to the position of the arrow tips. The boundary edge variables start from $x_0$ and go counter-clockwise. If $x_0$ is not indicated, it corresponds to the edge that intersects the boundary circle at the leftmost position in the top half. If an internal edge is not labeled, it is because its variable can be written in terms of the boundary edges and the labeled internal edges.

\begin{example}\label{ex:labelingscheme}
	We illustrate the labeling scheme with the following diagram. Take the minimal connecting set $A'=\{\alpha_1,\alpha_2\}$, so that, in principle, we can express $R_\Gamma\cong \Z[G,x_0,x_1,x_2,x_3,x_4,X_{\alpha_3}]$.

	\begin{align*}
		\Gamma&=\text{\LRL[q=3, h=0, hslash, extra={\small \node at (-0.55,0.65) {\tiny $x_0$};
				\node at (-0.55,-0.65) {\tiny $x_1$};
				\node at (0.3,-0.8) {\tiny $x_2$};
				\node at (0.7,-0.5) {\tiny $x_3$};
				\node at (0.7,0.5) {\tiny $x_4$};
				\node at (0.3,0.8) {\tiny $x_5$};
				\node at (-0.65,0) {\tiny $u$};
				\node at (0.15,0.35) {\tiny $v$};
				\node at (0.15,-0.35) {\tiny $w$};}, lab1={\tiny {$\alpha_1$}}, lab2={\tiny $\alpha_2$}, lab3={\tiny $\alpha_3$}]}
		=\text{\LRL[q=3, hslash, extra={\small \node at (-0.65,0) {\tiny $u$};}]}\qquad
		\begin{cases}
			x_5=x_0+x_2+x_4-x_1-x_3 \\
			u=X_{\alpha_3}+x_1\\
			v=x_2+x_4-x_3+X_{\alpha_3}\\
			w=x_2+X_{\alpha_3}
		\end{cases}
	\end{align*}
	The first diagram is fully labeled, while the second uses all possible implicit conventions. In practice, we favor the use of the indicated variable $u$ rather than $X_{\alpha_3}$ when doing algebraic manipulations with the explicit terms of the matrix factorization. In other words, we use the parametrization $R_\Gamma\cong \Z[G,x_0,x_1,x_2,x_3,x_4,u]$. In these variables, the associated matrix factorization is
	\begin{align*}
		\mathcal M (\Gamma)&=\kmf{R_\Gamma}{x_0-u & (x_1+x_3-x_2-x_4)(u+x_0+x_2+x_4-x_3-x_1+G) \\
			x_3-x_4& (u+x_2-x_1-x_3)(u+x_2+x_4-x_1+G) \\
			u-x_1& (x_1-x_2)(u+x_2+G)
		} q^3\hslash
	\end{align*}
	with potential $W=\sum_{i=0}^{5}(-1)^i\left(\frac13 x_i^3+\frac G 2x_i^2\right).$
\end{example}
To describe the potential for general $\Gamma$, assume that we have a counterclockwise labeling of boundary variables by $x_0,\dots, x_{2k-1}$, and that the coloring is $\sigma_\epsilon$. Then, the potential of $\mathcal M (\Gamma)$ is
\[
	W_\Gamma = \sum_{i=0}^{2k-1}{(-1)^{i+\epsilon}\left(\frac{1}{3}x^3_{i}+\frac{G}{2}x^2_i\right)}.
\]
In the based case, set $x_0=0$ in the formula. Although this expression has rational coefficients, it lies in $R^\partial$: writing $x_{2k-1}$ in terms of the other boundary variables makes it integral. Notice that all the contributions from internal edges cancel out, so that $W_\Gamma$ is completely determined by the choice of boundary variables and the coloring $\partial\sigma_\epsilon$.

\begin{remark}\label{remarkalteration}
	Alterations of a labeled decorated diagram $\Gamma$, such as coloring swap, arrow orientation reversal or relabelings of edges or arcs, have easily described algebraic counterparts. Let us think of $\mathcal M(\Gamma)$ as a polynomial algebra $\mathcal M(\Gamma)=\Z[G,x_i,\vartheta_j]_{i,j}$, where $G$ and $x_i$ are even and $\vartheta_j$ are odd, and similarly $\mathcal M(\Gamma')=\Z[G,x'_k,\vartheta'_l]_{k,l}$ for the altered diagram $\Gamma'$. Each alteration induces an algebra isomorphism $\mathcal M(\Gamma)\xrightarrow{\sim}\mathcal M(\Gamma')$:
	\begin{itemize}
		\item a coloring swap flips the sign of all even variables, including $G$,
		\item reversing orientation of the arc $\alpha_i$ flips the sign of $\vartheta_i$, and
		\item relabeling the edges or arcs maps the corresponding $x_i$ or $\vartheta_j$ to their respective $x_k'$ or $\vartheta_l'$.
	\end{itemize}
\end{remark}

\subsection{Simplifying the matrix factorization of a decorated diagram}\label{subsec:simplify}
The goal of this subsection is to realize delooping at the level of matrix factorizations. The general statement is the following.

\begin{proposition}\label{prop:mfdelooping}
	Given a decorated diagram $\Gamma$, there exists an explicit special deformation retract
	\[
		\vsdr{\bigoplus_{X\subseteq \mathrm{circ}(\Gamma)} \mathcal M(\Gamma')q^{|\mathrm{circ}(\Gamma)|-2|X|}\hslash^{|\mathrm{circ}(\Gamma)|}}{\mathcal M(\Gamma)}{}{}{}
	\]
	over $R_\Gamma^\partial$ such that each $\Gamma'$ is the crossingless matching $o(\Gamma)$ with any minimal set of connecting arcs.
\end{proposition}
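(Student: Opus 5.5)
The plan is to reduce to a small set of local moves, each realized by an explicit special deformation retract, and then compose them. Special deformation retracts are closed under composition: given $(M,N,\iota,\rho,\kappa)$ and $(N,P,\iota',\rho',\kappa')$, the triple $(\iota'\iota,\rho\rho',\kappa'+\iota'\kappa\rho')$ is again one, and the side conditions follow from those of the two factors. They are also closed under direct sums and under tensoring with a fixed matrix factorization, with the usual sign twist on $\kappa$ coming from the $\hslash$-grading. So it suffices to show two things. First, the arc system can be changed at the cost of an isomorphism. Second, a single loop can be removed by a special deformation retract onto two shifted copies of the diagram without that loop.

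\textbf{Step 1 (arc moves).} Modify the arc system of $\Gamma$ so that every loop $c$ is attached to the rest of $\Gamma$ by exactly one arc $\alpha_c$ of the minimal connecting set, possibly together with redundant arcs. Sliding an arc endpoint across another arc, or adding or removing an arc that is not in a minimal connecting set, should correspond algebraically to the row operations $l_{i,j},r_{i,j},m_{i,j},s_i$ and swaps $\tau_{i,j}$. In the parametrization of the lemma cf.\ \cite{kwzmutation}, adding a redundant arc $\alpha$ adds a factor $\kmf{R}{X_\alpha & b}$ whose linear term is a new free polynomial variable. Over the smaller ring this factor is Gaussian eliminated: $\kmf{R[X]}{X & b}$ retracts onto $R/(X)$ twisted by the remaining factorization. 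The explicit data are $\iota$ induced by the quotient splitting, $\rho$ given by setting $X=0$ and projecting off $\vartheta$, and $\kappa=\theta\,\partial_X$-type divided difference operators. I would write these formulas out and check the four side conditions, which reduce to the identities of Lemma~\ref{lem:fermioniccalculusformulas}. This step changes only isomorphism classes and removes superfluous arcs, giving minimal arc systems.

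\textbf{Step 2 (one loop).} Now focus on a loop $c$ attached by a single arc to an open edge with variable $y$, the loop carrying variable $x_c$. The local factor is $\kmf{R}{x_c-y & (\ldots)(\ldots)}$, or its color swapped form. After row operations it becomes $\kmf{R[x_c]}{x_c-y & p(x_c,y)}$, where the potential contribution of the loop cancels, so that $(x_c-y)p$ involves no new boundary terms. Since $x_c$ is a free variable of $R_\Gamma$ not in $R^\partial_\Gamma$, and $(x_c-y)\,p$ is independent of $x_c$ modulo the other factors, I expect $p$ to be (up to sign) the quadratic $\frac{(x_c^2-y^2)+G(x_c-y)}{x_c-y}$-type divided difference, that is, $p=x_c+y+G$ up to sign. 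The underlying module of the tensor product is then, over $R$ without $x_c$, free with basis $\{1,x_c\}\otimes\{1,\vartheta\}$ modulo the Koszul relation. Eliminating $x_c$ against $\vartheta$ leaves a rank-two complement $R\{1,x_c\}$ in $\hslash$-degree $1$, which matches $q^{\pm1}\hslash$ copies of $\mathcal M(\Gamma\setminus c)$. These are the matrix factorization analogues of the dotted and undotted caps, mirroring $\capdot_X$ and $\cupdot_Y$ from the Delooping Lemma. I would write $\iota$, $\rho$, $\kappa$ explicitly using the divided difference operator in $x_c$ combined with $\theta,\partial$ of that arc, then verify $\rho\iota=1$, $\kappa\iota=0$, $\rho\kappa=0$ and $\kappa^2=0$. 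The last one may need the standard correction $\kappa\mapsto\kappa d\kappa$ trick, or $\kappa' = (\kappa d\kappa)$ after adjusting $\iota,\rho$.

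\textbf{Step 3 and the main obstacle.} Iterate Step 2 over all loops, tensoring the local retracts with the identity on the remaining factors and composing. This yields the sum over $X\subseteq\mathrm{circ}(\Gamma)$ with shifts $q^{|\mathrm{circ}|-2|X|}\hslash^{|\mathrm{circ}|}$ over $R^\partial_\Gamma$. The main obstacle is Step 2, specifically producing a genuine \emph{special} retract rather than a mere homotopy equivalence. The obvious homotopy fails $\kappa^2=0$ or $\kappa\iota=0$ once the other arcs' differentials are tensored in. My first attempt would be to construct $\kappa$ as $\theta_c\circ\Delta_{x_c}$, with $\Delta$ the divided difference in $x_c$. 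If that fails, I would apply homological perturbation to the Gaussian elimination of the invertible-over-$R[x_c]/(x_c-y)$ component, which automatically produces special deformation retracts from a strong one. I would also be careful with the $\hslash$-sign bookkeeping when moving shifts to the right.
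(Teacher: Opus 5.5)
There is a genuine gap, and it is in Step~2, which is the step that is supposed to produce the pair of summands $q^{\pm1}\hslash$. Your local model is wrong. If a loop $c$ meets the arc system in a single arc $\alpha_j$, the loop is one edge, so both edges on the loop side of $\alpha_j$ carry $x_c$. The alternating relation around $\alpha_j$ then forces the two edges on the other side to be equal. Hence the linear entry vanishes identically, and the factor is $\skmf{R[x_c]}{0 & (y-x_c)(x_c+y+G)}$, up to the sign changes of Remark~\ref{remarkalteration}, exactly as in Example~\ref{ex:R1delooping}.

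A linear entry of the form $x_c-y$ occurs when an arc joins a strand to itself (Example~\ref{ex:R1arcremoval}); that is the arc-removal situation, not delooping. Your model is also internally inconsistent:
\begin{itemize}
\item A factor $\skmf{R[x_c]}{x_c-y & \pm(x_c+y+G)}$ contributes $x_c$-dependent terms to the potential. No other factor can cancel them once $c$ carries only this arc.
\item If the linear entry really were the monic $x_c-y$, Gaussian elimination (Lemma~\ref{lemma:sdrkoszullinear}) would retract onto the single copy $M/(x_c-y)M$, with $x_c$ set equal to $y$, not onto two copies.
\end{itemize}
Your final count, a rank-two piece with basis $\{1,x_c\}$ in odd degree, is the right answer. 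The mechanism you describe does not produce it.

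The missing idea is to quotient by the monic quadratic $b_j(x_c)$ itself. Because $b_j$ has leading term $\pm x_c^2$, division with remainder splits $M$ $R$-linearly into $b_jM$ and the elements of $x_c$-degree less than $2$. Lemma~\ref{sdrlemma}, in the form of Remark~\ref{sdrremark}, turns this splitting directly into a special deformation retract onto $q\hslash\,M/b_jM$. Here $\kappa=-\Quo^{x_c}_{b_j}\partial_j$, $\rho$ is essentially the projection, and $\iota$ is a residue plus a correction built from $\Quo$ and the differential. The identities $\rho\iota=1$, $\kappa\iota=0$, $\rho\kappa=0$ and $\kappa^2=0$ are immediate, so you need neither a $\kappa d\kappa$ correction nor a further perturbation argument.

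Since $x_c$ occurs in no other factor, the $R$-module isomorphism $M/b_jM\cong\bigoplus_{l=0,1}x_c^l\,(\cdots)$ is an isomorphism of matrix factorizations. (In general it suffices that the other entries have residues mod $b_j$ that are constant in $x_c$.) This gives the two summands with shifts $q^{1-2l}\hslash$ of Lemma~\ref{lemma:sdrkoszul}.

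Two smaller points about Step~1. First, you cannot make every loop a leaf at once when loops are nested, since an intermediate loop must be joined both inward and outward. Second, arc removal is a retract, not an isomorphism. The paper instead arc-slides one innermost loop at a time into the form of Example~\ref{ex:R1delooping}, deloops it, and inducts on the number of loops.
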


In particular, the matrix factorization associated to a decorated diagram $\Gamma$ does not depend on the choice of arcs $A$, at least up to homotopy equivalence. In the process, we obtain useful expressions for the homotopy equivalences that realize loop removal and some basic moves on the arc system $A$. The various examples in this section cover the different types of decorated diagrams that appear in the analysis of Reidemeister invariance.

\subsubsection{Arc slides}
Let us establish some isomorphisms that allow us to move arcs past each other in a decorated diagram.

\begin{lemma}\label{lem:arrowslide}
	The following gives a pair of mutually inverse isomorphisms of matrix factorizations.
	\begin{equation}\label{iso:longslide}
		\begin{tikzcd}[column sep=7cm]
			\ThreeHills[emp2, dir1=1, extra={\node at (-0.8,0.2) {\tiny$x_0$}; }]
			\arrow[r, shift left=0.5ex, "\psi_{1,2}:= m_{1,2}(1)r_{1,2}(x_1-x_2)=1+(\theta_1\partial_2+(x_1-x_2)\theta_1\theta_2)"]
			&
			\ThreeHills[dir1=1, emp1, extra={\node at (-0.8,0.2) {\tiny$x_0$}; }]
			\arrow[l, shift left=0.5ex, "\psi^{-1}_{1,2}:= r_{2,1}(x_1-x_2)m_{1,2}(-1)=1-(\theta_1\partial_2+(x_1-x_2)\theta_1\theta_2)"]
		\end{tikzcd},
	\end{equation}
	If the direction of the first (resp. second) arc on each diagram is flipped, then the formulas for the left-to-right and right-to-left isomorphisms are exchanged. The formulas remain the same if both arc directions are flipped in each diagram.

	The same holds for
	\begin{equation}\label{iso:tipslide}
		\begin{tikzcd}[column sep=6cm]
			\ThreeHills[emp2, dir3=1, dir1=1, extra={\node at (-0.8,0.2) {\tiny$x_0$}; }]
			\arrow[r, shift left=0.5ex, "r_{2,1}(x_0+x_3+G)=1-(x_0+x_3+G)\theta_1\theta_2"]
			&
			\ThreeHills[emp1, dir3=1, dir1=1, extra={\node at (-0.8,0.2) {\tiny$x_0$}; }]
			\arrow[l, shift left=0.5ex, "r_{1,2}(x_0+x_3+G)=1+(x_0+x_3+G)\theta_1\theta_2"]
		\end{tikzcd}.
	\end{equation}
\end{lemma}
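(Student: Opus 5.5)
The plan is to reduce everything to the Row operations lemma together with Remark \ref{remarkalteration}. For \eqref{iso:longslide}, I will first write both decorated diagrams explicitly as rank-two Koszul factorizations $\kmf{R}{a_1 & b_1\\ a_2 & b_2}$ and $\kmf{R}{a_1' & b_1'\\ a_2' & b_2'}$ over a common edge ring. Both diagrams have the same boundary variables and the same number of independent internal parameters, so the two edge rings can be identified with the same polynomial ring in the boundary variables, using the lemma on the structure of $R_\Gamma$. The internal edge variables of each diagram are then expressed through the arc relations, following the labeling conventions of Example \ref{ex:labelingscheme}.

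I then factor $\psi_{1,2}$ as $r_{1,2}(\lambda)$, with $\lambda=x_1-x_2$, followed by $m_{1,2}(1)$. By the Row operations lemma, $r_{1,2}(\lambda)$ sends the source to $\kmf{R}{a_1 & b_1+\lambda a_2\\ a_2 & b_2-\lambda a_1}$. Then $m_{1,2}(1)$ sends this to
\[
\kmf{R}{a_1 & b_1+\lambda a_2+b_2-\lambda a_1\\ a_2-a_1 & b_2-\lambda a_1}.
\]
The core of the proof is to check, as polynomial identities in the edge ring, that these entries coincide with $(a_1',b_1',a_2',b_2')$. The linear terms $a_i$ are differences of adjacent edge variables, and the $b_i$ are products of the form $(x-x')(x''+x'''+G)$. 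So I expect $a_2-a_1=a_2'$ to be immediate from the arc relations, while the two quadratic identities are short expansions in which the $G$-terms cancel.

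The displayed closed formula then follows from expanding the product:
\[
(1+\theta_1\partial_2)(1+\lambda\theta_1\theta_2)=1+\theta_1\partial_2+\lambda\theta_1\theta_2+\lambda\theta_1\partial_2\theta_1\theta_2.
\]
By Lemma \ref{lem:fermioniccalculusformulas}, $\partial_2\theta_1=-\theta_1\partial_2$ and $\theta_1^2=0$, so the last term vanishes. The inverse is handled the same way. We have $r_{2,1}(\lambda)=1+\lambda\theta_2\theta_1=1-\lambda\theta_1\theta_2=r_{1,2}(-\lambda)=r_{1,2}(\lambda)^{-1}$ and $m_{1,2}(-1)=m_{1,2}(1)^{-1}$. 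The analogous expansion gives $1-(\theta_1\partial_2+\lambda\theta_1\theta_2)$, and one can also check directly that the product with $\psi_{1,2}$ is $1$, since $(\theta_1\partial_2+\lambda\theta_1\theta_2)^2=0$.

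For the statements about arc directions, I use Remark \ref{remarkalteration}: reversing $\alpha_i$ acts by $\vartheta_i\mapsto-\vartheta_i$, so conjugation sends $\theta_i\mapsto-\theta_i$ and $\partial_i\mapsto-\partial_i$. Flipping exactly one arc negates both $\theta_1\partial_2$ and $\theta_1\theta_2$, which exchanges $\psi_{1,2}$ and $\psi_{1,2}^{-1}$. Flipping both arcs leaves both terms unchanged.

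The tip slide \eqref{iso:tipslide} is a single row operation. Write $\mu=x_0+x_3+G$. The map $r_{2,1}(\mu)$ changes only $b_1\mapsto b_1-\mu a_2$ and $b_2\mapsto b_2+\mu a_1$. So I only need to verify these two quadratic identities between the $b$-entries of the two diagrams, and the inverse is $r_{1,2}(\mu)=r_{2,1}(-\mu)$.

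I expect the main obstacle to be bookkeeping rather than any conceptual step. Choosing the edge-variable parametrizations of the two diagrams compatibly, with signs dictated by the colorings and arc directions, is exactly what makes the quadratic identities hold on the nose. If an identity fails with a sign, the first thing to re-examine is which arc is labeled $\alpha_1$ under the lexicographic convention.
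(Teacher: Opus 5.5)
Your proposal is correct and is essentially the argument the paper intends. The paper states this lemma without a separate proof: writing $\psi_{1,2}=m_{1,2}(1)\,r_{1,2}(x_1-x_2)$ and $r_{2,1}(x_0+x_3+G)$ as products of row operations is the proof, and the entry identities you defer do hold. Labeling the boundary edges counterclockwise from the indicated $x_0$, both diagrams in each slide have first linear entry $x_5-x_4$, and all remaining entries agree once you use $x_0-x_1+x_2-x_3+x_4-x_5=0$. Your conjugation-by-$s_i$ argument for arc reversal is right and applies verbatim to the tip slide. There, since $r_{2,1}$ modifies only the $b_i$, you should also record that the linear entries $a_1,a_2$ coincide on the two sides.
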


\subsubsection{Diagram simplifications}
Now, we want to express matrix factorizations associated to more complex decorated diagrams in terms of simpler ones. For us, a simple diagram is one without closed components (the underlying crossingless tangle has no loops) and with a minimal set of connecting arcs. When $\Gamma$ is not simple, the edge ring $R_\Gamma$ is larger than the boundary ring $R_\Gamma^\partial$. Example \ref{ex:labelingscheme}, for instance, exhibits a matrix factorization $\mathcal M (\Gamma)$ over $R_\Gamma=R_\Gamma^\partial[u]$, where $u$ is the variable labeling one of the internal edges. Since the potential itself is an element of the boundary ring $R_\Gamma^\partial$ (all the terms involving $u$ cancel out), it is possible to think of $\mathcal M (\Gamma)$ as a matrix factorization over $R_\Gamma^\partial$, albeit one of infinite rank. The main lemma below provides a recipe that can be specialized to obtain a homotopy equivalent finite rank matrix factorization in this type of situation.

First, let us fix some notation. Let $S$ be an integral domain, graded by quantum degree, and let $u$ be a variable in degree $q^{-2}$. Let $f(u)\in S[u]$ be a fixed polynomial in $u$ with unit leading coefficient. Let $M$ be a free $S[u]$-module with a fixed basis. For any $g\in M$ there are a well defined quotient $\Quo_f^u(g)\in M$ and residue $\Res_f^u(g)\in M$ satisfying
\[
	g= f\Quo_f^u(g)+ \Res_f^u(g),
\]
with $\deg_u(\Res_f^u(g))<\deg_u(f)$. Note that these operators are only $S$-linear. The following is our version of a result in \cite{khovanov2007virtual}, which we adapt to produce explicit deformation retract data.

\begin{lemma}\label{sdrlemma} Let $R$ be a polynomial ring over $\Z$, let $u$ be a variable in quantum degree $q^{-2}$, and let $f\in R[u]$ be a polynomial of quantum degree $q^{-2n}$ with leading term $\pm u^n$. Let $g\in R[u]$ have quantum degree $q^{-2m}$. Let $M$ be a matrix factorization over $R[u]$ with potential of the form $W=\overline W-fg\in R[u]$, such that $\overline W\in R$. Denote
	\begin{align*}
		M'
		&
		:= M/fM
		,
		\\
		\overline M
		&
		:=
		\kmf{R[u]}{f & g}\otimes_{R[u]} M
		,
	\end{align*}
	thought of as matrix factorizations over $R$ with potential $\overline W$.
	Then there is a special deformation retract
	\[
		\vsdr{M'}{\overline M}{\iota}{\rho}{\kappa}
	\]
	of the form
	\[
		\begin{tikzcd}[column sep=large, ]
			&
			M/fM
			\arrow[dd, shift right=0.6ex, "\iota_v", swap]
			\arrow[dd, leftarrow, shift left=0.6ex, "\rho"]
			\arrow[ddl, "\iota_d", bend right=20, swap]
			\\
			\\
			q^{m-n}\hslash M
			\arrow[r, "f", shift left=1ex, bend left=20, start anchor=east, end anchor=west]
			&
			M\arrow[l, "\kappa"]
			\arrow[l, "g", shift left=1ex, bend left=20, start anchor=west, end anchor=east].
		\end{tikzcd}
	\]

	The arrows are as follows:
	\begin{itemize}
		\item $\rho$ is the usual projection.
		\item $\kappa=-\Quo^u_f$.
		\item $\iota$ consists of a vertical component $\iota_v=\Res^u_f$ and a diagonal component $\iota_d=-\Quo^u_f\circ d_{M}\circ \iota_v$.
	\end{itemize}
\end{lemma}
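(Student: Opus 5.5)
The plan is to make everything explicit in the decomposition \eqref{koszulexpand}, then verify the identities one at a time. I regard the whole statement as a direct computation whose only nontrivial input is $\overline W\in R$.

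\textbf{Setup and first checks.} Write an element of $\overline M$ as a pair $(a,b)$, where $a$ lies in the shifted copy $q^{m-n}\hslash M$ and $b\in M$. By \eqref{koszulexpand}, the differential is
\[
d_{\overline M}(a,b)=(-d_Ma+gb,\; fa+d_Mb),
\]
and $d_{\overline M}^2=W+fg=\overline W$.

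Because $f$ has leading term $\pm u^n$, division by $f$ is well defined and $R$-linear (not merely $\Z$-linear). Concretely, for every $b\in M$ we have $b=f\Quo_f^u(b)+\Res_f^u(b)$. Consequently $M'=M/fM$ is a free $R$-module with basis $u^je$ for $j<n$, where $e$ runs through the given basis of $M$. The differential $d_M$ descends to $M'$ with square $W\equiv\overline W \pmod f$. Identifying $M'$ with residues, $\iota_v=\Res_f^u$ is an $R$-linear section of $\rho$, so $\rho\iota=1$ is immediate. I will also check the degree bookkeeping: $\Quo_f^u$ shifts $q$-degree by $q^{2n}$, which matches the $q^{m-n}\hslash$ shift together with the required degrees of $\iota_d$ and $\kappa$.

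\textbf{Chain map identities.} For $\rho$ this is immediate: $\rho\, d_{\overline M}(a,b)=[fa+d_Mb]=d_{M'}[b]$.

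For $\iota$, take $r$ with $\deg_u r<n$ and write $Q=\Quo_f^u(d_Mr)$.
\begin{itemize}
\item The $M$-component of $d_{\overline M}\iota(r)$ is $-fQ+d_Mr=\Res_f^u(d_Mr)=\iota_v(d_{M'}r)$.
\item The shifted component is $d_MQ+gr$. We need it to equal $\iota_d(d_{M'}r)=-\Quo_f^u\bigl(d_M\Res_f^u(d_Mr)\bigr)$. To see this, expand
\[
d_M\Res_f^u(d_Mr)=d_M^2r-f\,d_MQ=\overline Wr-fgr-f\,d_MQ .
\]
Since $\overline W\in R$ and $\deg_u r<n$, we get $\Quo_f^u(\overline Wr)=0$. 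Hence the quotient equals $-gr-d_MQ$, as required.
\end{itemize}
This second bullet is the one step I expect to need care: it is exactly where the hypothesis $\overline W\in R$ (rather than merely in $R[u]$) is used. Sign conventions from $\hslash$-shifts must also be tracked here.

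\textbf{Homotopy and side conditions.} Take $\kappa(a,b)=(-\Quo_f^u(b),0)$. Then
\[
(d\kappa+\kappa d)(a,b)=\bigl(-a-\Quo_f^u(d_Mb)+d_M\Quo_f^u(b),\;-f\Quo_f^u(b)\bigr).
\]
Applying $\Quo_f^u$ to $d_Mb=f\,d_M\Quo_f^u(b)+d_M\Res_f^u(b)$ shows that this equals $\iota\rho(a,b)-(a,b)$. So $\kappa$ is a homotopy between $\iota\rho$ and $1_{\overline M}$, with the overall sign absorbed into the convention for homotopies.

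Finally, the side conditions follow from the shape of $\kappa$.
\begin{itemize}
\item $\rho\kappa=0$, because $\kappa$ lands in the shifted copy.
\item $\kappa^2=0$, because $\kappa$ also kills the shifted copy.
\item $\kappa\iota=0$, because $\Quo_f^u(r)=0$ whenever $\deg_ur<n$.
\end{itemize}
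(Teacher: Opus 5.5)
Your proposal is correct and follows essentially the same route as the paper: both check $\rho\iota=1$, the side conditions, the homotopy identity $d_{\overline M}\kappa+\kappa d_{\overline M}=\iota\rho-1$ and the chain-map property of $\iota$ by direct computation in the decomposition \eqref{koszulexpand}, using $\overline W\in R$ at the same point. (That homotopy identity is exactly the paper's sign convention, so no sign needs absorbing.) The only cosmetic difference is that you evaluate $\iota$ on residue representatives, so that $\Quo^u_f(\overline W r)=0$, whereas the paper takes an arbitrary representative $x$ and uses $R$-linearity, $\Quo^u_f(\overline W x)=\overline W\,\Quo^u_f(x)$.
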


\begin{proof}
	The identities $\kappa\iota=0$, $\rho\kappa=0$, $\rho\iota=1$ and $\kappa^2=0$ are straightforward.
	The identity
	\[
		d_{\overline M}\kappa+\kappa d_{\overline M} +1_{\overline M}=\iota\rho
	\]
	is verified as follows. Take $(x,z)\in q^{m-n}\hslash M \oplus M=\overline M$. We compute directly, keeping in mind that $\Res^u_f(z)=z-f\Quo^u_f(z)$ and $\Quo^u_f(fz)=z$:
	\begin{align*}
		(d_{\overline M}\kappa+\kappa d_{\overline M} +1_{\overline M})(x,z)
		&
		=(-\Quo^u_f(fx)+x -\Quo^u_f(d_M(z))-d_M(-\Quo^u_f(z)), -f\Quo^u_f(z) +z)
		\\
		&
		=(-\Quo^u_f(d_M(z))+d_M(\Quo^u_f(z)), \Res^u_f(z))
		,
	\end{align*}
	while
	\begin{align*}
		\iota\rho(x,z)
		&
		=(\iota_d([z]), \iota_v([z]))
		=(-\Quo^u_f(d_M(\Res^u_f(z))),\Res^u_f(z))
		\\
		&
		=(-\Quo^u_f(d_M(z-f\Quo^u_f(z))),\Res^u_f(z))
		\\
		&
		=(-\Quo^u_f(d_M(z))+d_M(\Quo^u_f(z)),\Res^u_f(z))
		.
	\end{align*}
	It is straightforward to verify that $\rho$ is a morphism. In the case of $\iota$, we have
	\begin{align*}
		\iota(d_{M'}([x]))
		&
		=( -\Quo^u_f(d_M^2(x))+d_M(\Quo^u_f(d_{M}(x))), \Res^u_f(d_{M}(x)))
		\\
		&
		=(d_M(\Quo^u_f(d_{M}(x))) -\Quo^u_f((\overline W -fg)x), \Res^u_f(d_{M}(x)))
		,
	\end{align*}
	and
	\begin{align*}
		d_{\overline M}(\iota([x]))
		&
		=d_{\overline M}(-\Quo^u_f(d_M(x))+d_M(\Quo^u_f(x)),\Res^u_f(x))
		\\
		&
		=
		(-d_M(-\Quo^u_f(d_M(x)))-d^2_M(\Quo^u_f(x))+g\Res^u_f(x),
		\\
		&
		\qquad\qquad d_M(\Res^u_f(x))-f\Quo^u_f(d_M(x))+fd_M(\Quo^u_f(x)))
		\\
		&
		=
		(d_M(\Quo^u_f(d_M(x)))-(\overline W -fg)\Quo^u_f(x)+g(x-f\Quo^u_f(x)),
		\\
		&
		\qquad\qquad d_M(\Res^u_f(x)+f\Quo^u_f(x))-f\Quo^u_f(d_M(x)))
		\\
		&
		=
		(d_M(\Quo^u_f(d_M(x)))-\overline W\Quo^u_f(x)+gx, d_M(x)-f\Quo^u_f(d_M(x)))
		\\
		&
		=(d_M(\Quo^u_f(d_{M}(x))) -\Quo^u_f((\overline W -fg)x), \Res^u_f(d_{M}(x)))
		.
	\end{align*}
\end{proof}

\begin{remark}\label{sdrremark}
	There is a slight variation in which $f$ appears in the second column, i.e.
	\[
		\overline M
		=
		\kmf{R[u]}{g & f}\otimes_{R[u]} M.
	\]
	In this case, we have $M'=q^{n-m}\hslash M/fM$, and the special deformation retract has the form
	\[
		\begin{tikzcd}[column sep=large, ]
			q^{n-m}\hslash M/fM
			\arrow[dd, shift right=0.6ex, "\iota_v", swap]
			\arrow[dd, leftarrow, shift left=0.6ex, "\rho"]
			\arrow[ddr, "\iota_d", bend left=20, swap]
			&
			\\
			\\
			q^{n-m}\hslash M
			\arrow[r, "g", shift left=1ex, bend left=20, start anchor=east, end anchor=west]
			\arrow[r, "\kappa", swap]
			&
			M
			\arrow[l, "f", shift left=1ex, bend left=20, start anchor=west, end anchor=east],
		\end{tikzcd}
	\]
	The formulas for $\kappa$ and $\iota_v$ are the same as in the lemma, while the formula for $\iota_d$ switches sign.
\end{remark}

When all the matrix factorizations involved are Koszul, we obtain very concrete simplification formulas. Let us first deal with the linear case.

\begin{lemma}[Simplification of Koszul factorizations by a linear term]\label{lemma:sdrkoszullinear}
	Let $R$ be a polynomial ring over $\Z$ and let $\kmf{R[u]} {\mathbf{\overline a} &\mathbf{\overline b}}$ be a Koszul matrix factorization over $R[u]$ with $a_j=\pm u-c$, $\deg_u(c)=0$. Let $\mathbf{\overline a '}$ and $\mathbf{\overline b '}$ be the sequences obtained from $\mathbf{\overline a}$ and $\mathbf{\overline b}$ by deleting $a_j$ and $b_j$, respectively, and then setting $u=\pm c$. Then
	we have a special deformation retract
	\[
		\vsdr
		{\kmf{R}
			{\left.\mathbf{\overline a'}\right.
				&
				\left.\mathbf{\overline b'}\right.}}
		{\kmf{R[u]}
			{\mathbf{\overline a}
				&\mathbf{\overline b}}}
		{\sum_{i=1}^r\left(\Quo_{a_j}^u(a_i)\partial_i + \Quo^u_{a_j}(b_i)\theta_i\right)\theta_j}
		{\ev_{u=\pm c}\partial_j\theta_j}
		{-\Quo_{a_j}^u \theta_j}
		[5cm]
	\]
\end{lemma}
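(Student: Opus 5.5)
The plan is to specialize Lemma \ref{sdrlemma} (in the variant of Remark \ref{sdrremark} where needed) with $f=a_j=\pm u-c$ and $M=\bigotimes_{i\neq j}\kmf{R[u]}{a_i & b_i}$. After permuting the $j$-th factor to the front with the swap maps of Lemma \ref{lem:labelperm} (or simply by tracking signs of $\theta_j,\partial_j$ in the Koszul calculus), we have $\kmf{R[u]}{\mathbf{\overline a} & \mathbf{\overline b}}\cong \kmf{R[u]}{f & b_j}\otimes M$. The potential of $M$ is $\sum_{i\neq j}a_ib_i = W - fb_j$, so setting $g=b_j$ and $\overline W=W\in R$ puts us exactly in the lemma's hypotheses. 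Since $f$ is linear in $u$ with unit leading coefficient, $M/fM$ is obtained by evaluating at $u=\pm c$, which gives precisely $\kmf{R}{\mathbf{\overline a'} & \mathbf{\overline b'}}$ with the Koszul basis elements not involving $\vartheta_j$.

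Next I would translate the maps of Lemma \ref{sdrlemma} into operator language. In the decomposition \eqref{koszulexpand}, the summand $q^{m-n}\hslash M$ corresponds to the span of monomials containing $\vartheta_j$, and the summand $M$ to those without it. The projector onto the latter is $\partial_j\theta_j$, so $\rho=\ev_{u=\pm c}\partial_j\theta_j$. The homotopy $\kappa=-\Quo^u_f$ maps the $\vartheta_j$-free part into the $\vartheta_j$-part, so it reads $-\Quo^u_{a_j}\theta_j$ (there is no $\partial_j$ needed, since $\theta_j$ kills the $\vartheta_j$-part anyway). For $\iota$, I would view $M'$ as polynomials independent of $u$, so $\iota_v=\Res^u_f$ is just the inclusion (a $u$-free element is its own residue). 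Then $\iota_d=-\Quo^u_f\, d_M\,\iota_v$. Writing $d_M=\sum_{i\neq j}(a_i\partial_i+b_i\theta_i)$ and noting that $\Quo^u_f$ of $a_i x$ equals $\Quo^u_f(a_i)x$ for $u$-free $x$, this becomes $-\sum_{i\neq j}(\Quo(a_i)\partial_i+\Quo(b_i)\theta_i)$, placed in the $\vartheta_j$ slot by $\theta_j$.

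The main obstacle is reconciling signs. Moving $\theta_j$ past $\partial_i,\theta_i$ anticommutes, which turns $-\sum(\dots)$ composed with inserting $\vartheta_j$ into $\sum(\dots)\theta_j$. Including the $i=j$ term is harmless: $\partial_j\theta_j$ acts as the identity on $\vartheta_j$-free elements, but $\Quo^u_{a_j}(a_j)=\pm1$ produces exactly the vertical component $\iota_v$. Meanwhile $\theta_j\theta_j=0$ kills the $b_j$ term, which is consistent with the $f$-in-first-column convention, with the $\pm$ from the sign of $u$ absorbed there. I would verify this bookkeeping directly on a Koszul monomial rather than trust the abstract decomposition. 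Finally, the special deformation retract identities are inherited from Lemma \ref{sdrlemma}, and can be double-checked via Lemma \ref{lem:fermioniccalculusformulas}.
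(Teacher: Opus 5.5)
Your proposal is correct and follows the paper's own proof. The paper specializes Lemma \ref{sdrlemma} with $f=a_j$, $g=b_j$, reducing to $j=1$ via the swaps $\tau_{1,j}$; your alternative of identifying the $\vartheta_j$-part with $\theta_j$ applied to the $\vartheta_j$-free part works equally well, and your sign check for $\iota$, which the paper leaves to the reader, is right. One small correction: $\Quo^u_{a_j}(a_j)=1$ exactly, since $a_j=a_j\cdot 1+0$, not $\pm1$, and that is exactly why the $i=j$ term reproduces $\iota_v$ with the correct sign whatever the sign of $u$ in $a_j$.
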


\begin{proof}
	First assume $j=1$, in which case Lemma \ref{sdrlemma} applies directly. The formulas $\kappa=-\Quo_{a_j}\theta_1$ and $\rho=\ev_{u=\pm c}\partial_1\theta_1$ follow easily, while the verification of $\iota=\sum_{i=1}^r\left(\Quo^u_{a_1}(a_i)\partial_i + \Quo^u_{a_1}(b_i)\theta_i\right)\theta_1$ is left to the reader. The case $j\neq 1$ follows from the case $j=1$ by conjugating everywhere by swap isomorphisms $\tau_{1,j}$, which has the effect of interchanging $\theta_1\leftrightarrow \theta_j$ and $\partial_1\leftrightarrow \partial_j$.
\end{proof}

In more general cases, we want an expression $M/fM=\bigoplus_{i=0}^{k}M^{(i)}$ in which $M^{(i)}$ are simpler factorizations, e.g. coming from simpler diagrams. The lemma below gives such decompositions for Koszul factorizations in which a suitable higher order monic polynomial appears.

We make use of the following additional notation. Given $f\in R[u]$ with leading term $\pm u^n$, and $\phi\in \Z[u,x_i,\vartheta_j]_{ i\in I, j\in J}$ (variables $x_i$ and $u$ are even, $\vartheta_j$ are odd), define $P_{f}^{u,k}(\phi)$ by the identity
\[
	\Res_{f}^u(\phi) = \sum_{k=0}^{n-1} u^k P_{f}^{u,k}(\phi).
\]
In other words, $P_{f}^{u,k}$ picks up the coefficient of $u^k$ in $\Res_{f}^u(\phi)$.

\begin{lemma}[Simplification of Koszul factorizations by a higher order polynomial]\label{lemma:sdrkoszul} Let $R$ be a polynomial ring over $\Z$ and let $\kmf{R[u]} {\mathbf{\overline a} &\mathbf{\overline b}}$ be a Koszul matrix factorization over $R[u]$ such that $b_j$ has leading term $\pm u^n$ with respect to $u$, and satisfies $\deg_u(\Res^u_{b_j}(a_i))=\deg_u(\Res^u_{b_j}(b_i))=0$ for all $i\neq j$. Let $\mathbf{\overline a '}$ and $\mathbf{\overline b '}$ be the sequences obtained from $\mathbf{\overline a}$ and $\mathbf{\overline b}$ by deleting $a_j$ and $b_j$ and then replacing $a_i$ and $b_i$, respectively, by $\Res^u_{b_j}(a_i)$ and $\Res^u_{b_j}(b_i)$ for $i\neq j$. Then we have a special deformation retract
	\[
		\vsdr{ \displaystyle\bigoplus_{l=0}^{n-1}\kmf{R}{\left.-\mathbf{\overline a'}\right.
				& \left.-\mathbf{\overline b'}\right.}q^{N-2l}\hslash}
		{\kmf{R[u]}{\mathbf{\overline a} &\mathbf{\overline b}}}
		{\left(u^l\sum_{i=1}^r\left(\Quo_{b_j}^u(a_i)\partial_i + \Quo_{b_j}^u(b_i)\theta_i\right)\right)_l}
		{\left(P^{u,l}_{b_j}\partial_j\right)_l}
		{-\Quo_{b_j}^u \partial_j}
		[5.5cm],
	\]
	where $N=q(\vartheta_j)$.
\end{lemma}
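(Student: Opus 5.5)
The plan is to reduce to Lemma \ref{sdrlemma} in its variant from Remark \ref{sdrremark}, with $f=b_j$ and $g=a_j$. First assume $j=1$, conjugating by the swap isomorphisms $\tau_{1,j}$ at the end exactly as in the proof of Lemma \ref{lemma:sdrkoszullinear}. Write $M=\kmf{R[u]}{\mathbf{\overline a}_{\geq 2} & \mathbf{\overline b}_{\geq 2}}$, so that
\[
	\kmf{R[u]}{\mathbf{\overline a}&\mathbf{\overline b}}=\kmf{R[u]}{a_1 & b_1}\otimes M .
\]
The potential of $M$ is $\overline W-a_1b_1$ with $\overline W\in R$. Remark \ref{sdrremark} then gives a special deformation retract from $q^{n-m}\hslash M/b_1M$. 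In this retract, $\rho$ is projection from the $\vartheta_1$-component, i.e.\ $\rho=[\,\cdot\,]\circ\partial_1$ on Koszul polynomials. The homotopy is $\kappa=-\Quo^u_{b_1}$, mapping the $1$-component to the $\vartheta_1$-component, i.e.\ $\kappa=-\Quo^u_{b_1}\partial_1$ up to the sign conventions of $\theta_1,\partial_1$. Finally $\iota$ has vertical part $\Res^u_{b_1}$ (landing in the $\vartheta_1$ copy) and a diagonal part with the sign switched relative to Lemma \ref{sdrlemma}. Since $\hslash$ and $\vartheta_1$ carry degree $q(\vartheta_1)=N$, the shift $q^{n-m}\hslash$ matches $q^N\hslash$.

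The second step is to identify $M/b_1M$ explicitly. As an $R$-module, $R[u]/(b_1)$ is free with basis $1,u,\dots,u^{n-1}$, since $b_1$ has unit leading coefficient. The hypothesis that $\Res^u_{b_1}(a_i)$ and $\Res^u_{b_1}(b_i)$ have $u$-degree $0$ means the differential of $M/b_1M$, namely $\sum_{i\ge2}(a_i\partial_i+b_i\theta_i)$ reduced mod $b_1$, acts on $u^l\cdot K$ as multiplication by elements of $R$. Hence it preserves each summand $u^l R\otimes\Lambda(\vartheta_2,\dots,\vartheta_r)$. Each summand is isomorphic to $\kmf{R}{\mathbf{\overline a}'&\mathbf{\overline b}'}q^{-2l}$, because $u^l$ has degree $q^{-2l}$. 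The left $\hslash$ shift converts this to $\kmf{R}{-\mathbf{\overline a}'&-\mathbf{\overline b}'}$ with a right $\hslash$, using $\hslash M\cong -M\hslash$. Composing with this splitting turns $\rho$ into $(P^{u,l}_{b_1}\partial_1)_l$.

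The third step, which I expect to be the main obstacle, is computing $\iota$ in closed form. Its vertical part on the $l$-th summand is $u^l\theta_1$, since $u^l$ is already a residue. Its diagonal part is $\mp\Quo^u_{b_1}\circ d_M\circ u^l$. Here $\Quo^u_{b_1}(u^l\,\cdot)$ applied to $a_i\partial_i+b_i\theta_i$ reduces to $u^l(\Quo(a_i)\partial_i+\Quo(b_i)\theta_i)$. This works because $\Quo$ of $u^l$ times a residue of degree $0$ vanishes when $l<n$, and $\Quo$ is $R$-linear. Combining the vertical and diagonal parts, and using $b_1=\Quo^u_{b_1}(b_1)$ with $\Quo^u_{b_1}(a_1)$ for the $i=1$ term, should reproduce $u^l\sum_{i=1}^r(\Quo(a_i)\partial_i+\Quo(b_i)\theta_i)$ acting on an element without $\vartheta_1$. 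The care needed is in the sign bookkeeping from the tensor convention, where $d=\begin{pmatrix}-d_M& b\\ a& d_M\end{pmatrix}$, and in checking that the $i=1$ term $\Quo(b_1)\theta_1=\theta_1$ recovers exactly the vertical part. I would verify these signs directly on Koszul monomials using Lemma \ref{lem:fermioniccalculusformulas}.
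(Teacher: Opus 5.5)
Your proposal is correct and follows essentially the same route as the paper: apply Remark~\ref{sdrremark} with $f=b_j$ and $g=a_j$, use the degree-zero residue hypothesis to split $M/b_jM$ as $\bigoplus_{l=0}^{n-1}u^l\,\kmf{R}{\mathbf{\overline a}' & \mathbf{\overline b}'}q^{-2l}$, and compose. Your explicit computation of $\iota$ fills in a check the paper leaves implicit: the vertical part is $u^l\theta_j$, the diagonal part is $+u^l\sum_{i\neq j}(\Quo^u_{b_j}(a_i)\partial_i+\Quo^u_{b_j}(b_i)\theta_i)$, and the $i=j$ term of the stated formula recovers $\theta_j$. There are two harmless slips: $\kappa=-\Quo^u_{b_j}\partial_j$ goes from the $\vartheta_j$-component to the $1$-component, not the other way as you describe it, and the identity needed in the last step is $\Quo^u_{b_j}(b_j)=1$ rather than $b_j=\Quo^u_{b_j}(b_j)$.
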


\begin{proof}
	Similarly to the linear case, we apply Remark \ref{sdrremark} to obtain
	\[
		\vsdr{q^{N}\hslash M/fM}
		{M}
		{\sum_{i=1}^r\left(\Quo_{b_j}^u(a_i)\partial_i + \Quo_{b_j}^u(b_i)\theta_i\right)}
		{\partial_j}
		{-\Quo_{b_j}^u \partial_j}
		[5cm]
	\]
	for $M:=\kmf{R[u]}{\mathbf{\overline a} &\mathbf{\overline b}}$. The map
	\[
		\displaystyle\bigoplus_{l=0}^{n-1}q^{-2l}\kmf{R}{\mathbf{\overline a'} & \mathbf{\overline b'}}
		\xrightarrow{(u^l)_l}
		M/fM
	\]
	is an isomorphism of $R$-modules; its inverse is given by $(P_{b_j}^{u,l})_l$, and the condition $\deg_u(\Res^u_{b_j}(a_i))=\deg_u(\Res^u_{b_j}(b_i))=0$ for all $i\neq j$ guarantees that it is in fact a morphism of matrix factorizations. The special deformation retract in the statement of the lemma is obtained by composing.
\end{proof}

\subsubsection{Arc removal}
The simplification by linear polynomials in Lemma \ref{lemma:sdrkoszullinear} can be directly applied in order to remove arcs that bound an interior region of a decorated diagram. Let $\Gamma$ be a decorated diagram with its $j$-th arc $\alpha_j$ bounding an interior region, i.e. $\Gamma\cup \bigcup_{i\neq j} \alpha_i$ remains connected. Let $\Gamma'$ be the decorated diagram obtained from $\Gamma$ by removal of $\alpha_j$, so that $A_{\Gamma'}=(\alpha_1,\dots,\alpha_{j-1},\alpha_{j+1},\dots,\alpha_r)$. In particular, $R_{\Gamma'}[X_{\alpha_j}]\cong R_\Gamma$. In this situation, there is a special deformation retract of matrix factorizations over $R_{\Gamma'}$
\begin{equation}\label{sdr:interiorfaceremoval}
	\vsdr{\mathcal M(\Gamma')}{\mathcal M(\Gamma)}
	{\Quo^u_{X_{\alpha_j}}\sum_{i=1}^r(a_i\partial_i+b_i\theta_i)\theta_j}
	{\ev_{X_{\alpha_j}=0}\partial_j\theta_j}
	{-\Quo^u_{X_{\alpha_j}}\theta_j}
	[3.5cm].
\end{equation}
Here $u$ is an internal edge variable such that $R_\Gamma \cong R_{\Gamma'}[u]$ and $X_{\alpha_j}=a_j(u)$ is linear in $u$.

Let us do a few explicit examples that are useful in the proof of Reidemeister invariance.

\begin{example} \label{ex:R1arcremoval}
	A first example is the removal of an arc connecting a component to itself
	\[
		\ThinEdgeAndCircle[dir1=1,extra={\node at (0.2,0.4) {\tiny $u$};}]= \kmf{R^\partial[u]}{u-x_0 & 0}
	\]
	The boundary ring in this case is simply $R^\partial=\Z[G,x_0]$. The special deformation retract is
	\begin{equation}\label{eq:sdrthickloop}
		\vsdr{\SingleEdge}
		{[1cm] \ThinEdgeAndCircle[dir1=1]}
		{\partial_1\theta_1}
		{ \ev_{u=x_0}\partial_1\theta_1}
		{-\Quo^u_{u-x_0}\theta_1}[2cm]
	\end{equation}
\end{example}

\begin{example} \label{ex:R2arcremoval}
	Similarly, take
	\[
		\Square[dir1=1, extra={\node at (0,0.4) {\tiny $u$};}]
		=
		\kmf{R^\partial[u]}{u-x_0 & (x_1-x_0)(u+x_1+G) \\
			x_0-x_1 & (u+x_1-x_0-x_2)(x_2+u+G)}
	\]
	with boundary ring $R^\partial=\Z[G,x_0,x_1,x_2]$. The formula gives
	\begin{equation}\label{eq:sdrthickthin}
		\vsdr{\scalebox{0.8}{\Elementary}}
		{\scalebox{0.8}{\Square[dir1=1]}}
		{(\partial_1 + (u+x_1+G)\theta_2)\theta_1}{\ev_{u=x_0}\partial_1\theta_1}{-\Quo^u_{u-x_0}\theta_1}[4cm]
	\end{equation}
\end{example}

\begin{example} \label{ex:R3arcremoval}
	A third example consists of the removal of $\alpha_1$ from the diagram
	\begin{align*}
		{\RLR[extra={\node at (0.65,0) {\tiny $u$};}]}
		& = K_{R^\partial[u]}
		\begin{pmatrix}
			u-x_4
			&
			(x_0+x_2-x_1-x_3)(u+x_1+x_3-x_0-x_2-x_4+G)
			\\
			x_0-x_1
			&
			(x_1+x_3-x_2-u)(u+x_0+x_2-x_3+G)
			\\
			x_3-u
			&
			(x_2-x_3)(u+x_2+G)
		\end{pmatrix}.
	\end{align*}
	In this case, the boundary ring is $R^\partial=\Z[G,x_0,x_1,x_2,x_3,x_4]$. The resulting special deformation retract is
	\[
		\begin{tikzcd}[column sep=7cm, ampersand replacement=\&]
			{\scalebox{0.7}{\LR[lab1={\tiny $\alpha_2$}, lab2={\tiny $\alpha_3$}]}}
			\arrow[r, shift left=0.5ex, color=violet, "(\partial_1 -\partial_3-(u+x_0+2x_2+x_4-x_1-2x_3+G)\theta_2+(x_2-x_3)\theta_3)\theta_1"]
			\arrow[r, leftarrow, shift right=0.5ex, swap, color=violet, "\ev_{u=x_4}\partial_1\theta_1"]
			\&
			{\scalebox{0.7}{\RLR}}
			\arrow[loop right, distance=1em, in=15, out=-15, ,swap, color=violet, "-\Quo^u_{u-x_4}\theta_1"]
		\end{tikzcd}
	\]
\end{example}

\subsubsection{Circle removal (delooping)}

Given a decorated diagram $\Gamma$ with an arc $\alpha_j$ connecting a loop of the underlying crossingless tangle to some other component, we have a special deformation retract of the form

\begin{equation}\label{sdr:delooping}
	\vsdr{ \mathcal M(\Gamma')q\hslash \oplus \mathcal M(\Gamma')q^{-1}\hslash}
	{\mathcal M(\Gamma)}
	{\begin{pmatrix}1 & u\end{pmatrix}\Quo_{b_j}^u\sum_{i=1}^r b_i\theta_i}
	{\begin{pmatrix} P_{b_j}^{u,0} \\ P_{b_j}^{u,1} \end{pmatrix}\partial_j }
	{-\Quo_{b_j}^u \partial_j}
	[5.5cm],
\end{equation}
as long as the condition $\deg_u(\Res_{b_j}^u(a_i))=\deg_u(\Res_{b_j}^u(b_i))=0$ is satisfied for all $i\neq j$. Here, the decorated diagram $\Gamma'$ is obtained from $\Gamma$ by removing the loop via surgery along the arc $\alpha_j$, and then reversing the orientation of all remaining arcs. Since all the $a_i$ are linear and all the $b_i$ are quadratic, the condition is equivalent to asking $a_i$ to be constant in $u$ and $P_{b_j}^{u,1}(b_i)=0$ for all $i\neq j$.

\begin{example} \label{ex:R1delooping}
	The simplest form of delooping consists of removing the loop in the local picture
	\[
		\ThinEdgeAndCircle[extra={\node at (0.2,0.4) {\tiny $u$};}]
		=
		\kmf{R^\partial[u]}{0 & (x_0-u)(u+x_0+G)}
		,
	\]
	where the boundary ring is $R^\partial=\Z[G,x_0]$. By simplifying with respect to $b_1(u)=(x_0-u)(u+x_0+G)$, we are left with the special deformation retract
	\begin{equation}\label{sdr:simpleloopremoval}
		\vsdr
		{\scalebox{0.8}{\SingleEdge[q=1, hslash]}\oplus \scalebox{0.8}{\SingleEdge[q=-1, hslash]}}
		{[4cm] \ThinEdgeAndCircle}
		{\begin{pmatrix} 1 &u \end{pmatrix}\theta_1 }
		{\begin{pmatrix} P_{b_1}^{u,0} \\ P_{b_1}^{u,1} \end{pmatrix}\partial_1 }
		{-\Quo^{u}_{b_1(u)}\partial_1}
	\end{equation}
\end{example}

\begin{example}\label{ex:thinthin}
	Loops may appear in more complicated shapes, and there are different ways to deal with them. The diagram
	\[
		\Square[extra={\node at (0,0.42) {\tiny $u$};}] =
		\kmf{R^\partial[u]}{ x_0-x_1 & (x_0-u)(u+x_1+G) \\
			x_0-x_1 & (u+x_1-x_0-x_2)(u+x_2+G) }
	\]
	can be simplified by reducing with respect to the quadratic polynomial $b_2(u)=(u+x_1-x_0-x_2)(u+x_2+G)$.
	\begin{equation} \label{sdr:thinthin}
		\vsdr{\scalebox{0.8}{\Elementary[q=1, hslash, dir1=2 ]} \oplus \scalebox{0.8}{\Elementary[q=-1, hslash, dir1=2]}}
		{\scalebox{0.7}{\Square}}
		{\begin{pmatrix}1 & u \end{pmatrix}(-\theta_1+\theta_2)}
		{\begin{pmatrix} P^{u,0}_{b_2(u)} \\ P^{u,1}_{b_2(u)} \end{pmatrix}\partial_2}
		{-\Quo^{u}_{b_2(u)}\partial_2}[5cm]
	\end{equation}
\end{example}

\begin{remark}
	Let $\Gamma$ be a decorated diagram, and let $\Gamma'$ be an alteration as in Remark \ref{remarkalteration}. If we have an explicit special deformation retract simplifying the matrix factorization associated to a diagram $\Gamma$, then we can also obtain formulas for a simplification of $\Gamma'$. Indeed, the algebraic transformation rules extend to terms appearing in maps such as $\partial_j$, $P_f^{u,l}$ and $\Quo_f^u$: arc alterations affect $\partial_j$ the same way they do $\theta_j$, while coloring changes or edge relabelings alter $P_f^{u,l}$ and $\Quo_f^u$ by applying the corresponding variable transformations to $u$ and $f$.
\end{remark}

\begin{example}\label{ex:thinthintransformed}
	To illustrate the preceding remark, here is a version of Example~\ref{ex:thinthin} after inverting the coloring and the orientation of the first arc:

	\begin{equation}
		\vsdr{\scalebox{0.8}{\Elementary[sha1=1,q=1, hslash ]} \oplus \scalebox{0.8}{\Elementary[sha1=1,q=-1, hslash]}}
		{\adjustbox{scale=0.7, rotate=90, raise=-0.8cm}{\Bigon[dir1=1,dir2=3]}}
		{\begin{pmatrix}1 & -u \end{pmatrix}(\theta_1+\theta_2)}
		{\begin{pmatrix} P^{u,0}_{b_2(u)} \\ -P^{u,1}_{b_2(u)} \end{pmatrix}\partial_2}
		{-\Quo^{u}_{b_2(u)}\partial_2}[5cm].
	\end{equation}
	Here, we are flipping the signs of $\theta_1$, $\partial_1$, $G$ and all the edge variables. The sign of the original $b_2(u)=(u+x_1-x_0-x_2)(u+x_2+G) $ did not change, and we used the fact that $P^{-u,0}_{b_2(u)}=P^{u,0}_{b_2(u)}$, $P^{-u,1}_{b_2(u)}=-P^{u,1}_{b_2(u)}$ and $\Quo^{-u}_{b_2(u)}=\Quo^{u}_{b_2(u)}$. A different way of obtaining such a special deformation retract is by applying \eqref{sdr:delooping} directly. These two possibilities differ by $u\mapsto-u$.
\end{example}

\begin{example}\label{ex:circleremovalreid3}
	In the matrix factorization
	\[
		\RLR[extra={\node at (0.65,0) {\tiny $u$};}, dir1=1, dir2=2, dir3=1] =
		\kmf{R^\partial[u]}{x_0+x_2-x_1-x_3 & (u-x_4)(u+x_0+x_2+x_4-x_1-x_3+G) \\
			x_1-x_0 & (u+x_2-x_1-x_3)(u+x_0+x_2-x_3+G) \\
			x_2-x_3 & (x_3-u)(u+x_2+G)}
		,
	\]
	there is no choice of arc $\alpha_j$ so that the terms $b_i$ in the second column fulfill the condition $P_{b_j}^{u,1}(b_i)=0$ for all $i\neq j$. Instead, we simplify this matrix factorization by first using an arc slide isomorphism
	\[
		\begin{tikzcd}[column sep=2cm]
			\RLR[dir1=1, dir2=2, dir3=1]
			\arrow[r, shift left=0.5ex, "\psi_{2,3}=1+(\theta_2\partial_3+(x_3-u)\theta_2\theta_3)"]
			&[1.7cm]
			\RLR[emp2, dir1=1, dir3=1, extra={\draw[dotted, dash pattern=on 1pt off 1pt, {<[scale=0.8]}-]
				(-0.65,-0.2)
				.. controls (-0.45,-0.3) and (-0.3,-0.7)
				.. (-0.1,-0.8);}]
			\arrow[l, shift left=0.5ex, "\psi_{2,3}^{-1}=1-(\theta_2\partial_3+(x_3-u)\theta_2\theta_3)"]
		\end{tikzcd}
	\]
	to obtain the right-hand side diagram, to which \eqref{sdr:delooping} applies similarly to \eqref{sdr:thinthin}:
	\[
		\vsdr{\scalebox{0.8}{\RL[dir1=3, hslash, q=1]}\oplus\scalebox{0.8}{\RL[dir1=3, hslash, q=-1]}}
		{
			\RLR[emp2, dir1=1, dir3=1, extra={\draw[dotted, dash pattern=on 1pt off 1pt, {<[scale=0.8]}-]
				(-0.65,-0.2)
				.. controls (-0.45,-0.3) and (-0.3,-0.7)
				.. (-0.1,-0.8);}]
		}
		{\begin{pmatrix} 1 &u \end{pmatrix}(-\theta_1+\theta_3) }
		{\begin{pmatrix} P_{(x_3-u)(u+x_0+x_2-x_1+G)}^{u,0} \\ P_{(x_3-u)(u+x_0+x_2-x_1+G)}^{u,1} \end{pmatrix}\partial_3 }
		{\Quo^{u}_{(u-x_3)(u+x_0+x_2-x_1+G)}\partial_3}
		[4cm]
	\]
	After suitable compositions and simplifications, we obtain
	\[
		\vsdr{\scalebox{0.8}{\RL[dir1=3, hslash, q=1]}\oplus\scalebox{0.8}{\RL[dir1=3, hslash, q=-1]}}
		{\RLR[dir1=1, dir2=2, dir3=1]}
		{\begin{pmatrix} 1 &u \end{pmatrix}(-\theta_1-\theta_2+\theta_3+(x_3-u)\theta_1\theta_2\theta_3) }
		{\begin{pmatrix} P_{(x_3-u)(u+x_0+x_2-x_1+G)}^{u,0} \\ P_{(x_3-u)(u+x_0+x_2-x_1+G)}^{u,1} \end{pmatrix}(\partial_3+(u-x_3)\theta_2\partial_3\theta_3) }
		{\kappa}
		[7cm],
	\]
	where $\kappa=\psi_{2,3}^{-1}\Quo^{u}_{(u-x_3)(u+x_0+x_2-x_1+G)}\partial_3 \psi_{2,3}$.
\end{example}

We can leverage these ideas to obtain explicit special deformation retracts delooping at the level of matrix factorizations for general decorated diagrams, as stated in Proposition \ref{prop:mfdelooping}.

\begin{proof}[Proof of Proposition \ref{prop:mfdelooping}]
	First, do a sequence of arc removals to $\Gamma$ to obtain a deformation retract into a decorated diagram $\Gamma''$ with the same underlying 1-manifold, but with a minimal set of arcs. Now, given any loop in $\Gamma''$ not enclosing any other loops, it is always possible to do a sequence of arc slides to put it in the form appearing in Example \ref{ex:R1delooping}, and then apply the special deformation retract appearing there. This will create two summands with identical decorated diagrams with shifts $q\hslash$ and $q^{-1}\hslash$, respectively. The desired result is obtained by doing this process inductively, one loop at a time, with the number of summands doubling at each step. Observe that arc slide isomorphisms, arc removal deformation retracts and delooping special deformation retracts all admit explicit formulas, according to Lemma \ref{lem:arrowslide} and formulas \eqref{sdr:interiorfaceremoval} and \eqref{sdr:delooping}, respectively.
\end{proof}

\section{The extended tangle invariant}\label{sec:multif}

Now that we have established how to associate matrix factorizations to decorated crossingless tangles, we proceed to associate a matrix multifactorization $\mathcal M(D)$, a sort of cube-of-resolutions complex in $MF(R,W)$, to a suitably decorated tangle diagram $D$, in analogy to the definition of the Bar-Natan bracket $[\![D]\!]$. At the end of this section we focus on the 4-ended case, where this analogy is taken further to describe how $\mathcal M(D)$ is used to obtain an extension of the Bar-Natan invariant $\De(T)\in Ch(\mathcal B(E^4_*))$ into a bifiltered twisted complex $\De^\infty (T)$ over $\mathcal B^\infty_{ds}$.

\begin{definition}
	Let $R$ be a polynomial ring over $\Z$ graded by quantum degree $q$ and homological degree $h$, so that all polynomial generators live in degree $q^{-2}h^0$. A \emph{matrix multifactorization} over $R$ with potential $W\in R$ is a graded free $R$-module $M$ equipped with an additional $\Z/2$-grading $\hslash$ and with an $R$-module homomorphism $d_M$ of the form $d_M=\sum_{i=0}^\infty d_i$, with $d_i$ homogeneous of degree $q^{3(i-1)}h^i\hslash$, satisfying $d_M^2=W\cdot\id$. We require $M$ to be bounded in $h$-degree. We call $d_0$ the \emph{vertical component} of $d_M$, and we define the \emph{underlying} or \emph{vertical matrix factorization} of $M$ to be $\underline M=(M,d_0)$.
\end{definition}

If we overlook momentarily the lack of an integral $A_\infty$-grading on $MF(R,W)$, then a matrix multifactorization $M$ can be thought of, in the language of Subsection \ref{sec:prelimainfinity}, as a bifiltered twisted complex over $MF(R,W)$, where $h$ is the homological degree in the sense of twisted complexes. From this perspective, the vertical matrix factorization $\underline M$ decomposes into a direct sum of components concentrated in fixed homological degree, and the remaining components of $d_M$ form the twisted differential $\delta=d_1+d_2+\cdots$.

The notions of $n$-(homo)morphism, $n$-homotopy and $n$-homotopy equivalence are defined as for twisted complexes. Additionally, we define a \emph{vertical isomorphism} to be an invertible $0$-homomorphism that is homogeneous of $h$-degree $h^0$, i.e. only the vertical component is possibly nonzero. We reserve the notation $M\cong M'$ to indicate vertically isomorphic multifactorizations.

\begin{definition}
	A \emph{decorated diagram} for an oriented tangle $T$ is an oriented tangle diagram $D$ in the 2-dimensional disk $\mathbb D^2$ together with
	\begin{itemize}
		\item a checkerboard coloring of the complement $\mathbb D^2\setminus D$,
		\item a finite indexed set $(\alpha_j)_{j\in I_{a}\subset \N}$ of non-intersecting directed arcs in the interior of the disk and away from the crossings, such that $\alpha_j\cap D=\partial \alpha_j$ transversally and $D\cup \bigcup_j\alpha_j$ is connected,
		\item a finite set $I_{c}\subseteq \N\setminus I_{a}$ indexing the crossings in $D$, and
		\item a possible choice of distinguished end in $\partial D$, in which case we may write $D^*$ to indicate the presence of such end. An asterisk $*$ may be placed near the distinguished end to graphically indicate it.
	\end{itemize}
\end{definition}

We denote the total indexing set by $I:=I_a\cup I_c$. Given a decorated tangle diagram, we associate an elementary matrix multifactorization $\mathcal M(D_i)$ to each local picture $D_i$ around a crossing with label $i\in I_c$:

\[
	D_i=\Elementary[dir1=1, cr1, extra={\node at (0.25,0) {\tiny $i$};}]
	\longmapsto
	\mathcal M (D_i)=\left[
	\begin{tikzcd}[column sep=2cm]
		\scalebox{0.8}{\Elementary[q=1, hslash, extra={\node at (0,0.1) {\tiny $\alpha_i$};} ] }
		\arrow[r, "\partial_i+(x_1+x_3+G)\theta_i"]
		&
		\scalebox{0.8}{\Elementary[dir1=1, q=2, h=1, hslash, extra={\node at (0.2,-0.05) {\tiny $\alpha_i$};} ]}
	\end{tikzcd}
	\right]_,
\]

\[
	D_i=\Elementary[dir1=1,cr1=-1, extra={\node at (0.25,0) {\tiny $i$};}]
	\longmapsto
	\mathcal M (D_i)=\left[
	\begin{tikzcd}[column sep=2cm]
		\scalebox{0.8}{\Elementary[q=-2, h=-1, dir1=1, hslash, extra={\node at (0.2,-0.05) {\tiny $\alpha_i$};}] }
		\arrow[r, "\partial_i+(x_1+x_3+G)\theta_i"]
		&
		\scalebox{0.8}{\Elementary[q=-1, hslash, extra={\node at (0,0.1) {\tiny $\alpha_i$};} ]}
	\end{tikzcd}
	\right]_.
\]
One may also find the inverted coloring at a crossing. As before, a coloring inversion entails a sign flip on the even variables. The associated multifactorizations are
\[
	D_i=\Elementary[dir1=1, cr1, sha1=1, extra={\node at (0.25,0) {\tiny $i$};}]
	\longmapsto
	\mathcal M (D_i)=\left[
	\begin{tikzcd}[column sep=2cm]
		\scalebox{0.8}{\Elementary[q=1, sha1=1, hslash, extra={\node at (0,0.1) {\tiny $\alpha_i$};} ] }
		\arrow[r, "\partial_i-(x_1+x_3+G)\theta_i"]
		&
		\scalebox{0.8}{\Elementary[dir1=1, q=2, h=1, sha1=1, hslash, extra={\node at (0.2,-0.05) {\tiny $\alpha_i$};}]}
	\end{tikzcd}
	\right]_,
\]

\[
	D_i=\Elementary[dir1=1,cr1=-1, sha1=1, extra={\node at (0.25,0) {\tiny $i$};}]
	\longmapsto
	\mathcal M (D_i)=\left[
	\begin{tikzcd}[column sep=2cm]
		\scalebox{0.8}{\Elementary[q=-2, h=-1, dir1=1, sha1=1, hslash, extra={\node at (0.2,-0.05) {\tiny $\alpha_i$};}] }
		\arrow[r, "\partial_i-(x_1+x_3+G)\theta_i"]
		& \scalebox{0.8}{\Elementary[q=-1, sha1=1,hslash, extra={\node at (0,0.1) {\tiny $\alpha_i$};} ]}
	\end{tikzcd}
	\right]_.
\]
In all four cases, the horizontal arrow is called a \emph{saddle map}, and will be denoted by $S_i$.

Now, given a decorated tangle diagram $D$, we associate to it a matrix multifactorization $\mathcal M(D)$ over $R^\partial_D$ given by the tensor product
\begin{equation}\label{eq:defmultifact}
	\mathcal M (D) := \bigotimes_{i\in I} \left(\mathcal M (D_i)\otimes_{R_{i}} R_D\right),
\end{equation}
where $R_i$, $R_D$ and $R^\partial_D$ are defined as for decorated planar diagrams. Notice that $R_D= R_{\Gamma}$ for any resolution $\Gamma$ of $D$. Setting $E= D\cap\partial \mathbb D^2$, the coloring $\sigma$ of $D$ induces an alternating coloring $\partial\sigma$ of the components in $\partial \mathbb D^2 \setminus E$.

\begin{remark}
	Let us compare our conventions to the multifactorizations $M(-)$ associated to tangle diagrams in \cite{kwzmutation}. In \cite{kwzmutation}, tangles admit resolutions into webs, in which arcs have thickness rather than orientation, and edges are oriented and signed in compatible ways around each arc. The edge sign assignment should be thought of as a counterpart to our checkerboard colorings. Here is the correspondence of the matrix factorizations associated to elementary local pictures:

	\[
		\mathcal M\left(\scalebox{0.8}{$\Elementary$} \right)= M\left(\scalebox{0.8}{$\Elementary[thin1, extra={\node at (-0.28,0.58) {\tiny +};}]$}\right) = M\left(\scalebox{0.8}{$\Elementary[thick1, extra={\node at (0.3,0.56) {\tiny --};}]$}\right)\hslash,
	\]

	\[
		\mathcal M\left(\scalebox{0.8}{$\Elementary[dir1=1]$} \right)= M\left(\scalebox{0.8}{$\Elementary[thin1, dir1=1, extra={\node at (-0.3,-0.56) {\tiny --};}]$}\right) = M\left(\scalebox{0.8}{$\Elementary[thick1, dir1=1, extra={\node at (-0.3,0.56) {\tiny +};}]$}\right)\hslash.
	\]
	The elementary multifactorizations associated to positive crossings satisfy

	\[
		\mathcal M\left(\scalebox{0.8}{$\Elementary[cr1, dir1=1]$} \right)= M\left(\scalebox{0.8}{$\Elementary[cr1=10, dir1=1, sha1=1, extra={\node at (-0.28,0.58) {\tiny +};}]$}\right)_,
		\qquad
		\mathcal M\left(\scalebox{0.8}{$\Elementary[cr1, sha1=1, dir1=1]$} \right)\cong M\left(\scalebox{0.8}{$\Elementary[cr1=10, dir1=1, extra={\node at (-0.28,0.58) {\tiny --};}]$}\right)_,
	\]
	and similarly for negative crossings. As a consequence of these identifications, if $D^*$ is a connected diagram for a tangle $T$ with boundary $E_*^4$, and $D^*_{\sigma_0}$ is the decorated tangle diagram obtained by coloring by $\sigma_0$, it follows that there is a vertical isomorphism $\mathcal M (D^*_{\sigma_0})\cong M(D^*)$ of matrix multifactorizations.

\end{remark}

Now, recall that, in the Bar-Natan setting, we defined the subcategory $\mathcal B(E)$ whose objects are representatives of each isotopy class of crossingless matchings. The following definition is the counterpart to $\mathcal B(E)$ in the setting of matrix factorizations.

\begin{definition}
	Let $E\subseteq \partial \mathbb D^2$ be finite and possibly based, and let $\partial\sigma$ be an alternating coloring of $\partial \mathbb D^2\setminus E$. Write $R^\partial$ and $W$ for the associated boundary ring and potential, respectively. For each $\Gamma\in Ob(\mathcal B(E))$, consider the checkerboard coloring $\sigma(\Gamma)$ compatible with $\partial\sigma$, and fix a minimal set of arcs, thus obtaining a decorated diagram $\widetilde \Gamma$. Denote the number of shaded components in $\mathbb D^2\setminus \Gamma$ by $|\sigma(\Gamma)|$. We define $\mathcal A(E,\partial\sigma)$ to be the full subcategory of $MF(R^\partial, W)$ generated by the matrix factorizations of the form $\mathcal M(\widetilde \Gamma)\hslash^{|\sigma(\Gamma)|}$, with $\Gamma$ running through all objects in $\mathcal B(E)$.
\end{definition}

The definition does not depend on the choice of arc system, up to isomorphism of $\Z/2$-graded dg-algebras: different arc systems for the same underlying crossingless tangle are related by sequences of slide isomorphisms. The algebra $\mathcal A(E,\partial\sigma)$ is a more manageable setting than $MF(R^\partial,W)$ to define integral lifts of the $\hslash$-grading, thus getting an actual dg-algebra structure. In the case of $E=E_*^4$ with the standard coloring $\partial\sigma_0$, we have the generating idempotents
\[
	\mathcal M_\circ:=\Elementary[extra={\node at (-0.62,0.6) {*};}] \qquad \mbox{and} \qquad \mathcal M_\bullet :=\Elementary[dir1=1, hslash, extra={\node at (-0.62,0.6) {*};}]
\]
for $\mathcal A (E^4_*,\partial \sigma_0)$. These are matrix factorizations over the ring $\Z[x,y,z]$ with potential $W=xyz$, where $x=x_1$, $y=x_3$ and $z=x_1+x_3+G$. The integral $A_\infty$-grading on $\mathcal A (E^4_*,\partial \sigma_0)$ we work with is defined as follows. Declare the generators of the coefficient ring to have degrees $|y|=2$ and $|x|=|z|=0$, and the elements in the bases $\{\vartheta_\circ,1_\circ\}\subseteq \mathcal M_\circ$ and $\{\vartheta_\bullet,1_\bullet\}\subseteq \mathcal M_\bullet$ to have degrees $|\vartheta_\circ|=|1_\bullet|=-1$ and $|1_\circ|=|\vartheta_\bullet|=0$.

Once $\mathcal A(E,\partial \sigma)$ is endowed with an $A_\infty$-grading lifting $\hslash$, any multifactorization $(M,d_0+d_1+\cdots)$ satisfying $\underline M\cong \bigoplus_{i\in I} \mulg^{v_i}\hslash^{\epsilon_i} M_i$, with $M_i\in\mathcal A(E,\partial \sigma)$, can be seen as a bifiltered twisted complex in $Tw^h\mathcal A(E,\partial \sigma)$. The twisted differential is $\delta=d_1+d_2+\cdots$. Notions such as $n$-homomorphism, $n$-homotopy and $n$-homotopy equivalence coincide across the two perspectives. With this in mind, we want to deloop the multifactorization $\mathcal M(D)$ associated to a tangle diagram $D$ by replacing it by a 0-homotopy equivalent bifiltered twisted complex over $\mathcal A(E,\partial \sigma)$. The equivalence in question is of a special form.

\begin{definition}
	Let $(M,d)$ and $(M',d')$ be matrix multifactorizations. A \emph{special} $n$\emph{-deformation retract} from $M$ to $M'$ consists of 0-homomorphisms $\rho:M\to M'$ and $\iota:M'\to M$ such that $\rho\iota=1$, together with an $n$-homotopy $\kappa:M\to M$ between $\iota\rho$ and $1$ such that $\kappa\iota=0$, $\rho\kappa=0$ and $\kappa^2=0$. We often represent this data by a triple $(\iota,\rho,\kappa)$ or, more suggestively, by a diagram
	\[
		\sdr{(M',d')}{(M,d)}{\iota}{\rho}{\kappa}
		.
	\]
\end{definition}

\begin{proposition}\label{prop:deloopmulti}
	Let $D$ be a decorated tangle diagram with boundary $E$ and coloring $\sigma$. Fix an integral lift of $\hslash$ enhancing $\mathcal A(E,\partial\sigma)$ into a dg-algebra. Then the multifactorization $\mathcal M (D)$ has an explicit special 0-deformation retract into an object $\mathcal M'(D)\in Tw^{h}\mathcal A(E,\partial\sigma)$.
\end{proposition}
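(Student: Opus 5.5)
The plan is to deloop vertex by vertex and then push the resulting data through the whole multifactorization with the homological perturbation lemma, using the $h$-filtration to make all sums finite. Since $\mathcal M(D)$ is a tensor product over crossings and arcs, its underlying vertical factorization splits as
\[
	\underline{\mathcal M(D)}=\bigoplus_{s}\mulg^{v_s}\hslash^{\epsilon_s}\mathcal M(\Gamma_s),
\]
where $s$ runs over total resolutions and $\Gamma_s$ is the decorated resolution whose arcs are those of $D$ together with one arc per crossing. The vertical differential $d_0$ preserves each summand. The rest of the differential, $\delta=d-d_0$, is built from saddle maps $S_i$ with signs; it has $h$-degree $\geq 1$ and is strictly upper triangular for the filtration $\mathsf F_p$.

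First, for each $s$ I invoke Proposition~\ref{prop:mfdelooping}. This gives an explicit special deformation retract $(\iota_s,\rho_s,\kappa_s)$ from $\mathcal M(\Gamma_s)$ onto a direct sum of shifted copies of $\mathcal M(\widetilde{o(\Gamma_s)})$. The arc system on the target can be chosen to be the fixed minimal one from the definition of $\mathcal A(E,\partial\sigma)$, since slide isomorphisms from Lemma~\ref{lem:arrowslide} are special deformation retracts with $\kappa=0$ and compose with the others. I also need to check that the $\hslash$-shifts produced, $\hslash^{|\mathrm{circ}|}$ together with the $\hslash$ on each crossing arc and the coloring normalization $\hslash^{|\sigma(\Gamma)|}$, match those of the generators up to an overall shift. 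Summing over $s$ gives a special deformation retract $(\iota_0,\rho_0,\kappa_0)$ of the vertical factorizations. All three maps are homogeneous of $h$-degree $0$, so this is a special $0$-deformation retract for $d_0$ alone.

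Second, I perturb by $\delta$. The perturbed data are
\begin{align*}
	\delta' &= \sum_{k\ge0}\rho_0\,\delta(\kappa_0\delta)^k\iota_0, &
	\iota &= \sum_{k\ge0}(\kappa_0\delta)^k\iota_0, \\
	\rho &= \sum_{k\ge0}\rho_0(\delta\kappa_0)^k, &
	\kappa &= \sum_{k\ge0}\kappa_0(\delta\kappa_0)^k.
\end{align*}
The sums are finite because $\delta$ raises $h$ by at least $1$, $\kappa_0$ preserves $h$, and $\mathcal M(D)$ is bounded in $h$. The standard perturbation lemma argument then gives the following, and I will check each point:
\begin{itemize}
	\item $(d_0'+\delta')^2=W$ on $\mathcal M'(D)$, where $d_0'$ is the restricted vertical differential;
	\item $\iota$ and $\rho$ are morphisms with $\rho\iota=1$;
	\item $\kappa$ is a homotopy from $\iota\rho$ to $1$;
	\item the side conditions $\kappa\iota=0$, $\rho\kappa=0$ and $\kappa^2=0$ hold.
\end{itemize}
The side conditions follow from those of $(\iota_0,\rho_0,\kappa_0)$ together with $\kappa_0^2=0$. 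The matrix-factorization version works because $W$ is central and $d_0^2=W$ is the only non-nilpotent piece. This is the lemma in the curved setting, which I would verify directly from the algebraic identities rather than quote. Every term of $\iota$, $\rho$ and $\kappa$ has $h$-degree $\geq 0$, so all three are $0$-morphisms. The term of $\delta'$ containing $k$ copies of $\kappa_0$ has $h$-degree $\geq k+1$. So $\delta'$ is strictly upper triangular, and $\mathcal M'(D)$ is a bifiltered object.

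Third, and this is the step I expect to be the main obstacle, I must verify the bigradings. The $n$-th $h$-component of $\delta'$ has to be $(\hslash,q)$-homogeneous of degree $\hslash^1q^{3(n-1)}$ once the chosen integral lift is imposed. The component of $\iota,\rho,\kappa$ of $h$-degree $h^k$ has to have degree $q^{3k}$, respectively $q^{3(k+1)}$ for $\kappa$. For $q$ this is bookkeeping: $\delta$ has degree $q^{3(i-1)}h^i$ and $\kappa_0$ has $q$-degree $3$, its $\hslash$-parity being opposite to that of the differential. Each factor $\kappa_0\delta$ therefore contributes $q^{3}h$ plus the increments of $\delta$, which is exactly the required linear relation between $q$- and $h$-degree. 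The genuine subtlety is that the $A_\infty$-grading on $\mathcal A$ is only an integral lift of $\hslash$. The entries of $\delta'$ lie in $\homom_{\mathcal A}$, and any $\Z/2$-homogeneous element is a sum of integrally homogeneous ones, but the twisted-complex conventions need $\delta'$ to be homogeneous of $A_\infty$-degree $1$. I would handle this by first shifting each summand $\mathcal M(\widetilde{o(\Gamma_s)})$ by the integral degree prescribed by its $(q,h)$-degree, exactly as $q$ and $h$ are matched in $\mathcal B^\infty_{ds}$. I would then argue that a map of fixed $(q,h)$-degree between generators automatically has a single integral degree. In the $E^4_*$ case this follows from the gradings $|y|=2$ and $|x|=|z|=0$ and the basis degrees specified above. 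In general it is part of what fixing the lift means, and I would state it as the assumption on the lift. Finally, $\mathcal M'(D)$, with vertical part $\bigoplus\mathcal M(\widetilde{o(\Gamma_s)})$ and twisted differential $\delta'$, is the claimed object of $Tw^h\mathcal A(E,\partial\sigma)$, and $(\iota,\rho,\kappa)$ is the explicit special $0$-deformation retract.
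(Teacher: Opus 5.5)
Your first two steps are exactly the paper's proof. You decompose $\mathcal M(D)$ as a cube of resolutions, apply Proposition~\ref{prop:mfdelooping} at each vertex to obtain vertical special deformation retract data over $R^\partial$, and feed this into the Perturbation Lemma~\ref{lem:specialDefRetract}. Your four series agree term by term with \eqref{eq:deformedsdrdata}, taking $A=\sum_j(\delta\kappa_0)^j\delta$. Your finiteness argument, the side conditions and the $(q,h)$-bookkeeping are all correct. There is one slip: $\kappa_0$ is $\hslash$-odd, just like the differential. What is opposite is its $q$-degree, $q^{3}$ against $q^{-3}$ for $d_0$. If $\kappa_0$ were even, $\kappa_0\delta$ would be odd and $\iota$ would not be a homomorphism.

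You should discard the ``genuine subtlety'' in your third step.
\begin{itemize}
\item \emph{It is not needed.} The target $Tw^h\mathcal A(E,\partial\sigma)$ consists of $\hslash$-graded twisted complexes. The multiplicity spaces $V^i$ in $\Sigma_\hslash\mathcal A$ are only $\Z/2$-graded. The twisted differential only has to have $\hslash$-degree $1$, with its $h^k$-component of degree $\hslash^1q^{3(k-1)}$. The signs $\vartriangleleft$ and $\odot$ depend only on parities, which the integral lift reduces to. So nothing requires $\delta'$ to be homogeneous for the integral grading. This is why the paper regards $\mathcal M'(D)$ as an object of $Tw^h\mathcal A$ as soon as its vertical part is a sum of shifted generators.
\item \emph{The claim you would use is false for the lift the paper uses.} In $\mathcal A(E^4_*,\partial\sigma_0)$, the variables $x$ and $y$ both have $q$-degree $-2$, but $|x|=0$ and $|y|=2$. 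So multiplication on $\mathcal M_\circ$ by $x+y$, or by $G=z-x-y$, has a fixed $(q,h,\hslash)$-degree but no single integral degree. This mirrors $G=S^2-D$ in $\mathcal B$, where $|S^2|=0$ but $|D_\circ|=2$.
\end{itemize}
Imposing that claim as a hypothesis on the lift would therefore exclude exactly the case needed to define $\De^\infty(T)$. Delete that paragraph; the rest of your argument proves the proposition.
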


\begin{proof}
	The multifactorization associated to the decorated tangle diagram can be written as a cube of resolutions
	\[
		\mathcal M (D) = \bigoplus_{s\subseteq I_c} \mathcal M (\Gamma_s),
	\]
	where each summand is a matrix factorization associated to a decorated crossingless diagram. Then Proposition \ref{prop:mfdelooping} can be used to decompose each summand into shifted copies of the objects in $\mathcal A(E,\partial\sigma)$ up to special $0$-deformation retract, thus defining a special deformation retract of matrix factorizations
	\[
		\sdr{\underline{\mathcal M}'}{\underline{\mathcal M (D)}}{}{}{}.
	\]
	The desired multifactorization deformation retract $\mathcal M'(D)$ is obtained by altering $d_{\underline{\mathcal M}'}$ in the way indicated by the following lemma and its proof.
\end{proof}

\begin{lemma} [Perturbation Lemma]\label{lem:specialDefRetract}
	Suppose
	\[
		\sdr{(M',d')}{(M,d)}{i}{p}{k}
	\]
	is a special 1-deformation retract between matrix multifactorizations $M'$ and $M$, and let $\tilde d$ be another multifactorization differential in $M$ such that its vertical part coincides with $d$ (i.e. $\tilde d_0=d_0$) and $(\tilde d-d)k$ is nilpotent. Then there exists a special 1-deformation retract
	\[
		\sdr{(M',\tilde d')}{(M,\tilde d)}{\iota}{\rho}{\kappa}.
	\]
	such that $\tilde d'_0=d'_0$.
	If, furthermore, $k$ is a $0$-homotopy, making the original data a special 0-deformation retract, then the result is also a special 0-deformation retract, with the additional properties $\rho_0=p_0$, $\iota_0=i_0$ and $\kappa_0=k_0$.
\end{lemma}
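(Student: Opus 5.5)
This is the homological perturbation lemma adapted to $h$-filtered multifactorizations, and I would prove it with the standard explicit formulas. Set $\delta:=\tilde d-d$. Since $\tilde d_0=d_0$, $\delta$ has components only in $h$-degree $\geq 1$, so $\delta$ is a $(-1)$-morphism. The homotopy $k$ has components in $h$-degree $\geq -1$. Nilpotency of $\delta k$ makes
\[
	A:=\sum_{n\geq 0}(\delta k)^n\delta=\delta(1-k\delta)^{-1}
\]
a finite sum. I would then define
\[
	\tilde d':=d'+pAi,\qquad \iota:=i+kAi,\qquad \rho:=p+pAk,\qquad \kappa:=k+kAk.
\]

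First I check the algebraic identities. Using $pi=1$, $pk=0$, $ki=0$ and $k^2=0$, we get $\rho\iota=1+pAki+pkAi+pAkkAi=1$. Similarly $\kappa\iota=ki+kAki+kkAi+kAkkAi=0$, and likewise $\rho\kappa=0$ and $\kappa^2=0$; each term contains a factor $ki$, $pk$ or $k^2$. Next comes the core verification: $(\tilde d')^2=W$, that $\iota$ and $\rho$ intertwine $\tilde d'$ and $\tilde d$, and that $\tilde d\kappa+\kappa\tilde d=\iota\rho-1$. For this I would use the standard identity $A=\delta+\delta kA=\delta+Ak\delta$ together with $dk+kd=ip-1$ (with the sign convention fixed by the definition of homotopy), $di=id'$, $pd=d'p$, and $\tilde d^2=d^2=W$. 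The last gives $d\delta+\delta d+\delta^2=0$, and I would substitute $\delta^2=-(d\delta+\delta d)$ where needed. This is the classical telescoping computation; the only care needed is that all maps are odd or even in $\hslash$ consistently, since $\delta$ and $k$ are both $\hslash$-odd, so no extra signs appear in the composites.

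The step I expect to be the main obstacle is the degree bookkeeping. I need to show that the new maps have the required filtered degrees and the required $(\hslash,q)$-homogeneity in each $h$-degree. The $h$-degree of each term of $(\delta k)^n\delta$ is at least $n\cdot(1-1)+1=1$, so $A$ is again a $(-1)$-morphism with components $A_j$ in degree $h^j$, $j\geq 1$. In $q$-degree, $A_j$ has degree $q^{3(j-1)}$: $\delta_a$ has $q^{3(a-1)}$ and $k_b$ has $q^{3(b+1)}$, and these add correctly because $h$-degrees add. Consequently $pAi$ is a $(-1)$-morphism with the correct degrees, and since $i,p$ are $0$-morphisms (and hence $1$-morphisms), $\tilde d'$ has zero vertical part. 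Therefore $\tilde d'_0=d'_0$. Also $kAi$ has $h$-degree $\geq -1+1+(\text{degree of } i)$, so $\iota$ remains a $1$-morphism, and the same holds for $\rho$. Finally $kAk$ has $h$-degree $\geq -1$, so $\kappa$ is a $1$-homotopy.

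In the refined case where $k$ is a $0$-homotopy, $k$ has $h$-degree $\geq 0$ and $A$ has degree $\geq 1$. Then $kAi$, $pAk$ and $kAk$ all have $h$-degree $\geq 1$, so the $h^0$ components of $\iota,\rho,\kappa$ are exactly $i_0,p_0,k_0$. This also shows that all maps are $0$-morphisms, giving a special $0$-deformation retract.
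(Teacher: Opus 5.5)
Your proposal is the paper's proof: the same $A=\sum_{j}(\delta k)^j\delta$, the same deformed data $\tilde d'=d'+pAi$, $\iota=i+kAi$, $\rho=p+pAk$, $\kappa=k+kAk$, and the same filtered-degree bookkeeping; like the paper (which cites Crainic), you defer the main intertwining and homotopy identities to the classical computation, and you correctly note that it adapts to nonzero potential. Two wording slips to fix: your own bound shows $kAi$ and $pAk$ have $h$-degree $\geq 0$, so $\iota$ and $\rho$ are $0$-morphisms, which is what a special $1$-deformation retract requires, not merely $1$-morphisms as you wrote; and it is the perturbation $pAi$, not $\tilde d'$, that has zero vertical part.
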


\begin{proof} Let
	\[
		A=(1-(\tilde d-d)k)^{-1}(\tilde d-d)=\sum_{j=0}^{\infty} ((\tilde d-d)k)^j(\tilde d-d).
	\]
	Notice that $\tilde d-d$ has filtered degree $\geq 1$, while $k$ has filtered degree $\geq -1$. In total, $A$ has filtered degree $\geq 1$.
	Take
	\begin{equation}\label{eq:deformedsdrdata}
		\begin{aligned}
			\tilde d'&=d'+pAi
			\\
			\rho&=p+pAk
			\\
			\iota&=i+kAi
			\\
			\kappa&=k+kAk
		\end{aligned}
	\end{equation}
	These define special deformation retract data: see \cite{crainic} for calculations, which readily adapt to non-zero potential. Notice that the term $pAi$, i.e. the deformation of the differential, has filtered degree $\geq 1$, which guarantees $\tilde d_0'=d_0'$. In the case where $k$ is a 1-homotopy, then $pAk$ and $kAi$ preserve filtration, while $kAk$ has filtered degree $\geq -1$, which are necessary conditions for the deformed data $(\iota,\rho,\kappa)$ to give a special 1-deformation retract. If $k$ is additionally a 0-homotopy, then all these deformation terms have now filtered degree $\geq 1$, so that $(\iota,\rho,\kappa)$ is a special 0-deformation retract with $\rho_0=p_0$, $\iota_0=i_0$ and $\kappa_0=k_0$.
\end{proof}

We conclude this section by describing how $\mathcal M(-)$ extends the Bar-Natan invariant in the case of 4-ended oriented tangles. Let us invoke the following theorem, which is itself a particular version of the homological mirror symmetry results in \cite{aaeko}. Recall that a functor $\mathcal F$ between $q$-filtered $A_\infty$-categories is called $q$-filtered if each component is $q$-homogeneous of degree $q(\mathcal F_n)=3(n-1)$.

\begin{theorem}[cf. \cite{kwzmutation}]
	Let $\mathbf k$ be a field. There is a $q$-filtered quasi-isomorphism
	\[
		\mathcal F: \mathcal A( E_*^4,\partial\sigma_0)\otimes \mathbf k \to \mathcal B^\infty_{ds}\otimes \mathbf k.
	\]
\end{theorem}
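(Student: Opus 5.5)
The plan is to build $\mathcal F$ by homological perturbation, transferring the dg-structure of $\mathcal A(E^4_*,\partial\sigma_0)\otimes\mathbf k$ onto its cohomology and then comparing the resulting minimal model with $\mathcal B^\infty_{ds}\otimes\mathbf k$. First I would compute the cohomology of the endomorphism dg-algebra of $\mathcal M_\circ\oplus\mathcal M_\bullet$ over $\mathbf k[x,y,z]$ with potential $W=xyz$, using the Koszul description $\mathcal M_\circ=K(x\,|\,yz)$ and $\mathcal M_\bullet=K(y\,|\,xz)\hslash$. The morphism complexes are Koszul-type complexes, and the identifications $\mathcal M_\circ\simeq \mathbf k[y,z]/(yz)$ and $\mathcal M_\bullet\simeq\mathbf k[x,z]/(xz)$ (stabilizations of hypersurface modules) make it straightforward to show that the cohomology is the quiver algebra $\mathcal B\otimes\mathbf k$. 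Here $D$ at $\circ$ corresponds to $y$ or $z$, $D$ at $\bullet$ to $x$ or $z$, and $S$ to the explicit saddle maps $\partial+z\theta$ or the analogous one. With the chosen $A_\infty$-grading ($|y|=2$, $|x|=|z|=0$, and the stated degrees of $\vartheta$ and $1$), this recovers the gradings of Example \ref{ex:diskseq}, and the relation $G=S^2-D$ recovers $z=x+y+G$.

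Second, I would choose explicit special deformation retract data from $\End(\mathcal M_\circ\oplus\mathcal M_\bullet)$ onto a $\mathbf k$-span of chosen cocycle representatives, using $\Quo/\Res$ homotopies in the spirit of Lemma \ref{sdrlemma}. Kadeishvili/homological perturbation then produces a minimal $A_\infty$-structure $\mu_n^{tr}$ on $\mathcal B\otimes\mathbf k$, together with an $A_\infty$ quasi-isomorphism $\mathcal A\otimes\mathbf k\to(\mathcal B\otimes\mathbf k,\mu^{tr})$ given by the tree formulas. Since the differential and composition are $q$-homogeneous, and the homotopy has the degree $-q(d)$ forced by the definition of homotopy, the tree formulas should have $q(\mu_n)=3(n-2)$ and $q(\mathcal F_n)=3(n-1)$. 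I would check this by counting degrees: each internal edge carries a homotopy of $(\hslash,q)$-degree $\hslash^1q^{3}$ relative to the vertex.

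Third, I would identify $(\mathcal B\otimes\mathbf k,\mu^{tr})$ with $\mathcal B^\infty_{ds}\otimes\mathbf k$ up to a $q$-filtered $A_\infty$-isomorphism that has identity linear part. This step is the main obstacle. My first attempt would be a direct computation: show that $\mu^{tr}_4(S,D,S,D)$ and $\mu^{tr}_4(D,S,D,S)$ equal the idempotent, and that the transferred operations obey the same recursive disk-sequence rule. The $q$-degree constraint $q(\mu_n)=3(n-2)$ together with the $A_\infty$-grading severely restricts which operations can be nonzero. Failing a direct match, I would use the Hochschild-cohomology classification of \cite{hkk} and \cite{aaeko}: minimal $A_\infty$-structures on this zigzag-type algebra with the given grading are classified by the values of $\mu_4$ on the two basic disk sequences, up to gauge equivalence. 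It would then suffice to verify that these values are nonzero units by a single explicit computation. Composing the transfer quasi-isomorphism with this gauge equivalence gives the desired $q$-filtered $\mathcal F$, and its linear part induces the identification \eqref{eq:homologyofmf} on cohomology.
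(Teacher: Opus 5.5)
Your Steps 1 and 2 are fine. $H(\End\mathcal M_\circ)=\mathbf k[x,y,z]/(x,yz)$ is concentrated in even degrees, and one matches $D_\circ\leftrightarrow y$, $S^2\leftrightarrow z$, $D_\bullet\leftrightarrow x$ up to sign. The differential, the composition and a bihomogeneous choice of homotopy all preserve the weight $q+3|\cdot|$, so the tree formulas automatically give $q(\mu^{tr}_n)=3(n-2)$ and $q(\mathcal F_n)=3(n-1)$.

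The genuine gap is Step 3, which carries the whole content of the theorem. The paper does not prove this statement. It imports it from \cite{kwzmutation} as a special case of the mirror symmetry equivalence of \cite{aaeko} between the wrapped Fukaya category of the three-punctured sphere and $MF(\mathbf k[x,y,z],xyz)$. There, the Fukaya-side endomorphism algebra of the relevant arcs is given by the disk-sequence model of \cite{hkk}.

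The rigidity statement you put in its place is not available off the shelf. As far as I know, \cite{hkk} contains no Hochschild-cohomological classification of $A_\infty$-structures on $\mathcal B$. The uniqueness argument in \cite{aaeko} is for the endomorphism algebra of their own generating collection of arcs, with their gradings. Rigidity of one algebra gives no control over arbitrary minimal $A_\infty$-structures on the endomorphism algebra of a subcollection of its objects. That changes only if you already know both structures on $\mathcal B$ extend to the larger algebra, and that is exactly the mirror-symmetry input on which the cited proof rests.

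Nor do the gradings do the work for you. Since $S^2$ and $D$ at $\bullet$ both have bidegree $(|\cdot|,q)=(0,-2)$, a component such as $\mu_4(S,S^2,S,D_\circ)=c\cdot 1_\circ$ is compatible with $|\mu_4|=-2$ and $q(\mu_4)=6$, although it vanishes in $\mathcal B^\infty_{ds}$. You would have to show that all such components can be gauged away, and that no further moduli or obstructions arise for $\mu_6,\mu_8,\dots$. Concretely, you need the weight-zero part (for $q+3|\cdot|$) of $HH^{n}(\mathcal B,\mathcal B)$ and $HH^{n+1}(\mathcal B,\mathcal B)$ in internal degree $2-n$, for all $n\geq 4$. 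This is the missing key lemma. Your option (a) does not replace it: the transferred operations depend on the chosen homotopy, and there are infinitely many disk sequences to match.

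A further correction concerns normalization. Every element of $\mathcal B$ has even degree, so a gauge transformation with identity linear part has $f_2=0$ and changes $\mu_4$ by $\delta f_3$. That coboundary vanishes on disk sequences for degree reasons. Hence the values of $\mu_4$ on disk sequences are gauge invariants and must equal $1$, not merely be units. This is fixed by first applying a rescaling automorphism such as $D_\circ\mapsto\lambda D_\circ$. The $A_\infty$-relations on inputs $(S,D,S,D,S)$ and $(D,S,D,S,D)$ force the four cyclic values to coincide, so one rescaling suffices, but this step must be stated.

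With Step 3 supplied, your route would give a purely algebraic proof in which the $q$-filtration comes for free.
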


Since the delooping $\mathcal M'(D)$ of $\mathcal M(D)$ can be seen as an object in $Tw^{h}\mathcal A(E^4_*,\partial\sigma_0)$, we may use the induced functor to obtain a bifiltered twisted complex over $\mathcal B^\infty_{ds}\otimes\mathbf k$. We observe that $\mathcal F$ is unital.

\begin{definition}
	Let $T$ be a tangle with boundary $E^4_*$, and let $D$ be a decorated tangle diagram for $T$ with coloring $\sigma_0$. Let $\mathcal M'(D)\in Tw^{h}\mathcal A(E^4_*,\partial\sigma_0)$ be a special 0-deformation retract of $\mathcal M(D)$. Define
	\[
		\De^\infty(D) \in Tw^h (\mathcal B^\infty_{ds}\otimes\mathbf k)
	\]
	to be the image of $\mathcal M'(D)\otimes\mathbf k$ under the map induced by the quasi-isomorphism $\mathcal F$ at the level of bifiltered twisted complexes.
\end{definition}

\section{Invariance}\label{sec:inv}

Here are the main invariance results.

\begin{theorem}\label{thm:invariance}
	Let $T$ be an oriented tangle with boundary $E$ and let $D$ and $D'$ be decorated tangle diagrams for $T$ with fixed boundary coloring $\partial\sigma$. Then $\mathcal M(D)$ and $\mathcal M(D')$ are 1-homotopy equivalent matrix multifactorizations.
\end{theorem}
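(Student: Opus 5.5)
Two decorated diagrams for $T$ with the same boundary coloring differ by two kinds of changes: changes of decoration with the underlying diagram fixed (arc system, labels of arcs and crossings, arc directions), and planar isotopies together with Reidemeister moves I, II, III. Since $1$-homotopy equivalence is an equivalence relation, it suffices to produce a $1$-homotopy equivalence for each elementary change, performed locally in a disk.

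The decoration changes are vertical and easy. Relabelings are handled by the swap isomorphisms $\tau_{i,j}$ of Lemma \ref{lem:labelperm}, which commute with every saddle map $S_i$ up to the relabeling. Reversing an arc multiplies $\vartheta_j$ by $-1$ (Remark \ref{remarkalteration}), which is again a vertical isomorphism. For changes of arc system, adding an extra arc is undone by arc removal \eqref{sdr:interiorfaceremoval}, and moving arcs is done with the slide isomorphisms of Lemma \ref{lem:arrowslide}. Applied resolution by resolution, these give a vertical special deformation retract of $\underline{\mathcal M(D)}$. The horizontal part $d_1+d_2+\cdots$ has filtered degree $\ge1$, and the homotopy is vertical. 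So the Perturbation Lemma \ref{lem:specialDefRetract}, with nilpotence coming from boundedness in $h$, upgrades this to a special $0$-deformation retract of multifactorizations. An arc may also need to pass through a crossing region. To handle this, first slide it off the crossing by choosing arcs disjoint from crossing neighborhoods, which the definition allows. Isotopy invariance is then immediate.

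Locality is organized as follows. Write $D=D_{\mathrm{loc}}\cup D_{\mathrm{rest}}$, so that by \eqref{eq:defmultifact} we have $\mathcal M(D)=\mathcal M(D_{\mathrm{loc}})\otimes\mathcal M(D_{\mathrm{rest}})$ over the ring of shared edge variables. A special $n$-deformation retract of the local factor tensors with the identity to one of the whole, with signs handled by the $\hslash$ conventions. For each Reidemeister move I plan to do the following.

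\begin{enumerate}
\item Write the local cube of resolutions.
\item Use the explicit retracts of Examples \ref{ex:R1arcremoval}--\ref{ex:circleremovalreid3} (arc removals and delooping) on the resolutions that contain a loop or a redundant arc.
\item Feed the result into the Perturbation Lemma to get a retract onto a smaller multifactorization in which a saddle component is now an isomorphism between two summands.
\item Cancel that pair by Gaussian elimination, which is itself a special deformation retract obtained from the Perturbation Lemma.
\end{enumerate}

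For R1, delooping by \eqref{sdr:simpleloopremoval} gives two copies of the single strand, shifted by $q^{\pm1}\hslash$. The saddle to the other resolution, after composing with \eqref{eq:sdrthickloop}, is an isomorphism onto one copy. For R2, the middle resolutions use \eqref{eq:sdrthickthin} and \eqref{sdr:thinthin}, and two cancellations leave the identity tangle.

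The degree bookkeeping is the key point. In R2 the cancellation inverts a component of $d_1$, so the resulting homotopy has $h$-degree $-1$, while perturbed differentials may acquire higher terms $d_k$. This is exactly why the statement asks only for a $1$-homotopy equivalence: homotopies of filtered degree $\ge -1$ are allowed, and the first half of Lemma \ref{lem:specialDefRetract} preserves this class. I must also check the nilpotence hypothesis, that $(\tilde d-d)k$ is nilpotent. This holds because $k$ lowers $h$ by at most one while $\tilde d-d$ raises it by at least one, and a strict gain comes from working relative to a finite cube. I expect this to need a more careful filtration argument, for instance by splitting $k$ into its vertical part and its $h^{-1}$ cancellation part.

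The main obstacle is R3. Its cube has eight resolutions, including the loop-containing ones of Example \ref{ex:circleremovalreid3}, where the direct delooping condition fails and one must first apply the arc slide $\psi_{2,3}$. Rather than comparing the two sides of R3 directly, I would follow Bar-Natan's strategy: treat $\mathcal M$ of the R3 picture as a cone over one crossing of $\mathcal M$ of an R2 configuration. Apply the R2 retract from above and perturb through the cone via Lemma \ref{lem:specialDefRetract}. This reduces each side of R3 to the same explicit multifactorization, a cone built from the braid-like resolutions, up to a vertical isomorphism given by arc slides and the swap maps. What remains is to check that the two resulting higher components $d_2,d_3,\dots$ agree, or at least are related by a vertical isomorphism. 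This step requires explicit computation with the formulas of Example \ref{ex:R3arcremoval} and Example \ref{ex:circleremovalreid3}, and it is the part I expect to take the most work. If the perturbed differentials do not match on the nose, I would look for a correcting $1$-homomorphism of the form $1+(\text{filtered degree}\ge1)$ by solving degree by degree in $h$.
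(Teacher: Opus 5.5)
Your treatment of decoration changes, R1 and R2 matches the paper: vertical arc-removal, delooping and relabeling data, the Perturbation Lemma, then cancellation via Lemma~\ref{lem:SummandLemma}. Your cone organization of R3 is also essentially what the paper computes. In its Steps 2--3, the cancelled vertices $100,101,111$, together with $110$, form exactly the bigon face in which crossing 1 is 1-resolved. This face collapses to $110$ as in R2, leaving a cone from the face $000,010,001,011$ to $110$ (Figure~\ref{fig:r3simp}).

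\textbf{The gap.} What you defer is the actual content of R3 invariance: showing that the reduced multifactorizations of the two sides agree. In Bar-Natan's setting agreement of the cone maps is visible by symmetry. Here the perturbed cone map carries higher components, for instance the length-2 arrow $000\to 110$ given by $\partial_2\theta_2\theta_3+\theta_2\partial_3\theta_3$. Nothing in your proposal explains why these should match across the two sides. Your fallback does not close this. Solving for a correction $1+\phi$ degree by degree in $h$ meets an obstruction class in the homology of the relevant morphism complex at each step, and you give no reason these vanish.

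\textbf{The missing idea.} The paper uses a global symmetry. The right-hand diagram of \eqref{eq:R3} is the image of the left-hand one under $180^\circ$ rotation, coloring inversion and reversal of all strand orientations. This acts on every formula (retract data, $\Quo$, $P^{u,l}$, perturbed arrows) by $x_0\leftrightarrow -x_3$, $x_1\leftrightarrow -x_4$, $x_2\leftrightarrow -x_5$, $u\leftrightarrow -u$, $G\leftrightarrow -G$. So one simplifies only one side to $M'''_{R3}$, transports it to $N'''_{R3}$, and checks that the two differ by exchanging arc labels and negating every arrow. The resulting vertical isomorphism is $s_2s_3\tau_{2,3}$ at each vertex. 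It needs the arc-reversal isomorphisms $s_i$; arc slides and swaps alone, as you list, would not suffice.

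Two smaller points.
\begin{enumerate}
\item You need a generating set of \emph{oriented} moves, in both colorings. The paper uses Polyak's R1a, R1b, R2a, R2b and a single R3.
\item Your degree-counting justification of nilpotence is not valid as stated, since a composite of filtered degree $\geq 0$ need not be nilpotent. It is also unnecessary. With vertical $k$, the composite $(\tilde d-d)k$ strictly raises $h$. In Gaussian elimination, or when perturbing through a cone, $k$ is supported on one summand while the perturbation maps between summands, so $(\tilde d-d)k$ squares to zero.
\end{enumerate}
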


\begin{corollary}
	Let $T$ be a tangle with boundary $E^4_*$, and let $D$ be a decorated tangle diagram for $T$ with coloring $\sigma_0$. Then the invariant $\De^\infty(T):=\De^\infty(D)\in Tw^h (\mathcal B^\infty_{ds}\otimes\mathbf k)$ is well defined up to 1-homotopy equivalence of bifiltered twisted complexes.
\end{corollary}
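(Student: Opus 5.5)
The statement to prove is the Corollary: $\De^\infty(T)$ is well defined up to 1-homotopy equivalence. Two independent choices enter the definition of $\De^\infty(D)$: the decorated diagram $D$ (coloring $\sigma_0$, arcs, labels) and the special 0-deformation retract $\mathcal M'(D)$ of $\mathcal M(D)$. The plan is to show that any two outcomes of these choices are 1-homotopy equivalent in $Tw^h\mathcal A(E^4_*,\partial\sigma_0)$. We then transport this along the induced functor of $\mathcal F$, using the Remark that induced functors of unital $q$-filtered $A_\infty$-functors preserve $n$-homotopy equivalences.

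\textbf{Step 1: compare multifactorizations.} Let $D$ and $D'$ be decorated diagrams for $T$, both with coloring $\sigma_0$, so the boundary coloring $\partial\sigma_0$ is fixed. Theorem~\ref{thm:invariance} gives a 1-homotopy equivalence between $\mathcal M(D)$ and $\mathcal M(D')$ as matrix multifactorizations over $R^\partial$ with potential $W$. If $D=D'$ and only the retract differs, the identity serves here.

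\textbf{Step 2: pass to the delooped models.} Let $\mathcal M'(D)$ and $\mathcal M'(D')$ be any special 0-deformation retracts, given by Proposition~\ref{prop:deloopmulti}. The data $(\iota,\rho,\kappa)$ consist of 0-homomorphisms with $\rho\iota=1$ and a 0-homotopy $\kappa$ from $\iota\rho$ to $1$, so $\iota$ and $\rho$ are 0-homotopy equivalences. Since a 0-homotopy is in particular a 1-homotopy, they are also 1-homotopy equivalences. We then compose three maps: $\rho_{D'}$, the 1-homotopy equivalence of Step~1, and $\iota_D$. Composing 0-homomorphisms gives a 0-homomorphism, and the composite homotopies have the form $g\kappa f$ plus sums of such terms, so they stay 1-homotopies. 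Hence $\mathcal M'(D)$ and $\mathcal M'(D')$ are 1-homotopy equivalent.

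\textbf{Step 3: identify the notions.} By the discussion before Proposition~\ref{prop:deloopmulti}, $n$-homomorphisms, $n$-homotopies and $n$-homotopy equivalences between multifactorizations whose underlying factorizations split into shifted objects of $\mathcal A$ coincide with those in $Tw^h\mathcal A(E^4_*,\partial\sigma_0)$. So the equivalence from Step~2 is a 1-homotopy equivalence of bifiltered twisted complexes.

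\textbf{Step 4: base change and transport.} Tensoring with $\mathbf k$ preserves all the defining identities and degrees. Applying the induced functor of the unital $q$-filtered functor $\mathcal F$ then yields a 1-homotopy equivalence between $\De^\infty(D)$ and $\De^\infty(D')$.

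\textbf{Main obstacle.} The one point needing care is that the induced $A_\infty$-functor sends the composite homomorphisms and homotopies to morphisms with the correct $(h,\hslash,q)$-homogeneity. The functor contributes higher terms $\mathcal F_n$ with twisted differentials inserted. These have $q$-degree $3(n-1)$ and carry additional $h$-degree at least $n-1$ from the inserted $\delta$'s, so they match the required bidegree pattern $\hslash^0q^{3k}$ for homomorphisms and $\hslash^1q^{3(k+1)}$ for homotopies. The filtration lower bounds are preserved because each inserted $\delta$ has filtered degree $\ge1$. The Remark asserts exactly this, so the argument reduces to Theorem~\ref{thm:invariance}, whose proof is not given here.
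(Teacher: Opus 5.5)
Your proposal is correct and follows the same route as the paper. Theorem~\ref{thm:invariance}, combined with the special 0-deformation retracts $\mathcal M'(D)\simeq\mathcal M(D)$, gives a 1-homotopy equivalence $\mathcal M'(D)\simeq\mathcal M'(D')$ in $Tw^h\mathcal A(E^4_*,\partial\sigma_0)$, which is then transported along the functor induced by $\mathcal F$; you simply make explicit the composition of equivalences and the degree bookkeeping that the paper's three-line proof leaves implicit.
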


\begin{proof}
	Given a decorated diagram $D$ for $T$, we have $\De^\infty(T)=\mathcal F(\mathcal M'( D)\otimes\mathbf k)$, where $\mathcal M' (D)\in Tw^h (\mathcal A( E_*^4,\partial\sigma_0))$ is a special 0-deformation retract of $\mathcal M (D)$. A different choice of diagram $D'$ and special 0-deformation retract $\mathcal M' (D')$ will be 1-homotopy equivalent to $\mathcal M' (D)$, by Theorem \ref{thm:invariance}. Since $\mathcal F$ preserves 1-homotopy equivalence, the corollary follows.
\end{proof}

\begin{remark}
	By the same arguments, there is another invariant of based 4-ended tangles corresponding to the reverse coloring $\sigma_1$. It contains the same information as $\De^\infty(-)$.
\end{remark}

The rest of this section is devoted to the proof of Theorem \ref{thm:invariance}. On the one hand, we want $\mathcal M (D)$ to be independent of the choice of diagram $D$, up to 1-homotopy. On the other hand, we also want independence from the choice of arc system $A$ and indexing. Let us begin with the precise statement of the latter.

\begin{proposition}\label{prop:arcinvariance}
	Let $D$ and $D'$ be decorated tangle diagrams with the same underlying tangle diagram, coloring and distinguished end, if there is one.
	\begin{enumerate}
		\item Suppose $D$ and $D'$ differ only in their respective sets of arcs $A$ and $A'$, and $A\subset A'$. Then there is a
		special $0$-deformation retract of multifactorizations
		\[
			\sdr{\mathcal M(D)}{\mathcal M(D')}{}{}{}.
		\]
		\item Suppose $D$ and $D'$ differ only by a permutation of the
		indexing set $I=I_a\cup I_c$. Then there is a vertical isomorphism of matrix multifactorizations
		$\mathcal M(D)\cong\mathcal M(D')$.
		\item Suppose $D$ and $D'$ differ only in their respective sets of arcs $A$ and $A'$, but these are arbitrary. Then $\mathcal M(D)$ and $\mathcal M(D')$ are 0-homotopy equivalent.
	\end{enumerate}
\end{proposition}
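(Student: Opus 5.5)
We prove the three parts in order, since (3) follows from (1) and (2).

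For part (1), we first reduce to the case $A'=A\cup\{\alpha_j\}$ with a single extra arc, since special $0$-deformation retracts compose: if $(\iota_1,\rho_1,\kappa_1)$ and $(\iota_2,\rho_2,\kappa_2)$ are such data, then $(\iota_2\iota_1,\rho_1\rho_2,\kappa_2+\iota_2\kappa_1\rho_2)$ again satisfies the side conditions. Because $D\cup\bigcup_{A}\alpha$ is already connected, $\alpha_j$ bounds an interior region in every resolution $\Gamma_s$ of $D'$. We have $R_{D'}\cong R_D[u]$ for an internal edge variable $u$ with $X_{\alpha_j}$ linear in $u$. Write $\mathcal M(D')=\bigoplus_s\mathcal M(\Gamma'_s)$ as a cube of resolutions. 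On each vertex we apply the arc-removal retract \eqref{sdr:interiorfaceremoval}: $\iota_s=\Quo^u_{X_{\alpha_j}}\sum_i(a_i\partial_i+b_i\theta_i)\theta_j$, $\rho_s=\ev_{X_{\alpha_j}=0}\partial_j\theta_j$ and $\kappa_s=-\Quo^u_{X_{\alpha_j}}\theta_j$. Here the sum runs over the arcs of $\Gamma'_s$, which are the arcs of $A'$ together with the crossing arcs. Taking the direct sum gives a special deformation retract of the vertical factorizations, with all maps in $h$-degree $0$. We then invoke the Perturbation Lemma~\ref{lem:specialDefRetract}, taking $\tilde d$ to be the full differential of $\mathcal M(D')$; its non-vertical part consists of the saddle maps $S_i=\partial_i\pm(\cdots)\theta_i$. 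The nilpotence hypothesis holds because $(\tilde d-d)k$ strictly raises $h$ and $\mathcal M(D')$ is $h$-bounded. The output is a special $0$-deformation retract onto $(\mathcal M(D),\tilde d')$ with $\tilde d'=d'+pAi$.

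The main obstacle is showing that the perturbed differential $\tilde d'$ equals the genuine differential of $\mathcal M(D)$, i.e.\ that $pAi$ is just the sum of the saddle maps of $D$ and has no higher correction. First, the saddle maps $\partial_i\pm(x_1+x_3+G)\theta_i$ for $i\neq j$ involve only $\vartheta_i$ and edge variables, so they commute (up to the Koszul sign) with $\theta_j$, $\partial_j$ and $\Quo^u_{X_{\alpha_j}}$. Using $\theta_j^2=0$, we get $(\tilde d-d)k\,(\tilde d-d)\,k=0$ in the relevant composite, and also $pSk=0$ because $\rho\theta_j=\ev\,\partial_j\theta_j\theta_j=0$. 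Hence only the $j=0$ term of $A$ survives, giving $pAi=p(\tilde d-d)i$. Second, $\rho\, S\,\iota$ must reduce to the saddle on $\mathcal M(D)$. Evaluating $\ev_{X_{\alpha_j}=0}\partial_j\theta_j\, S\,\Quo(\ldots)\theta_j$, the only surviving piece is the part of $S$ commuting past everything, and evaluation at $X_{\alpha_j}=0$ identifies the edge variables. I expect some care with the Koszul signs introduced by reordering $\vartheta_j$ relative to crossing generators, and with the identification $R_{D'}/(X_{\alpha_j})\cong R_D$; if this identification gives the result only up to a vertical isomorphism, we compose with it.

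For part (2), a permutation of $I$ reorders the tensor factors in \eqref{eq:defmultifact}. We build the isomorphism from the swap maps $\tau_{i,j}$ of Lemma~\ref{lem:labelperm} applied uniformly on every vertex of the cube, composing adjacent transpositions. Since $\tau_{i,j}$ intertwines $\theta_i\leftrightarrow\theta_j$ and $\partial_i\leftrightarrow\partial_j$, it intertwines the saddle maps, which are polynomial in these operators. It is therefore a vertical isomorphism of multifactorizations.

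For part (3), given arbitrary arc systems $A$ and $A'$, we consider $A''=A\cup A'$, after isotoping arcs so they are disjoint, which is always possible away from crossings. Part (1) gives special $0$-deformation retracts $\mathcal M(D)\leftrightarrows\mathcal M(D'')\leftrightarrows\mathcal M(D')$, using part (2) to fix the indexing. Each special $0$-deformation retract is a $0$-homotopy equivalence, with the $0$-homomorphisms $\iota,\rho$ and the $0$-homotopy $\kappa$ as witnesses. Since $0$-homotopy equivalence is transitive, $\mathcal M(D)$ and $\mathcal M(D')$ are $0$-homotopy equivalent.
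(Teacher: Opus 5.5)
Your parts (1) and (2) follow the paper's proof. In (1) you do arc removal \eqref{sdr:interiorfaceremoval} at every vertex, perturb with Lemma~\ref{lem:specialDefRetract}, and concatenate via Lemma~\ref{lem:concat}; in (2) you apply swap isomorphisms vertex by vertex. Your extra check that $p\delta k=0$ and that $p\delta i$ is exactly the saddle differential of $\mathcal M(D)$ is sound, and goes beyond what the paper writes out. One correction there: a saddle coefficient such as $x_1+x_3+G$ can involve $u$ when the crossing is adjacent to the subdivided edge, so $S_i$ need not commute with $\Quo^u_{X_{\alpha_j}}$. You do not need that. It suffices that $\Quo^u_{X_{\alpha_j}}$ commutes with $\theta_j$ and $S_i$ anticommutes with $\theta_j$, which already gives $\partial_j\theta_j S_i\Quo^u_{X_{\alpha_j}}\theta_j=0$.

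The genuine gap is in part (3), where you claim the arcs of $A$ and $A'$ can always be isotoped to be disjoint. This is false in general. The interior of each arc lies in a single complementary region of $D$, and its endpoints stay on fixed edges of that region, because they cannot pass through crossings. Suppose a region is a square whose corners are crossings, $\alpha\in A$ joins two opposite sides, and $\alpha'\in A'$ joins the other two. The endpoints interleave, so $\alpha\cap\alpha'\neq\emptyset$ for every allowed representative, and $A\cup A'$ is not an arc system. Moreover, sliding an endpoint past an endpoint of another arc is not a harmless isotopy. It is an arc slide, which changes the factorizations by the nontrivial isomorphisms of Lemma~\ref{lem:arrowslide}, and these would have to be carried through the cube.

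The missing idea is an induction on the number of transverse intersection points between $A$ and $A'$, which is how the paper proceeds. Choose $\alpha\in A$ and $\alpha'\in A'$ meeting at $p$ such that the segment $\beta$ of $\alpha'$ from $p$ to its tip meets no other arc of $A$. Add two arcs $\alpha_i,\alpha_j$ that run parallel to the two halves of $\alpha$, turn at $p$, and follow $\beta$ on either side to end on the strand next to the tip of $\alpha'$. Part (1) makes $\mathcal M(D)$ a special $0$-deformation retract of $\mathcal M(\tilde D)$, where $\tilde D$ has arcs $A\cup\{\alpha_i,\alpha_j\}$. Since $\alpha_i$, the strand and $\alpha_j$ reconnect the endpoints of $\alpha$, part (1) applies again to remove $\alpha$. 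This yields $\overline A=A\cup\{\alpha_i,\alpha_j\}\setminus\{\alpha\}$, which has one fewer intersection with $A'$. Iterating reduces to the disjoint case, which your argument via $A\cup A'$ handles correctly.
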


Observe that all three parts yield 0-homotopy equivalences, which is stronger than strictly needed for Theorem \ref{thm:invariance}. Before we present a proof of the proposition, we state a lemma that will also be useful in the rest of this section. Its proof follows directly from the definitions.

\begin{lemma}[Concatenating special deformation retracts]\label{lem:concat}
	Suppose we have special $n$-deformation retracts
	\[
		\sdr{(A,d_A)}{(B,d_B)}{i_1}{p_1}{k_1}
		,\qquad
		\sdr{(B,d_B)}{(C,d_C)}{i_2}{p_2}{k_2}
		.
	\]
	Then the following is a special $n$-deformation retract
	\[
		\sdr{(A,d_A)}{(C,d_C)}{i_2i_1}{p_1p_2}{k_2+i_2k_1p_2}
		.
	\]
\end{lemma}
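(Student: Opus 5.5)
We verify the five axioms of a special $n$-deformation retract for the composed data $(\iota,\rho,\kappa):=(i_2i_1,\,p_1p_2,\,k_2+i_2k_1p_2)$, using only the algebraic identities of the two given retracts and the facts that compositions add filtered degrees. The plan is therefore to check the filtration and homomorphism conditions first, then the four identities $\rho\iota=1$, $\kappa\iota=0$, $\rho\kappa=0$, $\kappa^2=0$, and finally the homotopy identity.

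\textbf{Degree conditions.} Composites of $0$-homomorphisms are $0$-homomorphisms, so $i_2i_1$ and $p_1p_2$ are $0$-homomorphisms. The term $i_2k_1p_2$ has filtered degree $\geq 0+(-n)+0=-n$ and $\hslash$-degree $1$, as does $k_2$. Hence $\kappa$ is an $n$-morphism of the required degree.

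\textbf{Algebraic identities.} Each follows by substitution.
\begin{itemize}
\item $\rho\iota=p_1p_2i_2i_1=p_1i_1=1$.
\item $\kappa\iota=k_2i_2i_1+i_2k_1p_2i_2i_1=0+i_2k_1i_1=0$.
\item $\rho\kappa=p_1p_2k_2+p_1p_2i_2k_1p_2=0+p_1k_1p_2=0$.
\item $\kappa^2=k_2^2+k_2i_2k_1p_2+i_2k_1p_2k_2+i_2k_1p_2i_2k_1p_2$. Here $k_2i_2=0$, $p_2k_2=0$, and the last term equals $i_2k_1^2p_2=0$, so $\kappa^2=0$.
\end{itemize}

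\textbf{Homotopy identity.} This is the only step needing care. Write $\delta(k)=d k+kd$ with the appropriate differentials; in the matrix factorization setting the sign is $+$ because $k$ has $\hslash$-degree $1$. We use:
\begin{itemize}
\item $\delta(k_2)=i_2p_2-1_C$;
\item $d_Ck_1'+k_1'd_C$ with $k_1':=i_2k_1p_2$. Since $i_2$ and $p_2$ are homomorphisms they commute with the differentials, so this equals $i_2(d_Bk_1+k_1d_B)p_2=i_2(i_1p_1-1_B)p_2=i_2i_1p_1p_2-i_2p_2$.
\end{itemize}
Summing the two, the $i_2p_2$ terms cancel and we get $\iota\rho-1_C$. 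This shows $\kappa$ is a homotopy between $\iota\rho$ and $1_C$.

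The only potential obstacle is the sign convention in that commutation. Because $i_2$ and $p_2$ are even, the relation $fd=(-1)^{\hslash(f)}df$ gives no sign, so the cancellation goes through exactly as written. For multifactorizations the same computation applies verbatim with the full differentials $d=d_0+d_1+\cdots$, since $i_2$ and $p_2$ are homomorphisms with respect to the total differentials.
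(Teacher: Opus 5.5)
Your proposal is correct and takes the same approach as the paper, which says only that the lemma ``follows directly from the definitions.'' Your check of the filtration and degree conditions, of $\rho\iota=1$, $\kappa\iota=0$, $\rho\kappa=0$ and $\kappa^2=0$, and of the homotopy identity (using that the even homomorphisms $i_2,p_2$ commute with the total differentials) is exactly that direct verification written out.
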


\begin{proof}[Proof of Proposition \ref{prop:arcinvariance}]
	For part 1, assume first that $A'=A\cup \{\alpha\}$. If we define vertical deformation retract data consisting of arc removal \eqref{sdr:interiorfaceremoval} at each resolution, the Perturbation Lemma \ref{lem:specialDefRetract} yields a special 0-deformation retract of the desired form. Such special 0-deformation retracts can be concatenated by Lemma \ref{lem:concat} to obtain the general case $A\subset A'$. Regarding part 2, we note that relabeling isomorphisms may be used at each resolution to define a vertical isomorphism of matrix multifactorizations. For the final part, assume first that there are no intersections between arcs in $A$ and $A'$. Then there is a third diagram $D''$ with arc system $A\cup A'$, up to some relabeling, so that $\mathcal M(D'')$ has both $\mathcal M(D)$ and $\mathcal M(D')$ as special 0-deformation retracts, making them 0-homotopy equivalent. For the general case, we can assume that the arcs in $A$ and $A'$ intersect transversally. Consider arcs $\alpha\in A$ and $\alpha'\in A'$ intersecting at $p\in\mathbb D^2$, and such that $\alpha'$ meets no other arcs in $A$ in the segment $\beta$ between $p$ and the tip of $\alpha'$. Consider arcs $\alpha_i$ and $\alpha_j$ which run parallel to $\alpha$ except in a neighborhood of the segment $\beta$, where they take the form
	\[
		\begin{tikzpicture}[scale=1.1, baseline={(0,0)}]
			\fill[black!10] (-35:1) .. controls (0.48,-0.42) and (0.48,0.42) .. (35:1)
				arc[start angle=35, end angle=-35, radius=1] -- cycle;
			\draw[line width=0.9pt] (-35:1) .. controls (0.48,-0.42) and (0.48,0.42) .. (35:1);
			\draw[dotted, line width=0.6pt] (0,0) circle (1);
			\draw[dotted, dash pattern=on 1pt off 1pt, line width=0.6pt] (-0.3,0.954) -- (-0.3,-0.954);
			\draw[dash pattern=on 2.5pt off 1.5pt, line width=0.5pt, black!55, -{>[scale=0.8]}] (-1,0) -- (0.565,0);
			\draw[dotted, dash pattern=on 1pt off 1pt, line width=0.6pt, -{>[scale=0.8]}] (-0.19,0.982) -- (-0.19,0.27)
				.. controls (-0.19,0.21) and (-0.17,0.19) .. (-0.11,0.19) -- (0.583,0.19);
			\draw[dotted, dash pattern=on 1pt off 1pt, line width=0.6pt, -{>[scale=0.8]}] (-0.19,-0.982) -- (-0.19,-0.27)
				.. controls (-0.19,-0.21) and (-0.17,-0.19) .. (-0.11,-0.19) -- (0.583,-0.19);
			\fill (-0.3,0) circle (0.7pt);
			\node[font=\tiny, anchor=north east, inner sep=1pt] at (-0.32,-0.02) {$p$};
			\node[font=\tiny, anchor=east, inner sep=1pt] at (-0.32,0.6) {$\alpha$};
			\node[font=\tiny, anchor=south, inner sep=1pt] at (-0.68,0.03) {$\alpha'$};
			\node[font=\tiny, anchor=west, inner sep=1pt] at (-0.17,0.62) {$\alpha_i$};
			\node[font=\tiny, anchor=west, inner sep=1pt] at (-0.17,-0.62) {$\alpha_j$};
		\end{tikzpicture}_.
	\]
	By part 1, $\mathcal M(D)$ is a special 0-deformation retract of the multifactorization $\mathcal M(\tilde D)$ associated to the diagram $\tilde D$ with arc system $A\cup\{\alpha_i,\alpha_j\}$. The latter, in turn, has another special 0-deformation retract $\mathcal M(\overline D)$ associated to the diagram $\overline D$ with arc system $\overline A:=A\cup\{\alpha_i,\alpha_j\}\setminus \{\alpha\}$. But now there is one fewer intersection point between arcs in $\overline A$ and $A'$ than between arcs in $A$ and $A'$. We can iterate this argument to reduce to the intersection-free case.
\end{proof}

Verification of invariance of $\mathcal M(D)$ under Reidemeister moves will proceed by looking at the explicit matrix multifactorization associated to the tangle at one side of each Reidemeister move and simplifying it in a few steps. At its core, the argument consists of decomposing each decorated resolution into simple decorated diagrams by means of arc removal and delooping special 0-deformation retracts, which then extend into a special 0-deformation retract of the entire multifactorization by the Perturbation Lemma \ref{lem:specialDefRetract}. There is enough control of the deformation data to compute explicit differentials in the resulting multifactorization, and in particular to identify some isomorphism components in $d_1$. One then removes the contractible summands along these isomorphisms to obtain a special 1-deformation retract of a desirable simplified form. Removal of 1-contractible summands is a particular case of Lemma \ref{lem:specialDefRetract}, which we make explicit now for convenience.

\begin{lemma}\label{lem:SummandLemma}
	Let $(M,d)$ be a matrix multifactorization of the form $M=C\oplus M'$, with differential
	$d_M=\begin{pmatrix}d_C & \beta \\ \gamma & d_{M'} \end{pmatrix}$, and such that $\beta_0$ and $\gamma_0$ vanish. Suppose that we have a special 1-deformation retract
	\[
		\sdr{0}{(C,d_C)}{}{}{k}.
	\]
	Then there exists a special 1-deformation retract
	\[
		\gsdr{(M',d_{M'}+\gamma k\beta)}{(M,d_M)}{\iota}{\rho}{\kappa}.
	\]
\end{lemma}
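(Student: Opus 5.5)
The plan is to apply the Perturbation Lemma~\ref{lem:specialDefRetract} to a decomposition of $M$ in which the off-diagonal blocks are removed from the differential.

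\textbf{Step 1: the auxiliary differential.} Define
\[
	d := \begin{pmatrix} d_C & 0 \\ 0 & d_{M'} \end{pmatrix}
\]
on $M = C\oplus M'$.

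\textbf{Step 2: check that $d$ is a multifactorization differential.} We need $d^2 = W\cdot\id$. Expanding $d_M^2 = W\cdot\id$ blockwise gives
\[
	d_C^2 + \beta\gamma = W, \qquad d_{M'}^2 + \gamma\beta = W,
\]
together with the off-diagonal identities. Two things are then needed.
\begin{itemize}
	\item The hypothesis that $(C,d_C)$ retracts onto $0$ via a special 1-deformation retract already presupposes $d_C^2 = W$, i.e. that $(C,d_C)$ is a multifactorization. Hence $\beta\gamma = 0$.
	\item The same conclusion is needed for $M'$, i.e. $d_{M'}^2 = W$, equivalently $\gamma\beta = 0$. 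This does not follow formally from $\beta\gamma = 0$. It is also the one place where the statement might need an implicit hypothesis.
\end{itemize}
If $d_{M'}^2 \neq W$, I would avoid using $d_{M'}$ alone. Instead I would perturb from the differential $\begin{pmatrix} d_C & 0 \\ 0 & \ast\end{pmatrix}$, or note that the Perturbation Lemma only uses $\tilde d^2 = d^2 = W$ for the homotopy identities in \cite{crainic}. I expect this bookkeeping, i.e. making the unperturbed object genuinely a multifactorization, to be the main subtlety. The resulting formula should be the same either way.

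\textbf{Step 3: the unperturbed special 1-deformation retract.} On $(M,d)$, take
\[
	i = \begin{pmatrix}0\\1\end{pmatrix}, \qquad p = \begin{pmatrix}0 & 1\end{pmatrix}, \qquad K = \begin{pmatrix} k & 0 \\ 0 & 0\end{pmatrix}.
\]
Then $pi = 1$. Next, $dK + Kd = \mathrm{diag}(d_Ck + kd_C,\, 0) = \mathrm{diag}(-1, 0)$, which equals $ip - 1$ (up to the paper's sign convention for homotopies). The side conditions $Ki = 0$, $pK = 0$ and $K^2 = 0$ are immediate. Since $k$ is a 1-homotopy, $K$ is one as well, and $i$, $p$ are vertical 0-homomorphisms. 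So $(i,p,K)$ is a special 1-deformation retract from $(M,d)$ to $(M',d_{M'})$.

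\textbf{Step 4: apply the Perturbation Lemma.} Set $\tilde d = d_M$. Because $\beta_0 = \gamma_0 = 0$, the vertical parts agree, $\tilde d_0 = d_0$. The perturbation is
\[
	\tilde d - d = \begin{pmatrix}0&\beta\\ \gamma&0\end{pmatrix}, \qquad (\tilde d - d)K = \begin{pmatrix} 0 & 0\\ \gamma k & 0\end{pmatrix}.
\]
This squares to zero, so it is nilpotent. Lemma~\ref{lem:specialDefRetract} then yields a special 1-deformation retract from $(M,d_M)$ to $(M',\tilde d')$.

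\textbf{Step 5: identify the new differential.} We have
\[
	A = \sum_j \bigl((\tilde d - d)K\bigr)^j(\tilde d - d) = (\tilde d - d) + (\tilde d - d)K(\tilde d - d).
\]
Its $(M',M')$ entry is $\gamma k\beta$, so $\tilde d' = d_{M'} + pAi = d_{M'} + \gamma k\beta$, as claimed. The explicit data $(\iota,\rho,\kappa)$ are given by \eqref{eq:deformedsdrdata}:
\[
	\iota = \begin{pmatrix} k\beta \\ 1\end{pmatrix}, \qquad \rho = \begin{pmatrix}\gamma k & 1\end{pmatrix}, \qquad \kappa = \mathrm{diag}(k,0) + \text{(correction)},
\]
up to signs and the correction term from $KAK$. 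The filtration degree claims follow as in the proof of the lemma: $\beta$ and $\gamma$ have filtered degree $\geq 1$ and $k$ has filtered degree $\geq -1$.
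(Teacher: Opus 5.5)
Your strategy, treating the lemma as a special case of the Perturbation Lemma~\ref{lem:specialDefRetract}, is the one the paper intends; the paper only remarks that the lemma is such a special case and gives no further proof. However, Steps 2--4 do not go through as written. The problem you flag in Step 2 is the generic situation, not an edge case. From $d_M^2=W$ and $d_C^2=W$ you correctly get $\beta\gamma=0$. But $d_{M'}^2=W-\gamma\beta$, and $\gamma\beta$ is typically nonzero. It is the part of $d_M^2$ that runs through the cancelled summand, and in $d_M^2=W$ it is cancelled by paths avoiding $C$ rather than vanishing on its own. Already in ordinary Gaussian elimination of $A\to B_1\oplus D\to B_2\oplus E$ along an isomorphism $B_1\to B_2$, it is the path $A\to B_1\to E$. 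This is exactly the situation of the second cancellation in the R3 argument, where the removed vertex $100$ sits between $000$ and $110$.

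Consequently $(M',d_{M'})$ and $(M,\mathrm{diag}(d_C,d_{M'}))$ are not multifactorizations. Your $(i,p,K)$ is then not a special deformation retract in the paper's sense, and the Perturbation Lemma cannot be invoked. Your second remedy does not help, since $d^2=W$ is precisely what fails. No extra hypothesis is needed either: the lemma holds as stated.

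The missing step is to build the correction into the unperturbed differential. Set $d'=d_{M'}+\gamma k\beta$. You have the off-diagonal relations $d_{M'}\gamma=-\gamma d_C$ and $\beta d_{M'}=-d_C\beta$ from $d_M^2=W$, together with $\beta\gamma=0$ and $d_Ck+kd_C=-1$. Using these,
\[
	(d_{M'}+\gamma k\beta)^2=d_{M'}^2-\gamma(d_Ck+kd_C)\beta+\gamma k(\beta\gamma)k\beta=d_{M'}^2+\gamma\beta=W .
\]
Now $d=\mathrm{diag}(d_C,d')$ is a genuine multifactorization differential, and your $(i,p,K)$ is a special 1-deformation retract onto $(M',d')$.

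The perturbation is $d_M-d=\begin{pmatrix}0&\beta\\ \gamma&-\gamma k\beta\end{pmatrix}$. Its vertical part vanishes because $\gamma k\beta$ has filtered degree $\geq 1$. Moreover $((d_M-d)K)^2=0$, and $A=\begin{pmatrix}0&\beta\\ \gamma&0\end{pmatrix}$. Hence $pAi=0$, so the deformed differential is $d_{M'}+\gamma k\beta$. Formula \eqref{eq:deformedsdrdata} then gives
\[
	\iota=\begin{pmatrix}k\beta\\ 1\end{pmatrix},\qquad \rho=\begin{pmatrix}\gamma k&1\end{pmatrix},\qquad \kappa=\mathrm{diag}(k,0)
\]
exactly. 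There is no sign ambiguity with the convention $d\kappa+\kappa d=\iota\rho-1$, and since $KAK=0$ there is no correction term. These are the maps your Step 5 produced, which is why your formula came out right.

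Alternatively, you can bypass the Perturbation Lemma. Check $\rho\iota=1$, $\kappa\iota=\rho\kappa=\kappa^2=0$, $d_M\kappa+\kappa d_M=\iota\rho-1$ and the chain-map conditions for these three maps directly from the same four relations. The filtration claims follow because $k\beta$ and $\gamma k$ have filtered degree $\geq 0$.
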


Here, $k$ is called a 1-\emph{nullhomotopy}. We reserve the green color to indicate the arrow data of a special 1-deformation retract obtained by 1-contractible summand removal as in the lemma.

The following theorem establishes the invariance of $\mathcal M(-)$ under Reidemeister moves up to 1-homotopy equivalence. See \cite{polyak} for a proof that the list below is a generating set of oriented Reidemeister moves.

\begin{figure}
	\centering
	\[
		\begin{tikzcd}[column sep=2.5cm, row sep=1cm, ampersand replacement=\&]
			\scalebox{0.7}{\SingleEdge}
			\arrow[d, color=ForestGreen, shift right=1.5]
			\& \\
			\scalebox{0.7}{\SingleEdge[q=2]} \oplus \scalebox{0.7}{\SingleEdge}
			\arrow[u, color=ForestGreen, shift right=1.5]
			\arrow[d, color=violet, shift right=1.5, "{\begin{pmatrix} \theta& u\theta
			\end{pmatrix}}", swap]
			\arrow[r, "{\begin{pmatrix}1 & x_0\end{pmatrix}}", shift left=1, start anchor=east, end anchor=west]
			\&
			\scalebox{0.7}{\SingleEdge[q=2, h=1, hslash]}
			\arrow[d, color=violet, shift left=1.5]
			\arrow[l, color=ForestGreen, shift left=1]
			\\
			\scalebox{0.7}{\ThinEdgeAndCircle[q=1, hslash]}
			\arrow[r, "\partial+(u+x_0+G)\theta"]
			\arrow[u, color=violet, shift right=1.5]
			\arrow[loop right, distance=1.5em, in=-75, out=-105, color=violet]
			\&
			\scalebox{0.7}{\ThinEdgeAndCircle[dir1=1, q=2, h=1, hslash]}
			\arrow[u, color=violet, shift left=1.5, "{\ev_{u=x_0}\partial\theta}"]
			\arrow[loop right, distance=1.5em, in=-75, out=-105, color=violet]
		\end{tikzcd}
		\qquad
		\begin{tikzcd}[row sep=1cm, column sep=1.8cm]
			&
			\scalebox{0.7}{\SingleEdge}
			\arrow[d, color=ForestGreen, shift right=1.5]
			\\
			\scalebox{0.7}{\SingleEdge[ q=-2, h=-1, hslash]}
			\arrow[d, color=violet, shift right=1.5, "1", swap]
			\arrow[r, "{\begin{pmatrix}x_0+G \\ 1
			\end{pmatrix}}", shift left=1]
			&
			\scalebox{0.7}{\SingleEdge}\oplus \scalebox{0.7}{\SingleEdge[q=-2]}
			\arrow[d, color=violet, shift left=1.5, ]
			\arrow[u, color=ForestGreen, shift right=1.5]
			\arrow[l, color=ForestGreen, shift left=1]
			\\
			\scalebox{0.7}{\ThinEdgeAndCircle[dir1=1, q=-2, h=-1, hslash]}
			\arrow[r, "\partial+(u+x_0+G)\theta"]
			\arrow[u, color=violet, shift right=1.5]
			\arrow[loop right, distance=1.5em, in=-75, out=-105, color=violet]
			&
			\scalebox{0.7}{\ThinEdgeAndCircle[q=-1, hslash]}
			\arrow[u, color=violet, shift left=1.5, "{\begin{pmatrix} P^{u,0}_{(x_0-u)(u+x_0+G)}\partial \\
					P^{u,1}_{(x_0-u)(u+x_0+G)}\partial \end{pmatrix}}"]
			\arrow[loop right, distance=1.5em, in=-75, out=-105, color=violet]
		\end{tikzcd}
	\]
	\caption{Invariance under R1a (left) and R1b (right) moves.}
	\label{fig:R1}
\end{figure}

\begin{theorem}
	There are special 1-deformation retracts
	\begin{align*}
		\emph{(R1a)} &\sdr{\scalebox{0.9}{\SingleEdge}}{\ThinEdgeAndCircle[cr1, dir1=1]}{}{}{}
		&\emph{(R1b)}& \sdr{\scalebox{0.9}{\SingleEdge}}{\ThinEdgeAndCircle[cr1=-1, dir1=1]}{}{}{}
		\\
		\emph{(R2a)}&\sdr{\scalebox{0.9}{\Elementary[dir1=1]}}{\scalebox{0.8}{\Square[dir1=1, dir2=3, cr1, cr2=-1]}}{}{}{}
		& \emph{(R2b)} & \sdr{\scalebox{0.9}{\Elementary[dir1=3]}}{\scalebox{0.8}{\Square[dir1=3, dir2=1, cr1=1, cr2=-1]}}{}{}{}
	\end{align*}
	Moreover, there is a matrix multifactorization $M'''_{R3}$ which is a common special 1-deformation retract of both
	\[
		\scalebox{0.8}{\RLR[cr1, cr2, cr3, dir1=1, dir2=1, dir3=1]} \quad\mbox{and}\quad \scalebox{0.8}{\LRL[cr1, cr2, cr3, dir1=1, dir2=1, dir3=1]}.
	\]
	There are also analogous special 1-deformation retracts if all colorings are inverted.
\end{theorem}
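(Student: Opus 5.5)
We follow the strategy sketched before Lemma \ref{lem:SummandLemma}: for each move, start from the explicit multifactorization on the crossing side, deloop every resolution vertically, perturb, and then cancel $1$-contractible summands along isomorphism components of $d_1$. Concretely, $\mathcal M$ of the crossing diagram is a cube of resolutions $\bigoplus_s\mathcal M(\Gamma_s)$ with $d_0$ the Koszul differentials and $d_1$ the saddle maps $S_i=\partial_i\pm(x+y+G)\theta_i$, tensored with identities. For each resolution $\Gamma_s$ we pick a vertical special deformation retract onto simple diagrams, namely:
\begin{itemize}
\item R1: arc removal \eqref{eq:sdrthickloop} and delooping \eqref{sdr:simpleloopremoval};
\item R2: arc removal \eqref{eq:sdrthickthin} and delooping \eqref{sdr:thinthin}, or its transformed version in Example~\ref{ex:thinthintransformed};
\item R3: arc removal from Example~\ref{ex:R3arcremoval} and the composite slide/delooping of Example~\ref{ex:circleremovalreid3}.
\end{itemize}
These are $h$-homogeneous and thus $0$-homotopies. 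The direct sum over $s$ is a special $0$-deformation retract for the vertical differential $d_0$. Lemma \ref{lem:specialDefRetract}, applied with $\tilde d-d=d_1$, produces a special $0$-deformation retract onto a multifactorization with transferred differential $p\,d_1 i+p\,d_1 k\, d_1 i+\cdots$. Nilpotence of $d_1k$ is automatic, since $M$ is bounded in $h$ and $d_1k$ raises $h$-filtration by $0$ but lowers it in the cube direction. More precisely, $k$ is $h$-homogeneous of degree $0$, and $d_1$ strictly raises $h$, so $(d_1k)^j$ vanishes once $j$ exceeds the number of crossings.

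Next we compute the new $h$-degree-one components $p\,d_1 i$ explicitly, as in Figure~\ref{fig:R1}. For R1a, the composite $\ev_{u=x_0}\partial\theta\circ(\partial+(u+x_0+G)\theta)\circ(\theta\;\;u\theta)$ gives $(1\;\;x_0)$, which contains the identity component. For R1b the analogous computation with the $P^{u,l}$ projections gives $\binom{x_0+G}{1}$. In R2a, R2b and R3 we identify, among the delooped pieces, pairs of identical simple diagrams joined by a component of $p\,d_1 i$ equal to $\pm1$, up to a vertical isomorphism.

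We then invoke Lemma \ref{lem:SummandLemma}, first changing basis by an upper-triangular vertical isomorphism so that the off-diagonal $\beta_0,\gamma_0$ vanish. On the contractible part $C=[X\xrightarrow{1}X]$ we take the $1$-nullhomotopy $k$ to be the inverse arrow, of $h$-degree $-1$. This gives a special $1$-deformation retract onto the remaining summand, with differential corrected by $\gamma k\beta$. Finally, Lemma \ref{lem:concat} concatenates this with the $0$-retract from the previous step; since a $0$-retract is in particular a $1$-retract, the concatenation is a special $1$-deformation retract. For R1 and R2 the remainder is a single simple diagram with zero twisted differential, which we identify with the target via an arc-system and labelling isomorphism (Proposition~\ref{prop:arcinvariance}). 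The inverted-colouring statements follow by applying the variable sign changes of Remark~\ref{remarkalteration} throughout.

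The main obstacle is R3. There is no direct identification of both sides; instead we reduce each side to an explicit common multifactorization $M'''_{R3}$, since the cube has eight resolutions and the transferred differential includes higher terms $p\,d_1kd_1 i$ of $h$-degree $2$ and $q$-degree $3$, which are genuinely nonzero. The plan is as follows.
\begin{enumerate}
\item Deloop the unique resolution containing a loop using Example~\ref{ex:circleremovalreid3}, and remove superfluous arcs elsewhere.
\item Cancel the two isomorphism components of $d_1$ that pair the delooped summands with adjacent resolutions, leaving five simple summands.
\item Compute the surviving differential, including the $\gamma k\beta$ correction and the length-two terms, and show it is symmetric under the reflection exchanging the RLR and LRL pictures, after relabelling arcs and edges and applying slide isomorphisms (Lemma~\ref{lem:arrowslide}).
\end{enumerate}
The delicate point is checking that the $h^2$ components agree on both sides; if they agree only up to a vertical isomorphism, we absorb that isomorphism into $M'''_{R3}$.
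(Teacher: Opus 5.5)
Your proposal follows the paper's proof: arc removal or delooping at each resolution, transfer of the saddle differentials via the Perturbation Lemma, cancellation of $1$-contractible summands along the isomorphism components (e.g.\ $(1\;\,x_0)$ and $\binom{x_0+G}{1}$ for R1) using Lemma~\ref{lem:SummandLemma}, and for R3 transport of the reduced $M'''_{R3}$ to the other side followed by absorbing a vertical isomorphism. The one point to correct is the R3 symmetry: the paper uses a $180^\circ$ rotation combined with coloring inversion and reversal of all strand orientations, acting by $x_0\leftrightarrow -x_3$, $x_1\leftrightarrow -x_4$, $x_2\leftrightarrow -x_5$, $u\leftrightarrow -u$, $G\leftrightarrow -G$, and then identifies $M'''_{R3}\cong N'''_{R3}$ by $s_2s_3\tau_{2,3}$ at each vertex (a bare planar reflection would reverse the crossing signs), while your basis change to make $\beta_0,\gamma_0$ vanish is unnecessary because the transferred vertical differential is already block diagonal.
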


\begin{proof}
	We build explicit special 1-deformation retracts for each move, using the methods for diagram simplification in Subsection~\ref{subsec:simplify}.\footnote{All calculations in this proof can be verified by hand. For the reader's convenience, a Python notebook automating some of our constructions and carrying out these computations is available at \url{https://github.com/tme93/bar-natan-multifactorizations}.} Reidemeister 1 deformation retracts are illustrated in Figure \ref{fig:R1}. Let us first focus on the R1a move, which corresponds to the left side of the figure. The multifactorization initially associated to the tangle
	\[
		\scalebox{0.8}{\ThinEdgeAndCircle[cr1, dir1=1, extra={\node at (0.45,0) {\tiny $u$};}]}
	\]
	is at the bottom row of Figure \ref{fig:R1}. With the application of the Perturbation Lemma \ref{lem:specialDefRetract} in mind, let us denote this bottom row by $(C,D)$ and the middle row by $(C',D')$. The violet arrows represent the special deformation retracts of matrix factorizations appearing in Examples \ref{ex:R1arcremoval} and \ref{ex:R1delooping}. Together, these violet
	arrows define a special 0-deformation retract between the vertical factorizations $\underline C$ and $\underline C'$. In this case, the perturbation $D-d$ is the horizontal part $D_1$ consisting of just a saddle map. By Formula \eqref{eq:deformedsdrdata}, the only non-vertical component in $(C',D')$ is calculated as the composite of the other three labeled arrows.

	Now, in the middle row $(C',D')$ we have a 1-contractible summand
	\[
		\left[\begin{tikzcd}
			\scalebox{0.6}{\SingleEdge[q=2]}
			\arrow[r,"1"] &
			\scalebox{0.6}{\SingleEdge[q=2, h=1, hslash]}
		\end{tikzcd}\right]_,
	\]
	which we cancel according to Lemma \ref{lem:SummandLemma}. The R1b move is completely analogous.\\

	\begin{figure}[t!]
		\centering
		\[
			\begin{tikzcd}[row sep=-0.4cm, column sep=4cm, ampersand replacement=\&]
				\&
				\scalebox{0.7}{\Elementary[dir1=1]}
				\\[4em]\&
				\scalebox{0.7}{\Elementary[dir1=1]}
				\arrow[dr, "-\partial_1-(x_0+x_2+G)\theta_2", pos=0.3]
				\arrow[ddd, bend left=40, color=violet, shift right=1.5, dash pattern=on 30pt off 38pt, "(-\partial_1+\partial_2+(x_0-x_1)\theta_1)\theta_2", swap, pos=0.1]
				\arrow[leftarrow, ddd, bend left=40, color=violet, shift left=1.5, dash pattern=on 32pt off 36pt, "\ev_{u=x_0+x_2-x_1}\partial_2\theta_2", pos=0.9]
				\arrow[u, color=ForestGreen, shift right=1.5]
				\&
				\\
				\scalebox{0.7}{\Elementary[q=-1,h=-1]}
				\arrow[dr, "-\begin{pmatrix}
					x_2+G \\ 1
				\end{pmatrix}", swap, pos=0.7, , shift right=1]
				\arrow[ur, "\partial_1+(x_0+x_2+G)\theta_1", pos=0.7]
				\arrow[ddd, color=violet, shift left=1.5, "(\partial_2+(u+x_2+G)\theta_1)\theta_2", pos=0.6 ]
				\&
				\&
				\scalebox{0.7}{\Elementary[q=1,h=1, extra={\node at (0,0.1) {\tiny $\alpha_2$};}]}
				\arrow[ddd, color=violet, shift left=1.5]
				\arrow[dl, shift right=1.5, color=ForestGreen]
				\\
				\&
				\scalebox{0.7}{\Elementary[q=1, dir1=2, hslash]}\oplus \scalebox{0.7}{\Elementary[q=-1, dir1=2, hslash]}
				\arrow[ur, crossing over, "{-\begin{pmatrix} 1 & x_0 \end{pmatrix}{(\partial_1\theta_1+\theta_2\partial_1)}}", swap, pos=0.2, shift right=1]
				\arrow[ul, shift right=1, color=ForestGreen]
				\&
				\\[2em]
				\&
				\scalebox{0.7}{\Square[dir1=1,dir2=1]}
				\arrow[dr, "-\partial_2-(u+x_2+G)\theta_2", pos=0.3]
				\&
				\\
				\scalebox{0.7}{\Square[dir2=1,q=-1,h=-1, lab1={\tiny $\alpha_1$}, lab2={\tiny $\alpha_2$}]}
				\arrow[ur, "\partial_1+(u+x_1+G)\theta_1", pos=0.7]
				\arrow[dr,"-\partial_2-(u+x_2+G)\theta_2", swap, pos=0.7]
				\arrow[uuu, color=violet, shift left=1.5]
				\&\&
				\scalebox{0.7}{\Square[dir1=1,q=1,h=1]}
				\arrow[uuu, color=violet, shift left=1.5, "\ev_{u=x_0}\partial_1\theta_1"]
				\\
				\&
				\scalebox{0.7}{\Square}
				\arrow[ur, "\partial_1+(u+x_1+G)\theta_1", swap, pos=0.3]
				\&
				\arrow[ddd, from=4-2, to=7-2, bend right=40, crossing over, color=violet, shift right=1.5, "{\begin{pmatrix}
						1 & u
				\end{pmatrix}}(-\theta_1+\theta_2)", swap, pos=0.68]
				\arrow[uuu, from=7-2, to=4-2, bend left=40, crossing over, color=violet, shift right=1.5, "{\begin{pmatrix}P^{u,0}_{(u+x_1-x_0-x_2)(u+x_2+G)} \\ P^{u,1}_{(u+x_1-x_0-x_2)(u+x_2+G)}\end{pmatrix}\partial_2}", swap, pos=0]
			\end{tikzcd}
		\]
		\caption{Invariance under R2a move.}
		\label{fig:r2}
	\end{figure}

	The argument for Reidemeister 2 deformation retracts follows along the same lines. In Figure \ref{fig:r2}, we represent the special 1-deformation retract corresponding to the R2a move. The cube of resolutions of the tangle diagram
	\[
		\scalebox{0.7}{\Square[dir1=1, dir2=3, cr1, cr2=-1, extra={\node at (0,0.4) {\tiny $u$};}]}
	\]

	appears at the bottom layer. There is, again, a special deformation retract simplifying the decorated diagram at each vertex, depicted in violet. A couple of these deformation retracts were explicitly computed in Examples \ref{ex:R2arcremoval} and \ref{ex:thinthin}, while the others are computed analogously. Following the same perturbation argument that we used for R1a, we obtain all the arrows in the middle layer from the given vertical special deformation retract data via the Perturbation Lemma \ref{lem:specialDefRetract}. Observe that, in principle, Formula \eqref{eq:deformedsdrdata} could give an arrow of length (i.e. $h$-degree) 2, but it vanishes by direct calculation. We then use Lemma \ref{lem:SummandLemma} twice to remove first the 1-contractible piece corresponding to the isomorphism $-(\partial_1\theta_1+\theta_2\partial_1)$ with target at the rightmost vertex, and then the 1-contractible piece corresponding to the isomorphism $-1$ with source the leftmost vertex.

	The R2b move corresponds to an orientation reversal on both strands of R2a. To obtain the desired special 1-deformation retract in that case, we simply reverse all the arcs in Figure \ref{fig:r2}.\\

	\begin{figure}[h!]
		\centering
		\begin{tikzcd}[row sep=0.2cm, column sep=2.5cm]
			&
			\scalebox{0.7}{\RLR[dir1=1, dir2=2, q=4, h=1, hslash, extra={\node at (0,-1.3) {\small 100};}]}
			\arrow[r, "-\tau_{1,3}S_2"]
			&
			\scalebox{0.7}{\RLR[dir2=3, dir1=1, q=5, h=2, hslash, lab1={\tiny $\alpha_3$}, lab2={\tiny $\alpha_2$}, lab3={\tiny $\alpha_1$}, extra={\node at (0,-1.3) {\small 110};}]}
			\arrow[dr, "\tau_{2,3}S_3\tau_{1,3}"]
			&
			\\
			\scalebox{0.7}{\RLR[q=3,dir2=2, hslash, extra={\node at (0,-1.3) {\small 000};}]}
			\arrow[ur, "S_1"]
			\arrow[r, "-S_2"]
			\arrow[dr, "S_3", swap]
			&
			\scalebox{0.7}{\RLR[dir2=3, q=4, h=1, hslash, extra={\node at (0,-1.3) {\small 010};}]}
			\arrow[ur, "\tau_{1,3}S_1", swap, pos=0]
			\arrow[dr,"S_3", pos=0]
			&
			\scalebox{0.7}{\RLR[dir1=1, dir2=2, emp2, dir3=1, q=5, h=2, hslash, extra={\draw[dotted, dash pattern=on 1pt off 1pt, {<[scale=0.8]}-]
					(-0.65,-0.2)
					.. controls (-0.45,-0.3) and (-0.3,-0.7)
					.. (-0.1,-0.8);
					\node at (0,-1.3) {\small 101};}]

			}
			\arrow[r, "-\tau_{2,3}S_2\psi^{-1}_{2,3}", pos=0.4]
			\arrow[ul, leftarrow, crossing over, "\psi_{2,3} S_3", pos=0.9, swap]
			&
			\scalebox{0.7}{\RLR[dir1=1,dir2=3,dir3=1, q=6, h=3, hslash, lab1={\tiny $\alpha_1$}, lab2={\tiny $\alpha_3$}, lab3={\tiny $\alpha_2$}, extra={\node at (0,-1.3) {\small 111};}]}
			\\
			&
			\scalebox{0.7}{\RLR[dir3=1, dir2=2, q=4, h=1, hslash, extra={\node at (0,-1.3) {\small 001};}]}
			\arrow[ur, crossing over, "\psi_{2,3} S_1", pos=1]
			\arrow[r, "-S_2", swap]
			&
			\scalebox{0.7}{\RLR[dir2=3,dir3=1, q=5, h=2, hslash, extra={\node at (0,-1.3) {\small 011};}]}
			\arrow[ur, "\tau_{2,3}S_1", swap]
			&
		\end{tikzcd}
		\begin{tikzcd}[row sep=0.2cm, column sep=2.5cm]
			&
			\scalebox{0.7}{\RL[dir1=1,dir2=2, q=4, h=1, hslash]}
			\arrow[r, "-\tau_{1,3}S_2"]
			&
			\scalebox{0.7}{\RL[dir2=3, dir1=1, q=5, h=2, hslash, lab1={\tiny $\alpha_3$}, lab2={\tiny $\alpha_2$}]}
			\arrow[dr]
			&
			\\
			\scalebox{0.7}{\LR[dir1=2,q=3, hslash, lab1={\tiny $\alpha_2$}, lab2={\tiny $\alpha_3$}]}
			\arrow[ur, bend left=20, start anchor=60,
			"
			", pos=0.8]
			\arrow[r, "-S_2"]
			\arrow[dr, bend right=20, start anchor=-60,
			"S_3"]
			\arrow[urr, bend left=40 , start anchor=90, end anchor=130, "\theta_2\partial_3\theta_3", pos=0.3]
			\arrow[drr, bend right=40 , start anchor=-90, end anchor=-130, "0", pos=0.3, dotted, swap]
			&
			\scalebox{0.7}{\LR[dir1=3, q=4, h=1, hslash, lab1={\tiny $\alpha_2$}, lab2={\tiny $\alpha_3$}]}
			\arrow[ur, "g_1", pos=0.8]
			\arrow[dr, "S_3", pos=0.8, swap]
			&
			\begin{matrix}
				\scalebox{0.5}{\RL[dir1=3, q=6, h=2]}
				\\
				\oplus
				\\
				\scalebox{0.5}{\RL[dir1=3, q=4, h=2]}
			\end{matrix}
			\arrow[r, shift left=1, "\tiny \begin{pmatrix}
				f \quad *
			\end{pmatrix}"]
			\arrow[r, leftarrow, shift right=1, color=ForestGreen]
			\arrow[ul, leftarrow, crossing over, start anchor=140, , "\tiny \begin{pmatrix}
				* \\ -1
			\end{pmatrix}", swap, pos=0.9, shift left=2]
			\arrow[ul, crossing over, start anchor=140, color=ForestGreen, shift left=4]
			\arrow[ll,leftarrow, bend right=25, "\tiny \begin{pmatrix} * \\ \theta_1\theta_2\end{pmatrix}", crossing over, pos=0.7, swap, start anchor=180]
			&
			\scalebox{0.7}{\RLR[dir1=1,dir2=1, dir3=1, emp2, q=6, h=3, hslash]}
			\\
			&
			\scalebox{0.7}{\LR[dir1=2, dir2=1, q=4, h=1, hslash, lab1={\tiny $\alpha_2$}, lab2={\tiny $\alpha_3$}]}
			\arrow[ur, crossing over, "\tiny \begin{pmatrix} * \\ g_2 \end{pmatrix}", pos=0.7, end anchor=-155]
			\arrow[r, "-S_2", swap]
			&
			\scalebox{0.7}{\LR[dir1=3,dir2=1, q=5, h=2, hslash, lab1={\tiny $\alpha_2$}, lab2={\tiny $\alpha_3$}]}
			\arrow[ur, swap]
			&
		\end{tikzcd}
		\caption{Intermediate simplification steps in the proof of Reidemeister 3: $M_{R3}'$ (top) and $M_{R3}''$ (bottom). In $M_{R3}''$ we have $f:=1+(\partial_2 +(x_1-x_2)\theta_2)\theta_1$, $g_2:=1+(x_4-x_3)\theta_1\theta_2+(\theta_1+\theta_2)\partial_3$ and $g_1:=\partial_3(1-\partial_2\theta_3)+(x_1-x_0)\theta_2\partial_3\theta_3-(x_0+x_2+x_4-x_1+G)\theta_3$.}
		\label{fig:r3}
	\end{figure}

	For the Reidemeister 3 move we want to prove an equivalence
	\begin{equation}\label{eq:R3}
		\scalebox{0.7}{\RLR[cr1, cr2, cr3, dir1=1, dir2=1, dir3=1, extra={\node at (0.65,0) {\tiny $u$};}]} 
		\simeq 
		\scalebox{0.7}{\LRL[cr1, cr2, cr3, dir1=1, dir2=1, dir3=1, extra={\node at (0.23,0) {\tiny 2}; \node at (0.05,0.52) {\tiny 3};\node at (0.05,-0.52) {\tiny 1}; \node at (-0.65,0) {\tiny $u$};}]}_.
	\end{equation}
	Observe that, by Proposition \ref{prop:arcinvariance}, no generality is lost after the relabeling on the right-hand side. Let $M_{R3}$ be the multifactorization associated to the left-hand side diagram. We simplify in three steps.

	\emph{Step 1.} Consider the multifactorization $M_{R3}'$ at the top of Figure \ref{fig:r3}, which is obtained from $M_{R3}$ by relabeling arcs at the vertices $110$ and $111$ of the cube, while also sliding an arc at $101$. We have corresponding explicit matrix factorization isomorphisms $\tau_{1,3}$, $\tau_{2,3}$ and $\psi_{2,3}$ for the transformations at each of these three vertices, afforded by Lemmas \ref{lem:labelperm} and \ref{lem:arrowslide}. The components of $(d_{M'_{R3}})_1$ are defined by compositions of the original saddles $S_i$ with the corresponding isomorphisms and their inverses as needed. There is a vertical isomorphism $M_{R3}\cong M'_{R3}$ given by the chosen isomorphisms at the vertices $110$, $111$ and $101$, and identities at all other vertices. These particular transformations are chosen to help obtain a conveniently symmetric form in the fully simplified expression $M_{R3}'''$ below.

	\emph{Step 2.} Next, we simplify all diagrams by removing an arc at each resolution other than the one sitting at the vertex $101$, in which we do a delooping. Explicit formulas for the special deformation retracts at $000$ and $101$ can be found in Examples \ref{ex:R3arcremoval} and \ref{ex:circleremovalreid3}, respectively. At all other vertices, the special deformation retract is computed just as in $000$. This defines a vertical special deformation retract $\underline{M''_{R3}}$ of $\underline{M'_{R3}}$. Through the Perturbation Lemma \ref{lem:specialDefRetract} and the formulas in its proof, we obtain the special 0-deformation retract $M''_{R3}$ of $M'_{R3}$. All the explicit information we need from $M''_{R3}$ appears at the bottom of Figure~\ref{fig:r3}. There are possible length 2 and 3 arrows ending at $111$, but they are not depicted as they disappear without contribution in the next step. The same goes for the depicted but unlabeled arrows and arrow components.

	\begin{figure}[h!]
		\centering
		\begin{tikzcd}[column sep=2.5cm, row sep=0.2cm]
			&
			\scalebox{0.7}{\LR[dir1=3, q=4, h=1, hslash, lab1={\tiny $\alpha_2$}, lab2={\tiny $\alpha_3$}]}
			\arrow[rdd, "\partial_3-(x_2+x_4+G)\theta_3", pos=0.79, bend left=5]
			\arrow[r, "\partial_3(1-\partial_2\theta_3)+(x_1-x_0)\theta_2\partial_3\theta_3-(x_0+x_2+x_4-x_1+G)\theta_3"]
			&[4.8cm]
			\scalebox{0.7}{\RL[dir2=3, dir1=1, q=5, h=2, hslash, lab1={\tiny $\alpha_3$}, lab2={\tiny $\alpha_2$}]}
			\\
			\scalebox{0.7}{\LR[dir1=2, hslash, q=3, lab1={\tiny $\alpha_2$}, lab2={\tiny $\alpha_3$}]}
			\arrow[ur, "-\partial_2-(x_0+x_2+x_4-x_3+G)\theta_2", pos=0.8, bend left=20]
			\arrow[dr, "\partial_3-(x_2+x_4+G)\theta_3", swap, pos=0.7, bend right=20]
			\arrow[urr, "\partial_2\theta_2\theta_3+\theta_2\partial_3\theta_3", pos=0.2, crossing over, swap]
			&
			&
			\\
			&
			\scalebox{0.7}{\LR[dir1=2, dir2=1, q=4, h=1, hslash, lab1={\tiny $\alpha_2$}, lab2={\tiny $\alpha_3$}]}
			\arrow[uur, crossing over, shift left=0.5ex, bend right=5, "\partial_2(\partial_3\theta_2-1)+(x_4-x_3)\theta_3\partial_2\theta_2-(x_0+x_2+G)\theta_2", pos=0.3]
			\arrow[r, "-\partial_2-(x_0+x_2+x_4-x_3+G)\theta_2", swap]
			&
			\scalebox{0.7}{\LR[dir2=1,dir1=3, hslash, q=5, h=2, lab1={\tiny $\alpha_2$}, lab2={\tiny $\alpha_3$}]}
		\end{tikzcd}\vspace{0.5em}
		\begin{tikzcd}[column sep=2.5cm, row sep=0.2cm]
			&
			\scalebox{0.7}{\LR[dir1=0, dir2=3, q=4, h=1, hslash, lab1={\tiny $\alpha_3$}, lab2={\tiny $\alpha_2$}]}
			\arrow[rdd, "\partial_3+(x_0+x_2+x_4-x_3+G)\theta_3", pos=0.79, bend left=5]
			\arrow[r, "\partial_3(1-\partial_2\theta_3)+(x_3-x_4)\theta_2\partial_3\theta_3+(x_0+x_2+G)\theta_3"]
			&[4.8cm]
			\scalebox{0.7}{\RL[dir2=1, dir1=3, q=5, h=2, hslash, lab1={\tiny $\alpha_2$}, lab2={\tiny $\alpha_3$}]}
			\\
			\scalebox{0.7}{\LR[dir1=0,dir2=2, hslash, q=3, lab1={\tiny $\alpha_3$}, lab2={\tiny $\alpha_2$}]}
			\arrow[ur, "-\partial_2+(x_2+x_4+G)\theta_2", pos=0.8, bend left=20]
			\arrow[dr, "\partial_3+(x_0+x_2+x_4-x_3+G)\theta_3", swap, pos=0.7, bend right=20]
			\arrow[urr, "-\partial_2\theta_2\theta_3-\theta_2\partial_3\theta_3", pos=0.2, crossing over, swap]
			&
			&
			\\
			&
			\scalebox{0.7}{\LR[dir1=1, dir2=2, q=4, h=1, hslash, lab1={\tiny $\alpha_3$}, lab2={\tiny $\alpha_2$}]}
			\arrow[uur, crossing over, shift left=0.5ex, bend right=5, "\partial_2(\partial_3\theta_2-1)+(x_0-x_1)\theta_3\partial_2\theta_2+(x_0+x_2+x_4-x_1+G)\theta_2", pos=0.3]
			\arrow[r, "-\partial_2+(x_2+x_4+G)\theta_2", swap]
			&
			\scalebox{0.7}{\LR[dir2=3, dir1=1, hslash, q=5, h=2, lab1={\tiny $\alpha_3$}, lab2={\tiny $\alpha_2$}]}
		\end{tikzcd}
		\caption{Simplified matrix factorizations $M_{R3}'''$ (top) and $N_{R3}'''$ (bottom) for the two sides of Reidemeister 3. }
		\label{fig:r3simp}
	\end{figure}

	\emph{Step 3.} Finally, we remove 1-contractible pieces using 1-nullhomotopies indicated by the green arrows in Figure~\ref{fig:r3}. The component $f=1+(\partial_2 +(x_1-x_2)\theta_2)\theta_1$ of the arrow $101\to 111$ in $M''_{R3}$ has inverse $f^{-1}:=1-(\partial_2 +(x_1-x_2)\theta_2)\theta_1$. Applying Lemma \ref{lem:SummandLemma}, we obtain a special 1-deformation retract of $M''_{R3}$ in which the first summand at 101 and the entire 111 piece are removed, while all the differentials between the remaining pieces remain untouched. The same lemma can be applied again to the $-1$ component of $100\to 101$, this time creating new arrows along $000\to 110$ and $001\to 110$. The multifactorization $M'''_{R3}$ obtained after these two cancellations appears at the top of Figure \ref{fig:r3simp}.

	Now, the two decorated tangle diagrams in \eqref{eq:R3} are related by a $180^\circ$ rotation, followed by a coloring inversion and a swap of all strand orientations. Let us denote by $N_{R3}$ the multifactorization associated to the right-hand side in \eqref{eq:R3}. The vertical part $\underline N_{R3}$ is obtained from $\underline M_{R3}$ by doing $180^\circ$ rotation, coloring inversion and reversal of arc directions at each resolved diagram. On the terms defining length 1 and 2 arrows, the total transformation has the following effect:
	\[
		x_0\leftrightarrow -x_3, \quad x_1\leftrightarrow -x_4, \quad x_2\leftrightarrow -x_5, \quad u\leftrightarrow -u, \quad G\leftrightarrow -G.
	\]
	These transformations can be carried throughout the special deformation retracts, using the algebraic recipe, to transform arrows $d_k$ of length $k>0$. Applying this total transformation to $M_{R3}'''$, one obtains the multifactorization $N_{R3}'''$ at the bottom of Figure \ref{fig:r3simp}. These two simplified multifactorizations are related by exchanging labels everywhere and then flipping the sign of all differentials. More precisely, $M'''_{R3}\cong N_{R3}'''$ by a vertical isomorphism which has a component of the form $s_2s_3\tau_{2,3}$ between each decorated diagram in $M'''_{R3}$ and the corresponding decorated diagram in $N'''_{R3}$ with the same underlying crossingless matching.

\end{proof}

\bibliographystyle{alpha}
\bibliography{biblio}

\end{document}